\newcommand{\Spec}{\text{\it Spec}}
\newcommand{\Aut}{\text{\it Aut}}
\newcommand{\Mor}{\text{\it Mor}}
\newcommand{\Sch}{\text{\it Sch}}
\newcommand{\Q}{{\mathbb Q}}
\newcommand{\Z}{{\mathbb Z}}
\newcommand{\F}{{\mathbb F}}
\renewcommand{\O}{{\mathcal{O}}}
\newcommand{\ab}{\text{\it ab}}
\newcommand{\p}{{\mathfrak p}}
\newcommand{\sm}{{\,\smallsetminus\,}}
\newcommand{\Gal}{\text{\it Gal}}
\newcommand{\freeproductmed}{\mathop{\lower.2mm\hbox{\emas \symbol{3}}}\limits}
\newcommand{\lang}{\longrightarrow}
\newcommand{\tr}{\mathit{tr}}
\newcommand{\Val}{\mathrm{Val}}
\newcommand{\m}{\mathfrak{m}}
\newcommand{\Pe}{\mathbb{P}}
\newcommand{\A}{\mathbb{A}}
\newcommand{\sep}{\mathit{sep}}
\newcommand{\red}{\mathit{red}}
\newtheoremstyle{alex}
  {}
  {}
  {\it}
  {}
  {\bf}
  {.}
  {.5em}
  {}
\newtheoremstyle{alexdef}
  {}
  {}
  {\rm }
  {}
  {\bf}
  {.}
  {.5em}
  {}
\theoremstyle{alex}
\newtheorem{theorem}{Theorem}[section]
\newtheorem{lemma}[theorem]{Lemma}
\newtheorem{proposition}[theorem]{Proposition}
\theoremstyle{alexdef}
\newtheorem*{definition}{Definition}
\newtheorem{remark}[theorem]{Remark}
\title{\bf On different notions of tameness in arithmetic geometry}
\author{Moritz Kerz and Alexander Schmidt}
\date{March 17, 2009}
\begin{document}
\abovedisplayskip=3pt plus 1pt minus 1pt
\abovedisplayshortskip=-4pt plus 1pt
\belowdisplayskip=2pt plus 1pt minus 1pt
\belowdisplayshortskip=2pt plus 1pt minus 1pt
\maketitle

\begin{quote}
{\em Abstract:}  The notion of a tamely ramified covering is canonical only for curves. Several notions of tameness for coverings of higher dimensional schemes have been used in the literature. We show that all these definitions are essentially equivalent. Furthermore, we prove finiteness theorems for the tame fundamental groups of arithmetic schemes.
\end{quote}

\bigskip

\section{Introduction}

Let $\bar C$ be a be a proper, connected and regular curve (i.e.\ $\dim C=1$) of finite type over $\Spec(\Z)$ and let $C\subset \bar C$ be a nonempty open subscheme. Every point $x\in \bar C \sm C$ defines a discrete rank one valuation $v_x$ on the function field $k(C)$.  One says that an \'{e}tale covering $C'\to C$ is {\em tamely ramified along $\bar C \sm C$} if for every $x\in \bar C \sm C$ the valuation $v_x$ is tamely ramified in $k(C')|k(C)$.
Since the proper, regular curve $\bar C$ is determined by $C$, we can say that the \'{e}tale covering $C'\to C$ is {\em tame}  if it is tamely ramified along $\bar C \sm C$.

\smallskip
Following \cite{sga1,G-M}, one possible extension of this definition to higher dimensions is the following. We denote by $\Sch(\Z)$ the category of separated schemes of finite type over $\Spec(\Z)$.  Let $X\in \Sch(\Z)$ be a regular scheme together with an open embedding into a regular, proper scheme $\bar X \in \Sch(\Z)$ such that $\bar X \sm X$ is a normal crossing divisor (NCD) on $\bar X$. Then an \'{e}tale covering $Y \to X$ is called {\em tamely ramified along $\bar X \sm X$} if the discrete valuations associated with the generic points of $\bar X \sm X$ are tamely ramified in $k(Y)|k(X)$. However, there might exist many or (at our present knowledge about resolution of singularities) even no regular compactifications $\bar X$ of $X$ such that $\bar X\sm X$ is a NCD. Furthermore, there is no obvious functoriality for the tame fundamental group.

\smallskip
So the question for a good notion of tameness of an \'{e}tale covering $Y\to X$ of regular schemes in $\Sch(\Z)$ naturally occurs.  In this paper we compare several possible definitions:

\medskip
\begin{compactitem}
\item[\bf curve-tameness:] for every morphism $C\to X$ with $C\in \Sch(\Z)$ a regular curve, the base change $Y\times_X C \to C$ is tame.
\item[{\bf divisor-tameness:}] for every normal compactification $\bar X$ of $X$ and every point $x\in \bar X \sm X$ with $\text{codim}_{\bar X} x=1$, the discrete rank one valuation $v_x$ on $k(X)$ associated with $x$ is tamely ramified in $k(Y)|k(X)$.
\item[\bf chain-tameness:] There exists a compactification $\bar X$ of $X$ such that every discrete valuation of rank $d=\dim X$ on $k(X)$ which dominates a Parshin chain on $\bar X$  is tamely ramified in $k(Y)|k(X)$.
\item[\bf valuation-tameness:] every nonarchimedean valuation of $k(X)$ with center outside $X$ is tamely ramified in $k(Y)|k(X)$.
\end{compactitem}

\smallskip\noindent
The notion of curve-tameness has been considered in~\cite{W-tame} and  the notion of chain-tameness in~\cite{S-tame}.
Curve-tameness is the maximal definition of tameness which is stable under base change and extends the given definition for curves. Valuation tameness is obviously stronger than divisor-tameness and chain-tameness.  Our first result is the following:

\begin{theorem}[see Theorem~\ref{triangle-theo}]\label{main}
The notions of curve-tameness, divisor-tameness and chain-tameness   are equivalent. If\/ every intermediate field between $k(X)$ and the Galois closure of $k(Y)$ over $k(X)$ admits a regular proper model, then they are equivalent to valuation-tameness.

Moreover, if there exists a regular compactification $\bar X$ such that $\bar X \sm X$ is a NCD, then all these notions of tameness are equivalent to the notion of tame ramification along $\bar X \sm X$ of\/ \cite{sga1,G-M}.
\end{theorem}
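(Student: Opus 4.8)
The plan is to reduce the whole statement to a handful of facts about tame ramification of valuations and then run a dévissage on $d=\dim X$. Write $K=k(X)$ and let $L$ be the Galois closure of $k(Y)|K$; since for each of the four notions $Y\to X$ is tame iff the covering attached to $L$ is, I may assume $k(Y)=L$ is Galois over $K$. I would record three facts. (a) \emph{Composition:} for a composite valuation $v=v_1\circ v_2$, $v$ is tamely ramified in $L|K$ iff $v_1$ is tamely ramified in $L|K$ and $v_2$ is tamely ramified in the corresponding residue extension; in particular the ``first piece'' of a tame composite is tame. (b) \emph{Abhyankar/base change:} if $v$ is tamely ramified in $L|K$ and $K'|K$ is any field extension with a valuation $v'$ prolonging $v$, then $v'$ is tamely ramified in $LK'|K'$. (c) \emph{Restriction:} if $\bar X$ is a normal compactification, $D\subset\bar X\sm X$ an irreducible divisor, and $v_D$ tamely ramified in $L|K$, then (after a blow-up of $\bar X$ with centre in $\bar X\sm X$) the covering is \'etale-locally near $D$ a Kummer covering, so the induced reduced, normalized covering of $D$ is again divisor-tame over $D$; this is the geometric form of Abhyankar's lemma \cite{G-M}, and it is the technical heart of everything below.

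\emph{Divisor-tame $\Leftrightarrow$ chain-tame.} I would induct on $d$, the case $d=1$ being the definition for curves. Fix a compactification, a Parshin chain $x_0\supset x_1\supset\cdots\supset x_d$ with $\mathrm{codim}\,x_i=i$, and a rank-$d$ valuation $v$ dominating it; then $v=v_{x_1}\circ v'$, where $v_{x_1}$ is the divisorial valuation of $D_1:=\overline{\{x_1\}}$ and $v'$ is a rank-$(d-1)$ valuation of $k(D_1)$ dominating the Parshin chain $x_1\supset\cdots\supset x_d$ on $D_1$. If $Y\to X$ is divisor-tame then $v_{x_1}$ is tame by definition, so by (c) the induced covering of $D_1$ is divisor-tame, hence chain-tame by induction, so $v'$ is tame, and by (a) so is $v$. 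Conversely, given a divisorial $v_D$ with centre outside $X$, complete $D$ to a flag of successively codimension-one subvarieties to get a Parshin chain whose dominating rank-$d$ valuation has $v_D$ as first piece; chain-tameness makes it tame and (a) makes $v_D$ tame.

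\emph{Curve-tame $\Leftrightarrow$ divisor-tame.} For ``divisor $\Rightarrow$ curve'', let $C\to X$ be a regular curve; replacing $C$ by the normalization of its image we may assume $C\to X$ factors through a closed curve $Z\subset X$ with $k(Z)\hookrightarrow k(C)$ finite, so $Y\times_X C=(Y\times_X Z)\times_Z C$ with $Y\times_X Z\to Z$ finite \'etale. For $\infty\in\bar C\sm C$ its valuation restricts to a valuation $w$ of $k(Z)$ which is trivial or divisorial on a normal compactification of $Z$, with centre $\bar x\notin Z$. If $\bar x\in X$ the covering is \'etale there and $w$ is unramified; otherwise $\bar x\in\bar X\sm X$ and I would lift $w$ to a composite valuation of $K$ via a local flag of divisors through $\bar x$ in a normal $\bar X$ (those contained in $\bar X\sm X$ giving tame divisorial valuations of $K$, the rest being unramified), so by (c) and the previous dévissage all pieces are tame; hence $w$ is tame in $k(Y\times_X Z)|k(Z)$ and (b) makes $\infty$ tame in $k(Y\times_X C)|k(C)$. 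For ``curve $\Rightarrow$ divisor'', if $v_D$ is wildly ramified for some $D\subset\bar X\sm X$, pick a general closed point $\delta\in D$ (with $\bar X$ and $D$ regular at $\delta$) and the curve $\bar C\subset\bar X$ cut out near $\delta$ by a regular system of parameters of $D$; then $\bar C$ meets $D$ transversally at $\delta$, so the base change over $\bar C\cap X$ has the same ramification index at $\delta$ as $Y\to X$ has along $D$, hence is wild — contradicting curve-tameness. (Manufacturing $\bar C$ over $\Spec\Z$ needs an arithmetic Bertini statement, which I would obtain by spreading out.)

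\emph{Valuation-tameness and the NCD case.} Assuming $K$ and all intermediate fields of $L|K$ admit regular proper models: given a valuation $v$ of $K$ with centre outside $X$, write it as a composite of rank-one valuations and ``resolve'' it by a sequence of blow-ups of regular proper models following $v$ (legitimate by the hypothesis, applied also to the residue fields arising); this yields a cofinal family of divisorial valuations of $K$ and of those residue fields, each tame by divisor-tameness together with (c), and since the inertia group of $v$ is detected at a finite stage of this family, $v$ is tame. Finally, if $\bar X$ is regular with $\bar X\sm X=D_1\cup\cdots\cup D_r$ an NCD, then tameness along $D_1,\dots,D_r$ is by definition the \cite{sga1}-notion and is implied by divisor-tameness; conversely any normal compactification is dominated by a common blow-up which can be kept with NCD boundary, and Abhyankar's lemma identifies the valuation of each exceptional divisor over the NCD with a monomial combination $\sum a_i v_{D_i}$, whose ramification index in $L|K$ involves only the $e(v_{D_i})$ and so is prime to the residue characteristic once these are — hence tameness along $D_1,\dots,D_r$ gives divisor-tameness. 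The main obstacle throughout is fact (c): that tameness of one divisorial valuation forces the induced covering of that divisor to be divisor-tame. This Kummer-local rigidity of tame coverings is what drives the dimension dévissage, the lifting argument for curves, and the ``resolution of a valuation'' used under the model hypothesis; the only other delicate point is the arithmetic Bertini argument in ``curve $\Rightarrow$ divisor''.
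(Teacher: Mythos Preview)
Your approach has a genuine gap, and it sits exactly where you suspect: the step ``curve $\Rightarrow$ divisor'' and your fact~(c).

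For ``curve $\Rightarrow$ divisor'' you propose to take a general closed point $\delta\in D$ and a transversal curve $\bar C$ through $\delta$, asserting that the base change then has the same ramification index at $\delta$ as $Y\to X$ has along $D$. This is not true as stated, and it is precisely the content of the paper's Key Lemma~\ref{key-lemma} to manufacture such a curve. Two things go wrong with the naive argument. First, wildness along $D$ need not mean $p\mid e$: one can have $e(v_D)=1$ with an inseparable residue extension of degree $p$ (cases~2a/2b in the proof of Lemma~\ref{key-lemma}), and then ``same ramification index'' tells you nothing. Second, even in the Eisenstein case $e=p$, the restriction of the Eisenstein polynomial $f=T^p+a_{p-1}T^{p-1}+\cdots+a_0$ to a transversal curve need not satisfy the numerical conditions of Lemma~\ref{ram-crit}\,(ii); one has to first resolve the divisor of coefficients of $f$ (Proposition~\ref{desing}) and then choose $C=V(t^N-\pi)$ for a carefully selected $N$. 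No ``arithmetic Bertini'' statement will furnish this: the curve is constructed, not found generically.

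Your fact~(c) is likewise not available in the generality you need. Abhyankar's lemma gives the Kummer local structure only when the boundary is a NCD near $D$, and in the absence of resolution of singularities you cannot arrange this by blow-ups centred in $\bar X\sm X$. Without the Kummer description there is no reason the induced (normalized) covering of $D$ should be divisor-tame: divisor-tameness of $Y\to X$ controls rank-one valuations of $K$, whereas a divisor $E\subset D$ gives a rank-two valuation $v_D\circ v_E$ of $K$, and tameness at $v_D$ alone does not force tameness of $v_E$ in the residue extension. Since (c) is the engine of your whole d\'evissage (divisor $\Leftrightarrow$ chain, divisor $\Rightarrow$ curve, and the valuation-tameness argument), the proposal does not stand without it.

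The paper sidesteps both issues. It never restricts the covering to a boundary divisor; instead it proves (i)$\Rightarrow$(ii) directly from the Key Lemma (reducing to a cyclic degree-$p$ subcover and to dimension two), obtains (ii)$\Rightarrow$(iii) from purely valuation-theoretic purity statements (Lemmas~\ref{val-purity} and~\ref{wild-purity}), and closes the cycle with an elementary Parshin-chain construction for (v)$\Rightarrow$(i). The equivalence with valuation-tameness under the regular-model hypothesis again uses purity of the branch locus on a regular proper model rather than any analogue of~(c), and the NCD case is handled by Proposition~\ref{Groth-Wies} together with numerical tameness. If you want to salvage your outline, the honest fix is to replace the transversal-curve step by the Key Lemma and to drop~(c) in favour of the valuation-theoretic lemmas of Section~\ref{valsec}.
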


Suppose that $\pi: Y \to X$ is Galois with group $G$ and that we are given a fixed normal compactification $\bar X$ of $X$. Denoting the normalization of $\bar X$ in $k(Y)$ by $\bar Y$, there are the following additional notions of tameness:

\medskip
\begin{compactitem}
\item[{\bf numerical tameness:}] for every $y\in \bar Y \sm Y$ the inertia group $T_y(\bar Y|\bar X)\subset G$ is of order prime to the characteristic of the residue field $k(y)$.
\item[\bf cohomological tameness:] for every $x\in \bar X \sm X$ the semi-local ring $\O_{\bar Y, \pi^{-1}(x)}$ is a cohomologically trivial $G$-module.
\end{compactitem}

\smallskip\noindent
The notion of numerical tameness has been considered in \cite{Ab}, \cite{CE}, \cite{S-tame} and \cite{W-tame}.
Cohomological tameness has been considered in \cite{CEPT} (in the more general context of group scheme actions).

\medskip
Our second result is :

\begin{theorem}[see Theorems~\ref{ntimpliesvaltame} and \ref{cohotame}] \label{main2}
Numerical tameness and cohomological tameness are equivalent and imply valuation tameness. If $\bar X \sm X$ is a NCD or $G$ is nilpotent, then all definitions are equivalent.
\end{theorem}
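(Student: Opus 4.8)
The statement bundles three assertions — (i)~numerical tameness $\Leftrightarrow$ cohomological tameness, (ii)~either of these $\Rightarrow$ valuation tameness, (iii)~under the extra hypothesis the six notions all coincide — and the plan is to prove them in this order, the last one using Theorem~\ref{main}.

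For (i) everything is local on $\bar X$ at a point $x\in\bar X\sm X$: one must decide when the semilocal ring $A:=\O_{\bar Y,\pi^{-1}(x)}$, with its $G$-action and $A^G=\O_{\bar X,x}$, is a cohomologically trivial $G$-module. The plan is to localize and complete so as to reduce to the complete local rings $\widehat\O_{\bar Y,y}$ with the action of the decomposition group $D_y$, $y\in\pi^{-1}(x)$ (Tate cohomology commutes with flat localization and cohomological triviality descends faithfully flatly), then to split off the inertia group $T_y\trianglelefteq D_y$: the quotient $D_y/T_y$ acts on the unramified part $\widehat\O_{\bar Y,y}^{\,T_y}$, which has separable residue extension and is cohomologically trivial by a normal-basis/Nakayama argument. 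One is reduced to $\widehat\O_{\bar Y,y}$ as a $T_y$-module; since $\widehat H^i(H,\widehat\O_{\bar Y,y})$ is a module over the local ring $\O_{\bar X,x}$ killed by $|H|$, it vanishes for every $H\le T_y$ as soon as $|T_y|$ is invertible in $\O_{\bar X,x}$, i.e.\ prime to $\operatorname{char}k(y)$; conversely, if $p=\operatorname{char}k(y)$ divides $|T_y|$ one restricts to a subgroup of order $p$ and the classical Artin--Schreier-type wild-ramification computation exhibits a nonzero Tate group. This is essentially the local analysis of \cite{CEPT}.

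For (ii): given a nonarchimedean valuation $v$ of $k(X)$ with center outside $X$, the valuative criterion of properness applied to the proper scheme $\bar X$ produces a center $x\in\bar X\sm X$; extend $v$ to a valuation $w$ of $k(Y)$, with center $y$ on the proper scheme $\bar Y$, and since $Y$ is the full preimage of $X$ in $\bar Y$ ($Y\to X$ \'etale, hence $Y$ normal, hence the normalization of $X$ in $k(Y)$) the point $y$ lies in $\bar Y\sm Y$. The key point is that the valuation-theoretic inertia group $I(w|v)$ is contained in the scheme-theoretic inertia group $T_y(\bar Y|\bar X)$: any $\sigma$ fixing $\O_w$ fixes its center $y$ and acts trivially on $k(w)\supseteq k(y)$. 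Numerical tameness at $y$ then forces $|I(w|v)|$, a divisor of $|T_y|$, to be prime to $\operatorname{char}k(w)=\operatorname{char}k(y)$, which is exactly tame ramification of $v$ in $k(Y)|k(X)$; as $v$ was arbitrary, $Y\to X$ is valuation-tame.

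For (iii): by Theorem~\ref{main} curve-, divisor- and chain-tameness always agree, and by (ii) numerical tameness implies all of them, so only the implication ``divisor-tame $\Rightarrow$ numerically tame'' remains. If $\bar X\sm X$ is a NCD this is Abhyankar's lemma in the form of \cite{G-M,sga1}: \'etale-locally $\bar Y\to\bar X$ is obtained by adjoining roots of exponents prime to $p$ of the local equations of the components of $\bar X\sm X$ through a point, so every $T_y$ is a product of cyclic groups of order prime to $p$. If $G$ is nilpotent, write $G$ as the product of its Sylow subgroups and use that all six notions pass to subcovers $Y'\to X$ that are Galois over $X$ (ramification in a subextension is dominated by ramification in the extension) to reduce first to $G$ a $p$-group and then, via the Frattini argument (a nontrivial $p$-group has a $\Z/p$-quotient not containing a prescribed nontrivial inertia group), to $G=\Z/p$. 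For an Artin--Schreier cover with wild inertia at some $y\in\bar Y\sm Y$ one must then produce a normal compactification $\bar X'$ of $X$ and a point $x'\in\bar X'\sm X$ of codimension one whose valuation is wildly ramified in $k(Y)|k(X)$: if $\operatorname{codim}_{\bar X}x=1$ there is nothing to do, and otherwise one blows up the closure of $x$ and checks that the nontrivial $\Z/p$-action on $\widehat\O_{\bar Y,y}$ stays nontrivial on the local ring of the generic point of the exceptional divisor upstairs, iterating until the wild locus reaches codimension one. I expect the main obstacle to be exactly this last point — descending wildness to codimension one in the nilpotent case — together with the bookkeeping for imperfect residue fields in (i), where one needs that inseparability of $k(y)|k(x)$ already makes $p$ divide $|T_y|$, so that ``numerical tameness'' really is tameness; the remainder is formal once Theorem~\ref{main} is available.
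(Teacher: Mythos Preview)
Parts (ii) and the NCD half of (iii) match the paper. For (i) the paper argues more directly via the trace: numerical tameness is equivalent to surjectivity of $\tr_{B|A}$ (Claim~1 in the proof of Theorem~\ref{cohotame}), and from this one gets $\hat H^0(H,B)=0$ and $H^1(H,B)=0$ for every $H\le G$ by an explicit Hilbert-90 style computation, which suffices by \cite{NSW}, Proposition~1.8.4. Your d\'evissage through $T_y\trianglelefteq D_y$ is not complete as written: cohomological triviality of $\widehat\O_{\bar Y,y}$ over $T_y$ together with cohomological triviality of $\widehat\O_{\bar Y,y}^{T_y}$ over $D_y/T_y$ does not formally yield cohomological triviality over every subgroup $H\le D_y$, since the relevant fixed ring is then $\widehat\O_{\bar Y,y}^{H\cap T_y}$, which may be strictly larger than $\widehat\O_{\bar Y,y}^{T_y}$.

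The genuine gap is the nilpotent half of (iii). Your Frattini reduction is false as stated: take $G=\Z/p^2$ and $T_y=p\Z/p^2$; the unique $\Z/p$-quotient of $G$ has kernel exactly $T_y$, so no quotient of $G$ sees the wild inertia. The paper does not pass to a quotient. It writes the $p$-group covering as a tower $Y=X_n\to\cdots\to X_0=X$ of degree-$p$ Galois steps, takes the first index $a$ for which $\bar X_{a+1}\to\bar X_a$ ramifies over the preimage of the bad closed point $x_0$, observes that $\bar X_a\to\bar X$ is still \'etale near $x_0$ and hence $\bar X_a$ is regular there, and applies purity on $\bar X_a$ to get a ramified prime divisor through a closed point $x_a$ of residue characteristic~$p$. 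Finally---and this replaces your blow-up idea entirely---it invokes the Key Lemma~\ref{key-lemma} at $x_a$ to produce a curve on which the degree-$p$ step ramifies over $x_a$; since $\mathrm{char}\,k(x_a)=p$, this is wild ramification on a curve, contradicting curve-tameness of $Y\to X$. The Key Lemma is precisely the device that converts ``ramified along some divisor through a characteristic-$p$ point'' into ``wildly ramified on a curve'', without ever having to track the branch locus through blow-ups.
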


We show Theorems~\ref{main} and \ref{main2} in the more general context of regular schemes, separated and of finite type over an integral, pure-dimensional and excellent base scheme $S$. In the appendix we give illustrating examples, in particular, an example of an \'etale covering of a regular scheme which is numerically tame with respect to one regular compactification (hence curve-, divisor- and chain-tame) but not numerically tame with respect to another regular compactification.

\medskip
In the case that the function field of $S$ is absolutely finitely generated, we show the following tame variant of a finiteness theorem  of Katz and Lang \cite{K-L}.

\begin{theorem}[see Theorem~\ref{Katz-Lang}] Let
$S$ be an integral, pure-dimensional and excellent base scheme whose function field is absolutely finitely generated.  Let $f:Y\to X$ be a smooth, surjective morphism of connected regular schemes which are separated and of finite type over $S$. Assume that the generic
fibre of $f$ is geometrically connected and that one of the following conditions {\rm (i)} and {\rm (ii)} is satisfied:

\medskip

\begin{compactitem}
\item[\rm (i)]  $X$ has a regular compactification $\bar X$ over $S$ such that the boundary $\bar X \sm X$ is a normal crossing divisor.\smallskip
\item[\rm (ii)] The generic fibre of $f$ has a rational point.
\end{compactitem}

\medskip\noindent
Then the group
\[
\ker_S^t (Y/X):=\ker \big( \pi_1^{t,\ab}(Y/S) \to  \pi_1^{t,\ab}(X/S) \big)
\]
is finite.
\end{theorem}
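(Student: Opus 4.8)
The plan is to adapt the method of Katz and Lang to the tame setting: one exhibits $\ker_S^t(Y/X)$ as a quotient of a module of Galois coinvariants of the abelianised tame fundamental group of the geometric generic fibre, and then shows that this module is finite via the Weil conjectures. Write $\eta$ for the generic point of $X$, fix a geometric point $\bar\eta$ above it, and set $Y_{\bar\eta}=Y\times_X\bar\eta$; by the hypotheses on $f$ this is a smooth, connected scheme of finite type over the algebraically closed field $k(\bar\eta)$, and $\pi_1(\eta)=\Gal\big(k(X)^{\sep}|k(X)\big)$ acts on $\pi_1^{t,\ab}(Y_{\bar\eta})$ through its (outer) action on $\pi_1(Y_{\bar\eta})$.

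The first --- and main --- step is the tame, abelianised homotopy exact sequence
$$\big(\pi_1^{t,\ab}(Y_{\bar\eta})\big)_{\pi_1(\eta)}\;\lang\;\pi_1^{t,\ab}(Y/S)\;\lang\;\pi_1^{t,\ab}(X/S)\;\lang\;0,$$
which shows $\ker_S^t(Y/X)$ to be a quotient of $\big(\pi_1^{t,\ab}(Y_{\bar\eta})\big)_{\pi_1(\eta)}$. Surjectivity on the right holds because $f$ is smooth, surjective and has a geometrically connected generic fibre. Exactness in the middle rests on the base-change stability of (curve-)tameness --- see Theorem~\ref{main}: restricting a tame covering of $Y$ to $Y_{\bar\eta}$ produces a tame covering, and conversely every tame covering of $Y_{\bar\eta}$ is already defined over $Y\times_X X'$ for some connected \'etale $X'\to X$. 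This is the point at which hypotheses (i) and (ii) enter, in order to control the non-generic fibres and the boundary: in case (ii) a $k(X)$-rational point of the generic fibre yields a rational section of $f$ and hence the required splitting $\pi_1(Y_\eta)=\pi_1(Y_{\bar\eta})\rtimes\pi_1(\eta)$, while in case (i) the normal crossings compactification permits the same conclusion via the explicit description of tame coverings together with a spreading-out of the boundary in a family over $X$.

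For the second step one must prove $\big(\pi_1^{t,\ab}(Y_{\bar\eta})\big)_{\pi_1(\eta)}$ finite. The tame fundamental group of a separated scheme of finite type over an algebraically closed field is topologically finitely generated --- by Orgogozo's theorem for the prime-to-$p$ part, and for the $p$-primary part because it concerns coverings that are \'etale in codimension one and is thus controlled by $\pi_1^{\ab}$ of a proper model --- so $M:=\pi_1^{t,\ab}(Y_{\bar\eta})$ is a finitely generated $\widehat\Z$-module. Since $k(X)$ is absolutely finitely generated, $Y_\eta\to\Spec k(X)$ extends to a smooth morphism $f_U\colon\mathcal Y\to U$ with $U$ integral, regular and of finite type over $\Spec\Z$, and the $\pi_1(\eta)$-action on $M$ factors through $\pi_1(U)$. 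Fix a prime $\ell$; after possibly shrinking $U$, the $\ell$-part $M_\ell$ underlies a lisse $\Z_\ell$-sheaf on $U$ which --- by Poincar\'e duality on the fibres, with $d=\dim Y_{\bar\eta}$ --- is a subquotient of $R^{2d-1}f_{U,!}\Z_\ell(d)$. Pick a closed point $u\in U$ of residue characteristic $\neq\ell$ (such a point exists, as $U$ is of finite type over $\Z$ and dominates $\Spec\Z$ when $k(X)$ has characteristic $0$). By Deligne's weight bounds every eigenvalue of the geometric Frobenius $\Frob_u$ on $M_\ell\otimes\Q_\ell$ has complex absolute value $<1$; hence $\Frob_u-1$ is invertible there, so $\big(M_\ell\big)_{\langle\Frob_u\rangle}$ is finite, and so is its quotient $\big(M_\ell\big)_{\pi_1(\eta)}$. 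Since $M$ is finitely generated over $\widehat\Z$ and, by purity, the characteristic polynomial of $\Frob_u$ is independent of $\ell$, these coinvariants moreover vanish for all but finitely many $\ell$; in positive characteristic the $p$-primary part is treated identically, using the $p$-adic Weil conjectures and the comparison of $p$-adic \'etale with crystalline cohomology. Hence $M_{\pi_1(\eta)}$, and therefore $\ker_S^t(Y/X)$, is finite.

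The hard part is the first step: setting up the tame homotopy exact sequence above with the correct bookkeeping of tameness under base change, and in particular verifying --- using only hypothesis (i) or (ii) --- that the coinvariants of the tame fundamental group of the geometric generic fibre really surject onto $\ker_S^t(Y/X)$, with the non-generic fibres and the boundary divisors contributing nothing uncontrolled. The finite generation of the tame fundamental group of a variety over an algebraically closed field, in particular of its $p$-part in characteristic $p$, is a second, more technical ingredient; by contrast the Weil-conjecture argument in the last step is routine.
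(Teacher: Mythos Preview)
Your overall architecture matches the paper's: exhibit $\ker_S^t(Y/X)$ as (essentially) a quotient of the $\pi_1(\eta)$-coinvariants of $\pi_1^{t,\ab}(Y_{\bar\eta})$, then prove that module finite. But the execution differs, and there is one real gap.

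\textbf{Step 1.} The paper does not establish your sequence $(\pi_1^{t,\ab}(Y_{\bar\eta}))_{\pi_1(\eta)}\to\pi_1^{t,\ab}(Y/S)\to\pi_1^{t,\ab}(X/S)\to 0$ as exact. It first passes to the generic point, where the analogous sequence
\[
\pi_1^{t,\ab}(Y_{\bar\eta}/\bar\eta)_{G(K^\sep|K)}\to\pi_1^{t,\ab}(Y_\eta/\eta)\to G(K^\sep|K)^\ab\to 0
\]
is honest, and then compares with the $S$-relative groups via a diagram whose middle cohomology $H$ measures the failure of your sequence. In case~(ii) the rational section forces $H=0$, as you say. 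In case~(i), however, the paper only shows $H$ is \emph{finite}: it is generated by the inertia groups, in the relevant abelian extension $K_2|K$, of the divisorial valuations at the components of $\bar X\sm X$, and finiteness of each such group is a genuine ramification lemma (Lemma~\ref{aufess}): if $w$ is a geometric discrete extension to $L=k(Y)$ of the valuation $v$ on $K$, and a Galois extension $k'|k$ of the completion becomes unramified after base change to $L$, then $\#T_v(k'|k)\mid w(\pi)\cdot[(kv)_{Kw}:kv]_i$. Your phrase ``spreading-out of the boundary in a family over $X$'' does not address this; nothing in your sketch explains why an abelian extension of $K$ whose pullback to $L$ is $S$-tame must itself be almost $S$-tame. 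That is the gap.

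\textbf{Step 2.} Here you genuinely diverge. The paper does not run the Weil-conjecture argument in arbitrary fibre dimension: after reducing to $S=X=\Spec K$ it uses Artin's elementary fibrations (and further base change) to reach the case where $Y\to X$ is a relative curve with section, then quotes Katz--Lang directly for both the prime-to-$p$ and $p$-parts. Your direct route---spread out over a finite-type $U/\Z$, identify the $\ell$-part with a piece of $R^{2d-1}f_{U,!}\Z_\ell(d)$, invoke Deligne's weights at a closed point---is essentially Katz--Lang's own argument lifted to higher fibre dimension. It works, but it requires finite generation of $\pi_1^{t,\ab}(Y_{\bar\eta})$; your treatment of the $p$-part (``controlled by $\pi_1^{\ab}$ of a proper model'') presupposes a smooth compactification of $Y_{\bar\eta}$, i.e.\ resolution over $k(\bar\eta)$, and your invocation of $p$-adic Weil is heavier machinery than Katz--Lang's Albanese argument. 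The paper's reduction to curves sidesteps both issues.
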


\medskip
The authors thank A.~Holschbach and F.~Pop for helpful discussions on the subject, and T.~Chinburg for his proposal to consider cohomological tameness.

\section{The Key Lemma}

We start by recalling the following desingularization results. See \cite{Lip} for a proof of (i) and \cite{saf}, Lecture 3, Theorem on p.\,38 and Remark~2 on p.\,43, for (ii). We denote the set of regular points of a scheme $X$ by $X^{reg}$.

\begin{proposition}\label{desing} Let $X$ be a two-dimensional,  normal, connected and excellent scheme. Then the following hold.

\smallskip
\begin{compactitem}
\item[\rm (i)] After a finite number of blow-ups in closed points followed by normalizations, we obtain a proper morphism $p: X'\to X$ with $X'$ regular and such that $p^{-1}(X^{reg}) \to X^{reg}$ is an isomorphism of schemes. \smallskip
\item[\rm (ii)] Assume that $X$ is regular and let $Z$ be a proper closed subset on $X$. After a finite number of blow-ups in closed points, we obtain a proper morphism $p: X'\to X$  such that $p^{-1}(Z)$ is a strict normal crossing divisor on $X'$.
\end{compactitem}
\end{proposition}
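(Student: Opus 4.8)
The plan is to deduce both assertions from the classical resolution theory for excellent two-dimensional schemes, so the proof is essentially a matter of invoking the right theorems and carrying out a couple of elementary reductions.

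For (i): the essential input is Lipman's theorem that a reduced excellent scheme of dimension two can be resolved by alternately blowing up the singular locus and normalizing. Concretely, I would set $X_0=X$ and define $X_{n+1}$ to be the normalization of the blow-up of $X_n$ in its (finite, since $X_n$ is excellent and normal) set of singular closed points; the point of \cite{Lip} is that this sequence becomes stationary after finitely many steps, at which point $X':=X_n$ is regular and $p\colon X'\to X$ is the desired proper morphism. It then remains to check the assertion about the regular locus. Here I would use that the blow-up of a scheme in a closed subscheme is an isomorphism over the complement of that subscheme, and that normalization is an isomorphism over the normal locus; since each center we blow up consists of singular points, hence lies in $X_n\sm X_n^{reg}$, and since by induction $X_n\to X$ is already an isomorphism over $X^{reg}$ — in particular $X_n$ is regular, a fortiori normal, over the image of $X^{reg}$ — one concludes that $p^{-1}(X^{reg})\to X^{reg}$ is an isomorphism.

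For (ii): one may replace $Z$ by its reduced structure. Write $Z=D\cup F$, where $D$ is the union of the one-dimensional irreducible components of $Z$ (a reduced effective divisor on the regular scheme $X$, possibly empty) and $F$ is the finite set of closed points of $Z$ not lying on $D$. Blowing up the points of $F$ — which is harmless, as they are disjoint from $D$ — replaces them by regular, pairwise disjoint exceptional divisors, so we are reduced to the case $Z=D$, i.e.\ to embedded resolution of a reduced curve on a regular excellent surface. For this I would run the standard descending induction: as long as the current total transform $D_n$ of $D$ fails to be a strict normal crossing divisor at some closed point $x$ — because $D_n$ is not regular at $x$, or at least three components pass through $x$, or two components through $x$ meet non-transversally — one blows up such a point $x$. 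One attaches to the pair $(D_n,x)$ a suitable invariant, built from the multiplicity of $D_n$ at $x$ together with bookkeeping data recording the tangency orders of the local branches, and shows that it strictly decreases under this blow-up; finiteness of the process then follows. This is precisely the content of \cite{saf}, Lecture~3.

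The main obstacle, as always in resolution of singularities, is the termination of these algorithms, and the genuine difficulty here is that the base is an arbitrary excellent scheme, so one has neither characteristic zero nor perfect residue fields at one's disposal. This is exactly why the argument rests on the treatments of Lipman and Shafarevich, which are tailored to the excellent two-dimensional situation; granting those, everything else above is formal.
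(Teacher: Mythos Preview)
Your proposal is correct and follows exactly the paper's approach: the paper does not give a proof at all but simply refers to \cite{Lip} for (i) and to \cite{saf}, Lecture~3, for (ii). Your write-up supplies the standard formal reductions (isomorphism over $X^{reg}$, the decomposition $Z=D\cup F$) that the paper leaves entirely to the reader, and these are all sound.
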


Let $X$ be a  normal, noetherian scheme and let $X'\subset X$ be a dense open subscheme. Assume we are given an an \'{e}tale covering $Y'\to X'$.

\begin{definition} Let $x\in X\sm X'$ be a point. We say that $Y'\to X'$ is {\em unramified along $x$} if it extends to an \'{e}tale covering of some open subscheme $U\subset X$ which contains $X'$ and $x$. Otherwise we say that $Y'\to X'$ {\em ramifies along $x$}.
\end{definition}

\begin{remark}
Let $Y$ be the normalization of $X$ in $k(Y')$. Then the branch locus of $Y\to X$ consists of all points $x\in X\sm X'$ such that $Y'\to X'$ ramifies along $x$.
\end{remark}

\begin{remark}\label{tamealongremark}
If $\mathrm{codim}_X \{x\} =1$,  then $Y'\to X'$  ramifies along $x$ if and only if the discrete valuation of $k(X')$ associated with $x$ ramifies in $k(Y')$. In this case we can speak about {\em tame} and {\em wild} ramification along $x$ by referring to the associated valuation.
\end{remark}

The Key Lemma for our investigations is the following Lemma~\ref{key-lemma}.
An integral, one-dimensional scheme will be called a curve.
We denote the normalization of a curve $C$ in its function field $k(C)$ by $\tilde{C}$.

\begin{lemma}[Key Lemma]\label{key-lemma} Let $A$ be a local, normal and excellent ring and let
$X'\subset X=\Spec(A)$ be a nonempty open subscheme. Let $Y'\to X'$ be an \'{e}tale Galois covering of prime degree $p$.  Assume that $X\sm X'$ contains an irreducible component $D$ of codimension one in $X$ such that $Y'\to X'$ is ramified along the generic point of\/ $D$. Then there exists a curve $C$ on $X$ with $C':=C \cap X'\ne\varnothing$ such that the base change $Y'\times_{X'} \tilde C' \to \tilde C'$ is ramified along a point of\/ $\tilde C \sm \tilde C'$.

\end{lemma}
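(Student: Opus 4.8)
The plan is to argue by induction on $d := \dim A$. The cases $d = 1$ and $d = 2$ are handled directly; for $d \ge 3$ the situation is reduced to dimension $d-1$. If $d = 1$, then $A$ is a normal local domain of dimension one, hence a discrete valuation ring, $D$ is its closed point, and we may take $C = X$: then $\tilde C = X$, $C' = X'$, and $Y' \to X'$ is ramified along the closed point, which lies in $\tilde C \sm \tilde C'$. In general, recall that $Y' \to X'$, being a Galois covering of prime degree $p$, is connected (a Galois covering of prime degree is either connected or completely split, and the latter is excluded by the ramification hypothesis); it therefore corresponds to a degree-$p$ field extension $L | k(X)$ which is totally ramified, with $e = p$, at the discrete valuation $v_D$ attached to $\eta_D$.

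Consider first $d = 2$. By Proposition~\ref{desing}(i) there is a proper birational morphism $X_1 \to X$ with $X_1$ regular, and, applying Proposition~\ref{desing}(ii) to $X_1$ and the closed set $Z_1 := X_1 \sm (X_1 \times_X X')$, we may assume in addition that $Z_1$ is a strict normal crossing divisor on $X_1$. Since blowing up closed points and normalizing does not affect the local ring $A_{\p} = \O_{X,\eta_D}$ at the codimension-one point $\eta_D$, the pull-back of $Y'$ to $X_1' := X_1 \times_X X'$ is still ramified along the strict transform $D'$ of $D$, which is one of the regular components of $Z_1$. Pick any closed point $z \in D'$. Locally at $z$ we have a two-dimensional regular local ring $R = \O_{X_1,z}$ with $D' = V(t)$ for a regular parameter $t$, with $Z_1$ equal to $V(ts)$ (for a second regular parameter $s$) or to $V(t)$, and with $Y'$ restricting to a degree-$p$ étale covering of $\Spec R[\tfrac{1}{ts}]$ (resp.\ of $\Spec R[\tfrac{1}{t}]$) that is totally ramified along $V(t)$. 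Now let $C$ be the curve $V(s - t^k) \subset \Spec R$ for a suitable integer $k \ge 1$ (any $k$ works if $Z_1 = V(t)$ locally): it is regular, transverse to $V(t)$, not contained in any exceptional fibre of $X_1 \to X$, and meets $X_1'$ in its generic point. The essential point is that the base change of $Y'$ along $C$ is ramified at $z$. In the tamely ramified case this follows from the structure theory of tame coverings (Abhyankar's lemma): the covering then has the form $T^p = t^a s^b u$ with $u \in R^\times$ and $p \nmid a$, so its restriction to $C$ is $T^p = t^{a+kb}\,\bar u$ with $\bar u$ a unit, and $p \nmid a + kb$ for a suitable $k$. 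In the wildly ramified case one uses instead that the different of the normalization of $\Spec R$ in $L$ has strictly positive multiplicity along the preimage of $V(t)$ and that this is inherited by the normalization of the transverse slice $C$, so that the base-changed covering has nontrivial different over $z$. Finally $C$, equipped with the composite morphism $C \hookrightarrow \Spec R \to X_1 \to X$, is the required curve on $X$: its normalization is $C$ itself, the point $z$ lies over $D \subset X \sm X'$, and $C \cap X' \neq \varnothing$.

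Now suppose $d \ge 3$. Cutting $X = \Spec A$ by a sufficiently general hypersurface through the closed point and invoking Bertini-type irreducibility and connectedness theorems over excellent local rings — applied both to $X$ and to $Y'$ — one obtains an integral closed subscheme $W \subset X$ of dimension $d - 1$ which meets $X'$, whose intersection with $D$ is irreducible of codimension one in $W$, and such that the pull-back of $Y'$ to $W \cap X'$ remains connected. Passing to the normalization of $W$ and localizing at a point above the closed point of $X$ which lies on $W \cap D$, we obtain a normal local excellent ring $A_1$ of dimension $d - 1$, a dense open subscheme, and a connected étale Galois covering of prime degree $p$. This covering is ramified along the codimension-one subscheme $D_1$ defined by $W \cap D$: indeed, for a general choice of the hypersurface the generic point of $W \cap D$ is a regular point of $D$ lying on no other component of $X \sm X'$ and on no singular point of $X$, so locally there the situation takes place in a two-dimensional regular local ring and one concludes exactly by the tame/wild dichotomy of the previous paragraph. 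By the induction hypothesis there is a curve on $\Spec A_1$ with the stated ramification property, and composing with $\Spec A_1 \to X$ gives the desired curve on $X$.

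The main obstacle is the essential point used twice above: that restricting a degree-$p$ covering which is totally ramified along a regular divisor to a sufficiently general transverse subscheme preserves ramification. In the tame case this is elementary Kummer theory, but in the wild case it rests on a careful analysis of the behaviour of the different (or the conductor) under passage to a general hyperplane section — one must ensure that such a slice avoids the loci where forming the different fails to commute with base change. A secondary, more routine, technical point is the use of Bertini-type irreducibility, connectedness and normality theorems in the generality of excellent (not necessarily finite-type) local rings.
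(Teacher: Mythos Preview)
Your proposal has a genuine gap at the heart of the argument: the wild-ramification step in dimension~$2$. First, the claim that the degree-$p$ Galois extension $L|k(X)$ is ``totally ramified, with $e=p$, at $v_D$'' is not correct. When the residue characteristic equals~$p$, one may well have $e=1$ with a purely inseparable residue extension of degree~$p$; the inertia group is still all of $\Z/p$, so the covering is ramified in the sense used here. (This is exactly the paper's Case~2.) Second, and more seriously, the assertion that the different ``is inherited by the normalization of the transverse slice~$C$'' is precisely the statement to be proved, not a known fact one can invoke. The different does not commute with base change in general: to compute the different of $Y'\times_X C$ over $C$ one must first normalize the fibre product, and there is no formula relating this to the restriction of the different of $Y'$ over $X$. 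The paper handles this by an explicit construction: it writes down the minimal polynomial of a suitable generator, resolves the divisor of its coefficients so that they become monomial in two parameters $(t,\pi)$, and then chooses the curve $C=V(t^N-\pi)$ with $N$ large and satisfying congruence conditions so that the auxiliary ramification criterion (Lemma~\ref{ram-crit}) applies to the restricted polynomial. Three separate subcases ($e=p$; $e=1$ with ramified residue DVR; $e=1$ with unramified residue DVR) each require their own choice of generator and their own verification. Your sketch contains none of this.

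A secondary issue is the inductive step for $d\ge 3$. Invoking ``Bertini-type irreducibility and connectedness theorems over excellent local rings'' is not routine: such statements are delicate already over fields of positive characteristic and are not available in the generality you need. The paper sidesteps this entirely by a different reduction: it localizes $A$ at a one-dimensional point $x_1\in D$ (so the localization has dimension $\dim A-1$), applies induction there to get a curve $C_1$, takes the closure of $C_1$ in $X$ and normalizes to obtain a two-dimensional scheme $X_2$, and then applies the two-dimensional case to $X_2$. No general-position argument is needed.
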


In the proof of the Key Lemma we use a reduction argument to dimension two which is due to Wiesend~\cite[Proof of Thm.~2]{W-tame}. Wiesend's arguments
for the two-dimensional case are however incomplete.
O.~Gabber has obtained a similar result using a slightly different technique.

In the proof of our Key Lemma we need the following ramification criterion.

\begin{lemma}\label{ram-crit} Let $R$ be a discrete valuation ring with prime element $\pi$, $K$ the quotient field of $R$ and $p$  a prime number.

\smallskip
\begin{compactitem}
\item[\rm (i)] If $\/ K'|K$ is a separable extension of discretely valued fields of degree $p$ with trivial residue field extension and ramification index $p$, then the minimal polynomial $f$ of any prime element of\/ $R'$ is of the form
 $f=T^p + a_{p-1} T^{p-1} + \cdots + a_0\in R[T]$ with $\pi| a_i$ for
every $i$ and  $\pi^2 \nmid a_0$.\smallskip
\item[\rm (ii)] Let $f\in K[T]$ be a separable polynomial of the form $f=T^p + a_{p-1} T^{p-1}+ \cdots  + a_0$ with $a_0\neq 0$, $p\nmid v_R(a_0)$, and such that
\[
\min_{i\ge 0} v_R(a_i) > p\,  \max_{i\ge 0} [v_R(a_0)-v_R(a_i)]\; .
\]
Then $K'=K[T]/(f)$ is a discretely valued field with ramification index $p$ over $K$.
\end{compactitem}
\end{lemma}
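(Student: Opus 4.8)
\emph{Part (i).} The plan is to exploit that a totally ramified extension of prime degree is generated by a prime element, and then to read off the shape of its minimal polynomial from the valuations of its conjugates. First I would fix a prime element $\pi'$ of $R'$; since $v_{R'}(\pi')=1$ does not lie in $v_{R'}(K^\times)=p\,\Z$ we have $\pi'\notin K$, hence $[K(\pi'):K]=p=[K':K]$ and $K'=K(\pi')$. Let $f=T^p+a_{p-1}T^{p-1}+\dots+a_0$ be the (monic) minimal polynomial of $\pi'$. The key preliminary observation is that the extension of $v_R$ to $K'$ is \emph{unique}: by the fundamental inequality $\sum_j e_jf_j\le[K':K]=p$, and we already have one factor with $e=p$, $f=1$, so no other extension exists. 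Consequently, fixing an extension $\bar v$ of $v_R$ to an algebraic closure $\bar K$ normalised so that $\bar v|_{K'}=v_{R'}$, every conjugate $\sigma(\pi')$ of $\pi'$ has $\bar v$-value $1$, since $\bar v\circ\sigma$ restricts to an extension of $v_R$ on $K'$, hence equals $v_{R'}$. Now each $a_i$ is, up to sign, the $(p-i)$-th elementary symmetric function of these conjugates, so $v_{R'}(a_i)\ge p-i$ for $0\le i\le p-1$ and $v_{R'}(a_0)=p$. As $a_i\in K$, $v_{R'}|_K=p\,v_R$, and $R$ is integrally closed, this yields $a_i\in R$ with $\pi\mid a_i$ for all $i$ and $v_R(a_0)=1$, i.e.\ $\pi^2\nmid a_0$, which is the claim.

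\emph{Part (ii).} Here the plan is a Newton polygon argument. I would first translate the numerical hypothesis. Writing $m=\min_i v_R(a_i)$, the hypothesis reads $m>p\,(v_R(a_0)-m)$, i.e.\ $m>\tfrac{p}{p+1}\,v_R(a_0)$; together with $m\le v_R(a_0)$ this forces $v_R(a_0)>0$, and then for $1\le i\le p-1$ one gets
\[
v_R(a_i)\ \ge\ m\ >\ \tfrac{p}{p+1}\,v_R(a_0)\ >\ \tfrac{p-1}{p}\,v_R(a_0)\ \ge\ \tfrac{p-i}{p}\,v_R(a_0),
\]
so every point $(i,v_R(a_i))$ lies on or above the segment joining $(0,v_R(a_0))$ to $(p,0)$. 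Hence the Newton polygon of $f$ with respect to $v_R$ is exactly that single segment, of slope $-v_R(a_0)/p$. Choosing an extension $\bar v$ of $v_R$ to $\bar K$, every root $\alpha$ of $f$ therefore satisfies $\bar v(\alpha)=v_R(a_0)/p$. Since $p\nmid v_R(a_0)$, this value together with $\Z=\bar v(K^\times)$ generates $\tfrac1p\Z$, so $e(K(\alpha)/K)=[\bar v(K(\alpha)^\times):\Z]\ge p$; combined with $e(K(\alpha)/K)\le[K(\alpha):K]\le\deg f=p$ this gives $e(K(\alpha)/K)=[K(\alpha):K]=p$. In particular $f$ is irreducible, so $K'=K[T]/(f)\cong K(\alpha)$ is a field, and $\bar v|_{K'}$ is a discrete valuation (value group $\tfrac1p\Z$) with ramification index $p$ over $K$, as asserted; separability of $f$ enters only to phrase the conclusion as a separable extension of discretely valued fields.

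\emph{Expected main obstacle.} I do not anticipate a deep difficulty; the genuinely delicate point is the bookkeeping with extensions of valuations, since $R$ is not assumed complete or henselian. In (i) one must establish uniqueness of the extension to $K'$ \emph{before} concluding that all conjugates of $\pi'$ share the same value, and in (ii) the Newton polygon reasoning must be run over $\bar K$ rather than over a completion. Both are standard (uniqueness follows from the fundamental inequality plus $ef=[K':K]$), and these are precisely the places where the hypotheses of trivial residue extension, ramification index $p$, and $p\nmid v_R(a_0)$ are actually used; the remaining manipulations of valuations of coefficients against valuations of roots are routine.
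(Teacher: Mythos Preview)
Your proof is correct. For part (i) the paper simply cites Serre, \emph{Local Fields}, I.6, Proposition 18, so your direct argument via uniqueness of the extension (from $ef=p$) and valuations of elementary symmetric functions is essentially what that reference contains.

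For part (ii) your route is genuinely different from the paper's. The paper performs the substitution $T\mapsto \pi^{l}T$ with $l=\max_{i}[v_R(a_0)-v_R(a_i)]$ and divides by $\pi^{pl}$ to obtain a new monic polynomial $g=T^p+b_{p-1}T^{p-1}+\cdots+b_0$ satisfying $0<v_R(b_0)\le v_R(b_i)$ for all $i$ and $p\nmid v_R(b_0)$; it then argues directly in a localization $R''$ of the integral closure, using $t^p=-\sum b_it^i$ to get $p\,v_{R''}(t)=e\,v_R(b_0)$ and hence $p\mid e$. Your Newton polygon argument is more conceptual: you show the polygon is a single segment of slope $-v_R(a_0)/p$, whence every root has valuation $v_R(a_0)/p$, and $p\nmid v_R(a_0)$ forces $e\ge p$, giving irreducibility and $e=p$ in one stroke. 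The paper's substitution is really the Newton polygon computation done by hand (shifting so that the segment from $(0,v_R(b_0))$ to $(p,0)$ visibly lies below all other vertices), so the two arguments are equivalent in spirit; yours is shorter and yields irreducibility more transparently, while the paper's avoids invoking Newton polygon theory and stays entirely within elementary valuation manipulations.
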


\begin{proof} Assertion {\rm (i)} is standard, see for example \cite[I.6 Proposition 18]{Ser-loc}. For assertion {\rm (ii)}, we substitute $T$ by $\pi^l T$, where $l=\max_{i\ge 0} [v_R(a_0)-v_R(a_i)]$, and divide $f$ by $\pi^{p \, l}$. Let
\[
g=T^p + b_{p-1} T^{p-1}+ \cdots  + b_0
\]
denote the resulting monic polynomial. We claim that $0< v_R(b_0) \le v_R(b_i)$ for all $i\ge 0$ and still $p\nmid v_R(b_0)$. In fact, we have  $b_i=\pi^{l(i-p)} a_i$ so that $ v_R(b_i) = v_R(a_i)-l(p-i)> 0$.
We have $ l\ge v_R(a_0)-v_R(a_i) $ by definition, which implies
\[
v_R(b_i) - v_R(b_0) = v_R(a_i) - v_R(a_0)+ l p -l(p-i)= v_R(a_i) - v_R(a_0) +l i \ge 0  ,
\]
validating our claim.

Let $R'$ be the normalization of $R$ in $K[T]/(g)\cong K[T]/(f)=K'$.
Let $R''$ be the localization of $R'$
at some maximal ideal.
We will show that the ramification index $e$ of $R''|R$ is $p$. This shows
that $R'=R''$ and completes the proof of (ii). Let $t\in R''$ be the
image of the variable $T$ under the homomorphism $R[T]/(g)\to R''$. The equation
\[
t^p=-b_{p-1} t^{p-1} - \cdots - b_0
\]
shows that $t\notin R''^\times$. Furthermore, it gives the equality in the middle of
$$p\, v_{R''}(t) =v_{R''}(t^p)=v_{R''}(b_0)=e\, v_{R}(b_0)\; .$$
 As $p\nmid v_{R}(b_0)$,
this implies $p|e$ and, as $e\le p$, this indeed means $e=p$.
\end{proof}

\begin{proof}[Proof of the Key Lemma~\ref{key-lemma}.]
Let $Y$ be the normalization of $X$ in $k(Y')$.  Then
$Y=\Spec (B)$ for some semi-local ring $B$.

\smallskip
{\em Reduction to $\dim(A)=2$.} The assertion is trivial if $\dim(A)=1$. Assume that we have shown Lemma~\ref{key-lemma} for $\dim(A)=2$. We use induction on $\dim(A)$ to prove the general case. For $\dim(A)>2$, choose a point
$x_1\in D$ of dimension one. Set $X_1=\Spec(A_{x_1})$, where $A_{x_1}$ means the localization of $A$ at the prime ideal corresponding to $x_1$, and $X'_1= X_1 \cap X'$.
We deduce by our induction assumption that there exists a curve $C_1$ on $X_1$ with $C'_1=C_1\cap X'_1\ne \varnothing$
such that $Y'\times_{X'} \tilde C'_1 \to \tilde C'_1$ is ramified along $D\times_X \tilde C_1$. Let $X_2$ be the normalization of the closure of $C_1$ in $X$, $X'_2$ be $X_2\times_X X'$
and $Y'_2$ be the normalization of $X'_2$ in $Y'\otimes k(C)$. Finally, the two-dimensional case
produces a curve $C_2$ on $X_2$ with $C'_2=C_2\cap X'_2\ne \varnothing$ such that $Y'_2\times_{X'_2}\tilde C'_2\to \tilde C'_2$ is ramified along $D\times \tilde C_2$. Let $C$ be the image of $C_2$ under the morphism $X_2\to X$, $C$ is a curve on $X$ with $C'=C\cap X'\ne \varnothing$ such that $Y'\times_{X'}\tilde C' \to \tilde C'$ is ramified along $D\cap C$.
The latter since $Y\otimes_X k(C)$ is ramified
over some point of $D\times \tilde{ C}$ if $Y\otimes_X k(C_2) = Y_2 \otimes_{X_2} k(C_2)$ is ramified over some point of $D\times \tilde{C}_2$ by the base change invariance of \'{e}tale morphisms.

\smallskip
{\em Reduction to $\dim(A)=2$ and $A$ and $D$ regular.}
Applying  Proposition~\ref{desing} to  $\Spec(A)$ and $D$, and localizing at a closed point of the
strict transform of $D$, we can assume without loss generality that $A$ and $D$ are regular.

\medskip
Now Lemma~\ref{key-lemma} will be proved by distinguishing three possible cases with different ramification indices.
Since $A$ is factorial, there exists an irreducible $\pi \in A$ with $D=V(\pi)$. Now $R_D=A/(\pi)$,
 $R=A_{(\pi)}$ and $R'=B_{(\pi)}$ are discrete valuation rings. We distinguish several possible cases:

\medskip
{\em 1st case:} $v_{R'}(\pi)=p$.

\smallskip
\noindent
Let $\pi'$ be a prime element of $R'$. After multiplication by units of $R'$ we can assume that $\pi'\in A'$.
The minimal polynomial $f\in K[T]$ of $\pi'$ lies in $A[T]$ and, according to Lemma~\ref{ram-crit}\,(i),
is of the form $f=T^p + a_{p-1} T^{p-1} + \cdots + a_0$ with $\pi| a_i$ for every~$i$ but $\pi^2 \nmid a_0$. Observe that  $A[T]/(f)$ and $B$ are isomorphic over some dense open subscheme of $X$. We use Proposition~\ref{desing} to resolve singularities of the divisor generated by the coefficients of $f$ and localize at the closed point of the strict transform of $D$. This new ring will still be denoted by $A$,  and the induced open subscheme of $X=\Spec(A)$ by $X'$.

Let $\pi$ be a prime element in the new desingularized ring with $D=V(\pi)$ and denote by $t\in A$ an element such that $V(t)$ is the exceptional divisor.  Then $(t,\pi)$ is the maximal ideal of $A$ and
the $a_i$ are supported on $V(t)\cup D$, so that $a_i$ is up to a unit of $A$ equal to $t^{l_i} \pi^{e_i}$ with $e_0=1$ and $e_i\ge 1$ for $i>0$. In order to prove the Key Lemma~\ref{key-lemma}, we will construct a regular curve $C$ on $X$ of the form $C=V(t^N-\pi)$ with the required properties.

\smallskip\noindent
For this we choose a natural number $N$ such that

\begin{compactitem}
\item{$N>p\, \max_{i\ge 0}(l_0-l_i)$.}\smallskip
\item{$C=V(t^N-\pi)$ meets $X'$ and over the generic point of $C$ the algebras $A[T]/(f)$ and $B$ are isomorphic.}\smallskip
\item{$N+l_0$ is not divisible by $p$.}
\end{compactitem}
Then $R_C=A/(t^N-\pi)$ is a discrete valuation ring and, since $(t,t^N-\pi)=(t,\pi)$ is the maximal ideal of $A$, the element $t$ induces a prime element of $R_C$.
Let $K_C=Q(R_C)$. Our assumption on $N$ shows that $K'_C:=B\otimes_A K_C \cong K_C[T]/(f_C)$,
where $f_C\in R_C[T]$ is the polynomial induced by $f$. The polynomial $f_C$ is separable as $B$ is \'{e}tale over $X'$ and of the form
\[
f_C=T^p + a^C_{p-1} T^{p-1} +\cdots + a^C_0
\]
with $v_{R_C}(a_i)=N\, e_i +l_i$ for $i\ge 0$.
Therefore the assumptions  of Lemma~\ref{ram-crit}\,(ii) are fulfilled, so that $K'_C$ is discretely valued and has ramification index $p$ over $K_C$.

\medskip
{\em 2nd case:} $v_{R'}(\pi)=1$.

\smallskip\noindent
In this case the residue field extension $K'_D|K_D$ of $R'|R$ is an inseparable extension of degree $p$. In particular,
$K_D=Q(R_D)$ has characteristic $p$. Let $R'_D$ be the normalization of $R_D$ in $K'_D$ and let $\pi_D$ be a
prime element of $R_D$. Then $R'_D$ is a discrete valuation ring by \cite[VI, 8.6 Corollary 2]{Bour-comm}. There are two subcases:

\medskip
{\em Subcase 2a:} $v_{R'_D}(\pi_D)=p$.

\smallskip\noindent
In this subcase we have $K'_D=K_D(\sqrt[p]{\pi_D})$ if we choose the prime element $\pi_D$ appropriately.
Let $f\in R[T]$ be the minimal polynomial of a lift to $R'$ of $\sqrt[p]{\pi}$. After a resolution of
singularities as in Proposition \ref{desing} of the divisor generated by $D$ and the coefficients of $f$
and after localizing at the closed point of the strict transform of $D$, we can assume the following:

\smallskip

\begin{compactitem}
\item{The coefficients of $f$ are supported on $E \cup D$, where $E$ is the exceptional divisor,}
\item{$E$ and $D$ intersect transversely.}
\end{compactitem}

\smallskip\noindent
The constant coefficient $t$ of $f$ still induces a prime element in $R_D$ and by assumption $t$ is supported on $E$ so that we conclude $E=V(t)$.
So $f$ is  of the form $f=T^p + a_{p-1} T^{p-1} +  \cdots a_1 T + t$ with
$a_i$ for $i>0$ up to a unit of $A$ equal to $\pi^{e_i} t^{l_i}$ with $e_i>0$ and $l_i\in\mathbb{Z}$.
The ideal $(t,\pi)$ of $A$ is the maximal ideal, since $E$ and $D$ intersect tranversely.
There exists some dense open subscheme of $\Spec(A_t)$ such that
$B$ and $A_t[T]/(f)$ are isomorphic over this subscheme. We will construct a natural number $N$ such that the regular curve $C=V(t^N-\pi)$
has the required properties.
In fact, choose $N>0$ such that the following properties are satisfied:

\smallskip
\begin{compactitem}
\item{$N+l_i>0$ for all $i>0$.}
\item{The generic point of $V(t^N-\pi)$ lies in $X'$ and in the open subscheme of $\Spec(A_t)$ where $B$ and $A_t[T]/(f)$ are isomorphic. }
\end{compactitem}

\smallskip\noindent
Since $(t,t^N-\pi)=(t,\pi)$ is the maximal ideal of $A$, the element $t$ induces a prime element of the discrete valuation ring $R_C=A/(t^N-\pi)$.  With $K_C=Q(R_C)$,  we have
$B\otimes_A K_C \cong K_C[T] /(f_C)$, where $f_C$ is the separable polynomial in $K_C[T]$ induced by $f$. Then $f_C$ is of the form $$f_C=T^p  + a_{p-1}^C T^{p-1} + \cdots + a_0^C$$ with $v_{R_C}(a_i^C)>0$ for every $i$ and
$v_{R_C}(a_0^C)=1$. Finally, Lemma \ref{ram-crit}\,(ii) shows that $K'_C$ is discretely valued and
ramified over $K_C$ with ramification index $p$.

\medskip
{\em Subcase 2b:} $v_{R'_D}(\pi_D)=1$.

\smallskip\noindent
In this subcase the residue extension of $R'_D|R_D$ is an inseparable extension of degree $p$, so that $K'_D=K_D(\sqrt[p]{\xi})$ for some
$\xi\in R_D^\times$ which is not a $p$-th power in the residue field of $R_D$.
 Lift $\xi$ to $R'$ and denote its minimal polynomial by $f\in R[T]$. As in the
previous subcase, we resolve singularities and localize so that without loss of generality we can assume the coefficients of $f$
are supported on $E\cup D$, where $E=V(t)$ is the exceptional divisor, and so that $E$ and $D$ intersect transversely.
The prime element $t$ can be chosen arbitrarily now, in contrast to subcase 2a. Observe that the residue field of $A$ does not change in the desingularization process (we blow up a regular scheme in a regular center).
Then $f$ is  of the form $f=T^p + a_{p-1} T^{p-1} +  \cdots a_1 T + a_0$ with $a_i$ for $i>0$ up to a unit of $A$
equal to $\pi^{e_i} t^{l_i}$ with $e_i>0$ and $l_i\in\mathbb{Z}$ and such that $a_0\in A^\times$ induces an element
in the residue field of $A$ which is not a $p$-th power.
There exists an open subscheme of $\Spec(A_t)$ such that
$B$ and $A_t[T]/(f)$ are isomorphic over this subscheme. Again the curve $C$ we are searching for will be of the form $C=V(t^N-\pi)$ for some
natural number $N$. We choose $N>0$ such that the following properties are satisfied:

\smallskip
\begin{compactitem}
\item{$N+l_i>0$ for all $i>0$,}
\item{For $C=V(t^N-\pi)$ the intersection $C\cap X'$ is nonempty and the generic point of $C$ lies in the open subscheme of $\Spec ( A_t)$ over which $B$ and $A_t[T]/(f)$ are isomorphic. }
\end{compactitem}

\smallskip\noindent
As in the previous cases it follows that $t$ induces a prime element of the discrete valuation ring $R_C=A/(t^N-\pi)$ and that, with $K_C=Q(R_C)$, we have $K'_C:= B\otimes_A K_C \cong K_C[T] /(f_C)$, where $f_C$ is the polynomial in $K_C[T]$ induced by $f$. Then $f_C$ is of the form
\[
f_C=T^p  + a_{p-1}^C T^{p-1} + \cdots + a_0^C
\]
with $v_{R_C}(a_i^C)>0$ for every $i>0$ and such that
$a_0\in R_C^\times$ is not a $p$-th power in the residue field of $R_C$. It follows that $K'_C$ is a
discretely valued field ramified over $K_C$ with ramification index $e=1$.
\end{proof}

\section{Some valuation theory}\label{valsec}

Working over a general base scheme $S$, we first have to fix some notation.

\begin{definition}
We call an integral noetherian scheme $X$ {\em pure-dimensional} if $\dim  X= \dim \mathcal{O}_{X,x}$ for every closed point $x\in X$.
\end{definition}

\begin{remark}
Any integral scheme of finite type over a field or over a Dedekind domain with infinitely many prime ideals is pure-dimensional. A proper scheme over a pure-dimensional universally catenary scheme is pure-dimensional, see
 \cite{ega4}, IV, 5.6.5. The affine line ${\mathbb A}^1_{\Z_p}$ over the ring of $p$-adic integers gives an example of a regular scheme which is not pure-di\-men\-sio\-nal.
\end{remark}

Let from now on $S$ be an integral, pure-dimensional, separated and excellent base scheme.  We work in the category $\Sch(S)$ of separated schemes of finite type over $S$. In order to avoid the effect that open subschemes might have smaller (Krull-)dimension than the ambient scheme (e.g.\ $\Spec(\Q_p)\subset \Spec(\Z_p)$), we redefine the notion of dimension for schemes in $\Sch(S)$ as follows:

\smallskip
Let $X\in Sch(S)$ be integral and let $T$ be the closure of the image of $X$ in $S$. Then we put
\[
\dim_S X:= \mathrm{deg.tr.}(k(X)|k(T)) + \dim_{\textrm{Krull}} T.
\]
If the image of $X$ in $S$ contains a closed point of $T$,  then $\dim_S X=\dim_{\textrm{Krull}} X$ by \cite{ega4}, IV, 5.6.5.  This equality holds for arbitrary $X\in \Sch(S)$ if $S$ is of finite type over a field or over a Dedekind domain with infinitely many prime ideals.

\begin{definition}
A {\em function field over $S$} is the function field $K=k(X)$ of some integral scheme $X\in \Sch(S)$. We call $X$ a model of $K$ and put $\dim_S K=\dim_S X$.
\end{definition}

Every function field over $S$ admits a proper model, see~\cite{Lue} for a scheme theoretic proof. Let $K=k(X)$ be a function field over $S$ and let $T$ be the closure of the image of $X$ in $S$ (with reduced scheme structure). Then $T$ is an integral closed subscheme in $S$.

\begin{definition}
We denote by $\Val_S(K)$ the set of nonarchimedean valuations $v$ on $K$ such that $\O_v$ dominates $\O_{T,t}$ for some point $t\in T$.
For $v\in \Val_S(K)$ and for a given proper  $S$-model $X$ of $K$ we denote by $Z_v$ the center of $v$ on $X$, which is the (uniquely defined) integral closed subscheme of $X$ such that
$\O_v$ dominates $\O_{X,Z_v}$.
\end{definition}

Let $n\leq \dim_S K$ be a natural number. We consider $\Z^n$ as an ordered group with the lexicographic ordering.

\begin{definition}
We call an $S$-valuation $v: K^\times \twoheadrightarrow \Z^n$, i.e.~a valuation with ordered value group $\Z^n$, a discrete rank $n$ valuation on $K$. The residue field is denoted by $Kv$.
\end{definition}

The following proposition, due to Abhyankar, is well known. The inequality in the proposition is called the Abhyankar inequality.
\begin{proposition}\label{valuationtheory}
Let $K$ be a function field over $S$ and let $\bar X$ be a proper model of $K$. Let $v$ be a discrete rank $n$ valuation on $K$ with center $Z_v$  on $\bar X$. Then we have the inequality
\[
\mathrm{deg.tr.}(Kv|k(Z_v)) + \dim_S Z_v + n \leq \dim_S K\,.
\]
Suppose that equality holds. Then $Kv|k(Z_v)$ is a finitely generated field extension. Moreover, there exists a proper model $\bar X'$\/ of $K$ such that $\dim_SZ'_v=\dim_S K - n$ and  $Kv=k(Z'_v)$.
\end{proposition}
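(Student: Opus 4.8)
The plan is to prove the Abhyankar inequality by a standard Zariski–Abhyankar argument: reduce to the local ring $A = \mathcal O_{\bar X, Z_v}$, which is a noetherian local domain with fraction field $K$, dominated by $\mathcal O_v$, and then bound the rank plus rational rank plus residue transcendence degree of $v$ by $\dim A$. Concretely, I would first recall the elementary lemma that if $v_1, \dots, v_r \in K^\times$ have values that are $\Z$-linearly independent in the value group $\Gamma_v$, and $\bar u_1, \dots, \bar u_s \in Kv$ are algebraically independent over $k(Z_v)$ represented by units $u_j$ of $\mathcal O_v$ with $v(u_j)=0$, then for any choice of representatives one gets $r + s$ elements whose images in $\mathcal O_v$ are ``independent'' in a suitable sense; taking a common denominator one lands inside a localization of $A$ and shows $\dim A \ge r + s$. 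Applying this with $r = n$ (since the value group is $\Z^n$, which has rational rank $n$) and $s = \mathrm{deg.tr.}(Kv|k(Z_v))$ yields $\mathrm{deg.tr.}(Kv|k(Z_v)) + n \le \dim \mathcal O_{\bar X, Z_v}$. Since $\bar X$ is proper over the pure-dimensional excellent $S$, it is pure-dimensional and universally catenary, so $\dim \mathcal O_{\bar X, Z_v} = \dim_S K - \dim_S Z_v$; combining gives the inequality.

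For the equality case, suppose $\mathrm{deg.tr.}(Kv|k(Z_v)) + \dim_S Z_v + n = \dim_S K$. Then the argument above is ``tight'': the elements $v(t_1), \dots, v(t_n)$ generating $\Z^n$ together with lifts $u_1, \dots, u_s$ of a transcendence basis of $Kv|k(Z_v)$, after clearing denominators, generate a regular system of parameters up to radical in $A$, forcing (by the dimension count) that $Kv|k(Z_v)$ is finitely generated — indeed $u_1, \dots, u_s$ must already generate $Kv$ over $k(Z_v)$ because no further transcendence or algebraic ``room'' is available in $A$. For the model statement, I would construct $\bar X'$ by blowing up: start from $\bar X$, and successively blow up the center of $v$. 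Since equality holds, the center $Z_v$ of $v$ on $\bar X$ has codimension exactly $n + \mathrm{deg.tr.}(Kv|k(Z_v))$ in $\bar X$; blowing up $Z_v$ and passing to the center of $v$ on the blow-up strictly increases $\dim_S Z_v$ (this is the standard fact that blowing up a center of a valuation of positive codimension moves the center to a larger subscheme, at least after finitely many steps, because the value group is discrete of rank $n$ and one can monomialize). Iterating, after finitely many blow-ups one reaches a proper model $\bar X'$ on which the center $Z'_v$ satisfies $\dim_S Z'_v = \dim_S K - n$, and then the inequality (now an equality with $\mathrm{deg.tr.}(Kv|k(Z'_v)) = 0$) forces $Kv = k(Z'_v)$.

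The main obstacle I expect is the last part: producing the model $\bar X'$ with $\dim_S Z'_v = \dim_S K - n$ and $Kv = k(Z'_v)$. One must be careful that blowing up a valuation center actually terminates in the right dimension — this is essentially a local uniformization statement for the discrete rank $n$ valuation $v$, which is not available in full generality, but for $\Z^n$-valued valuations it can be handled combinatorially: the associated graded ring structure and the fact that $\Gamma_v = \Z^n$ with lexicographic order let one reduce to a toric/monomial situation where the center can be explicitly driven down to codimension $n$ by a sequence of monomial blow-ups, and the residue field of $v$ is reached in the limit since it is finitely generated over $k(Z_v)$ by the already-established equality case. Alternatively, one decomposes $v$ as a composite of $n$ rank one discrete valuations and handles one step at a time, at each stage replacing $\bar X$ by a proper model on which the rank one valuation has a divisorial center, which is exactly the situation where the center is a prime divisor with the correct residue field; iterating $n$ times gives the desired $\bar X'$. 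The bookkeeping of $\dim_S$ versus Krull dimension under these blow-ups, using that everything stays proper over $S$ and hence pure-dimensional and universally catenary, is the part requiring the most care.
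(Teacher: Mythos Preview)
Your approach to the Abhyankar inequality is essentially the same as the paper's: both reduce to the local ring $A = \mathcal O_{\bar X, z_v}$, observe (via pure-dimensionality of $\bar X$) that $\dim A = \dim_S K - \dim_S Z_v$, and invoke the classical Abhyankar bound; the paper simply cites Vaqui\'e [Va, Th\'eor\`eme~9.2] for this and for the finite generation of $Kv|k(Z_v)$ in the equality case.

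The genuine divergence is in constructing the model $\bar X'$. You propose iterated blow-ups of the center of $v$, and you correctly flag termination as the obstacle --- this would amount to a form of local uniformization for $v$, and the bookkeeping you describe (monomializing via the $\Z^n$-grading, or decomposing $v$ into rank-one pieces and making each one divisorial) is delicate and not obviously finite without further work. The paper avoids all of this with a one-line trick: since $Kv|k(Z_v)$ is already known to be finitely generated, choose units $s_1, \dots, s_m \in \mathcal O_v^\times$ whose residues generate $Kv$ over $k(Z_v)$; take an affine open $\Spec(B) \subset \bar X$ containing $z_v$, set $X' = \Spec(B[s_1, \dots, s_m])$, and compactify $X' \subset \bar X'$ over $S$ (using L\"utkebohmert). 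The center of $v$ on $X'$ then has residue field containing all the $\bar s_i$, hence equal to $Kv$, and the dimension statement follows from the inequality itself. So the obstacle you anticipated is real for your route but is entirely circumvented by adjoining residue-field generators to an affine chart rather than blowing up.
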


\begin{proof}
Let $z_v$ be the generic point of $Z_v$. Since $S$ is pure-dimensional, also $\bar X$ is  pure-dimensional and the Krull dimension of the ring
 $A:=\mathcal{O}_{\bar X,z_v}$ is equal to $\dim_S K - \dim_S Z_v$. So \cite[Th\'eor\`eme~9.2]{V} comprises our lemma except for the last statement.
In order to prove the latter, let $s_1,\ldots , s_m\in \mathcal{O}_v^\times$ be a finite family of elements which generate the residue field of $\mathcal{O}_v$ over
the residue field of the local ring $A$. Let $Spec(B)\subset \bar X$ be an open neighbourhood of $z_v$ and set $X':= Spec(B[s_1, \ldots , s_m])$. Then
$v$ has nontrivial a center in $X'$ and the function field of this center coincides with $Kv$. Finally, we let $X'\subset \bar X'\to \bar X$ be a compactification, which
exists by \cite{Lue}.
\end{proof}

We call $v$ {\em geometric} if equality holds in the Abhyankar inequality in Proposition~\ref{valuationtheory}. This notion does not depend
on the proper model we have chosen. If $v$ is geometric, then $Kv$ is a function field over $S$ of dimension $\dim_S K - n$. In case $n=\dim_S K$ the Abhyankar inequality is automatically an equality, so that every discrete rank $\dim_S K$ valuation is geometric and the center on every proper model is a closed point.

If $v$ is a valuation on $K$ and $w$ a valuation on $Kv$, then the ring
\[
\mathcal{O}_{v\circ w}:= \{ x \in \O_v \mid \bar x \in \O_w \}
\]
is a valuation ring and the associated valuation on $K$ is called the composite valuation $v\circ w$.

The following lemma is `folklore' but
we could not find a reference.

\begin{lemma} \label{valdeco}
Let $K$ be a function field over $S$.\smallskip

\begin{compactitem}
\item[\rm (i)]{ If\/ $v$ is a geometric discrete $S$-valuation of rank $n$ on $K$ and $w$ is a geometric discrete $S$-valuation of rank $m$ on $Kv$,
then $v\circ w$ is a geometric discrete rank $(n+m)$ valuation. }

\item[\rm (ii)]{ Each discrete geometric  $S$-valuation $v$ on $K$ of rank $n$ can be written in the form $v=v_1\circ \cdots \circ v_n$, where the $v_i$, $i=1,\ldots, n$, are geometric discrete rank $1$ valuations on $Kv_{i-1}$ (set $Kv_0:=K$).}
\end{compactitem}
\end{lemma}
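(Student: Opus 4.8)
The plan is to reduce both parts to two observations together with a bookkeeping lemma on composite valuations. (A) If $L\subseteq L'$ are function fields over $S$, then $\dim_S L'=\dim_S L+\mathrm{deg.tr.}(L'|L)$: pick a model $Y$ of $L$, an affine chart $\Spec A\subseteq Y$ (so $\dim_S\Spec A=\dim_S L$), and a finitely generated $A$-subalgebra $B\subseteq L'$ with $\mathrm{Frac}(B)=L'$; as $\Spec B\to\Spec A$ is dominant, the images of $\Spec A$ and $\Spec B$ in $S$ have the same closure $T$, and the definition of $\dim_S$ gives $\dim_S L'=\dim_S\Spec B=\mathrm{deg.tr.}(L'|L)+\dim_S\Spec A$. (B) (a dimension criterion for geometricity) If $u$ is an $S$-valuation of rank $r$ on a function field $M$ over $S$ and $Mu$ is again a function field over $S$ (which holds whenever $u$ is geometric, by the remark after Proposition~\ref{valuationtheory}), then $\dim_S Mu\le\dim_S M-r$, with equality if and only if $u$ is geometric; indeed, on a proper model $\bar X$ of $M$ with center $Z_u$, the Abhyankar inequality (Proposition~\ref{valuationtheory}) and (A) applied to $k(Z_u)\subseteq Mu$ give $\dim_S Mu=\dim_S Z_u+\mathrm{deg.tr.}(Mu|k(Z_u))\le\dim_S M-r$, with equality here exactly when the Abhyankar inequality is an equality. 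Finally, the bookkeeping lemma: for a proper model $\bar X$ of $K$ and a composite $S$-valuation $v=a\circ b$ (with $a$ on $K$, $b$ on $Ka$), the center of $v$ on $\bar X$ equals the center of the restricted valuation $b|_{k(Z_a)}$ on the proper $S$-scheme $Z_a$, where $Z_a$ is the center of $a$; this is a routine check with valuation rings, and in particular it shows that a composite of $S$-valuations again lies in $\Val_S$.

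For part~(i): standard composite-valuation theory (\cite[VI]{Bour-comm}) makes $v\circ w$ a valuation whose value group is an extension of $\Gamma_v\cong\Z^n$ by the convex subgroup $\Gamma_w\cong\Z^m$; since $\Z^n$ is free this extension splits, and the induced order is the lexicographic one, so $v\circ w$ is a discrete rank-$(n+m)$ valuation with residue field $(Kv)w$, and by the bookkeeping lemma $v\circ w\in\Val_S(K)$. Since $v$ and $w$ are geometric, (B) gives $\dim_S K(v\circ w)=\dim_S(Kv)w=\dim_S Kv-m=\dim_S K-n-m$, and (B) applied to $v\circ w$ then shows it geometric.

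For part~(ii): the factorization $v=v_1\circ\cdots\circ v_n$, with $v_i$ a rank-one valuation on $Kv_{i-1}$, is the classical decomposition of a valuation with lexicographic value group $\Z^n$ along its chain of convex subgroups, surjectivity of $v$ onto $\Z^n$ forcing each $v_i$ onto $\Z$. It remains to show each $v_i$ is geometric, and the heart of the matter is the \emph{Claim:} a coarsening of a geometric $S$-valuation is again geometric. Granting the Claim, each partial composite $v^{(k)}:=v_1\circ\cdots\circ v_k$ is a coarsening of $v$, hence geometric of rank $k$, so $Kv^{(k)}$ is a function field over $S$ of dimension $\dim_S K-k$; since $Kv^{(k+1)}=(Kv^{(k)})v_{k+1}$ is also such a function field, (B) applied to the rank-one valuation $v_{k+1}$ on $Kv^{(k)}$ gives $\dim_S Kv^{(k+1)}\le\dim_S K-(k+1)$, with equality because $v^{(k+1)}$ is geometric, so the equality case of (B) forces $v_{k+1}$ geometric; hence all $v_i$ are geometric.

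To prove the Claim, induction on the corank reduces it to the case of a corank-one coarsening $u$ of a geometric valuation $v$ of rank $n$ on a function field $K$; write $v=u\circ\iota$ with $\iota$ of rank one on $Ku$, and fix (Proposition~\ref{valuationtheory}, last clause) a proper model $\bar X$ of $K$ with $\dim_S Z_v=\dim_S K-n$ and $k(Z_v)=Kv$. Then $Z_v\subseteq Z_u$, and by the bookkeeping lemma $\bar\iota:=\iota|_{k(Z_u)}$ has center $Z_v$ on the proper $S$-scheme $Z_u$ and residue field $k(Z_u)\bar\iota=Kv=k(Z_v)$. Putting $\rho:=\mathrm{rank}(\bar\iota)$ and $\delta:=\mathrm{deg.tr.}(Ku|k(Z_u))$, the Abhyankar inequality for $\bar\iota$ on $Z_u$ gives $\dim_S Z_u\ge\dim_S K-n+\rho$, the classical Abhyankar inequality for the valued-field extension $(Ku,\iota)\supseteq(k(Z_u),\bar\iota)$ gives $\delta\ge 1-\rho$, and the Abhyankar inequality for $u$ on $\bar X$ gives $\delta+\dim_S Z_u+(n-1)\le\dim_S K$; adding the first two forces the third to be an equality, i.e.\ $u$ is geometric. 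The step I expect to be the main obstacle is exactly this corank-one case, where one is forced to run the Abhyankar inequality on the a priori unrelated proper model $Z_u$ for the restricted inner valuation; this is what makes the bookkeeping lemma on centers of composite valuations essential (and it is also the ingredient needed to see $v\circ w\in\Val_S(K)$ in part~(i)). Everything else is formal manipulation of composite valuations on top of Proposition~\ref{valuationtheory} and its classical counterpart for valued-field extensions.
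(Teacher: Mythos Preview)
Your argument is correct. Part~(i) is essentially the paper's proof repackaged: the paper picks a model with $Kv=k(Z_v)$ and adds the two Abhyankar equalities directly, which is exactly what your criterion~(B) encodes.

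For part~(ii) the two arguments diverge in organization. The paper peels off from the \emph{top}: it chooses a proper model on which $Kv_1|k(Z_{v_1})$ is algebraic, restricts $v_2\circ\cdots\circ v_n$ to $k(Z_{v_1})$, and adds the Abhyankar inequalities for $v_1$ and for this restriction to force both into equalities; Proposition~\ref{valuationtheory} then makes $Kv_1|k(Z_{v_1})$ finite, so $Kv_1$ is a function field and the induction continues on it. You peel off from the \emph{bottom} via your Claim on corank-$1$ coarsenings, and in place of the finiteness/restriction trick you introduce one extra tool, the classical Abhyankar inequality for the valued-field extension $(Ku,\iota)\supseteq(k(Z_u),\bar\iota)$, to squeeze the Abhyankar inequality for $u$ into an equality. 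The trade-off: your route isolates ``coarsenings of geometric valuations are geometric'' as a clean reusable statement (arguably the real content of (ii)) at the price of importing the valued-field Abhyankar inequality from outside Proposition~\ref{valuationtheory}; the paper stays entirely inside Proposition~\ref{valuationtheory} but is less modular and has to track the rank of the restricted valuation $w$ along the algebraic extension $Kv_1|k(Z_{v_1})$.
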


\begin{proof}
(i) We have to show that $v\circ w$ is geometric. We use Proposition~\ref{valuationtheory} to find a proper model $\bar X$ of $K$ such that $Kv=k(Z_v)$,  where $Z_v$ is the center of~$v$ on $\bar X$. Then $Z_v$ is a proper model of $Kv$ and we denote the center of $w$ on $Z_v$ by $Z_w$. Note that $Z_w=Z_{v\circ w}$. The residue fields of $w$ and of $v\circ w$ are the same. We get the equality
\begin{eqnarray*}
n + ( \mathrm{deg.tr.}(Kw|k(Z_w)) + m ) & = & (\dim_S K - \dim_S Z_v) + (\dim_S Z_v - \dim_S Z_w)\\
 & = & \dim_S K - \dim_S Z_{v\circ w},
\end{eqnarray*}
which shows that $v\circ w$ is geometric.

(ii) Every discrete rank $n$ valuation $v$ can be uniquely decomposed into a chain $v=v_1\circ \cdots \circ v_n$ of discrete rank $1$ valuations. By Proposition~\ref{valuationtheory}, for any proper model $\bar X$ of $K$, the transcendence degree $\mathrm{deg.tr.}(Kv_1|k(Z_{v_1}))$ is finite. In a similar way as in the proof of Proposition~\ref{valuationtheory}, we find a proper model $\bar X$ of $K$ such that $\mathrm{deg.tr.}(Kv_1|k(Z_{v_1}))=0$, i.e.\ the extension is algebraic. Let $w$ be the restriction of $v_2\circ \cdots \circ v_n$ to $k(Z_{v_1})$. Then $Z_{v_1}$ is a proper model of $k(Z_{v_1})$ and the center $Z_w$ of $w$ on $Z_{v_1}$ is equal to $Z_v$. Furthermore, the residue fields of $v$ and of $v_2\circ \cdots v_n$ are the same, hence $Kv$ is an algebraic extension of the residue field  $Kw$ of $w$. Using the Abhyankar inequality for $v_1$ and for $w$, we obtain
\begin{eqnarray*}
\mathrm{deg.tr.}(Kv|k(Z_v)) + n & = &  1  + (\mathrm{deg.tr.}(Kw|k(Z_w)) + n-1 ) \\
 & \le &  (\dim_S K - \dim_S Z_{v_1}) + (\dim_S Z_{v_1} - \dim_S Z_w)\\
 & = & \dim_S K -\dim_S Z_v.
\end{eqnarray*}
Since $v$ is geometric, equality holds in both Abhyankar inequalities;  hence $v_1$ and $w$ are geometric. By Proposition~\ref{valuationtheory}, the extension $Kv_1|k(Z_{v_1})$ is finite, and so $Kv_1$ is a function field over $S$ and  $v_2\circ\cdots\circ v_n$ is a geometric discrete $S$-valuation of rank $n-1$ on this field. Now the result follows by induction.
\end{proof}

Let $v$ be a valuation on the field $K$, $L|K$  a finite Galois extension with Galois group $G$ and $w$ a valuation on $L$ which extends $v$.
Recall that the decomposition and the inertia group $G_w(L|K)$ and $T_w(L|K)$ of $w$ in $L|K$ are defined by
\[
G_w(L|K)=\{g\in G \mid gw=w\} \text{ and } T_w(L|K)=\ker \big(G_w(L|K) \to \Aut(Lw|Kv)\big),
\]
respectively. Furthermore, we have the ramification group
\[
V_w(L|K)= \{g\in G_w(L|K)\mid w(gx/x-1\big) > 0 \text{ for all } x \in L^\times \}.
\]
If $\hbox{char}(Kv)=0$, then $V_w(L|K)=1$, and if $p:=\hbox{char}(Kv)>0$, then $V_w(L|K)$ is the unique $p$-Sylow subgroup of $T_w(L|K)$.
If $w$ and $w'$ are different extensions of $v$, then the decomposition, inertia and ramification groups of $w$ and $w'$ are conjugate and we sometimes write $G_v(L|K)$, $T_v(L|K)$ and $V_v(L|K)$ for these groups if we don't care for conjugation.
There are several definitions of ramification for valuations used in the literature. We use the following:

\begin{definition}
Let $v$ be a valuation on a field $K$ and let $L|K$ be a finite Galois extension. We say that $v$ is {\em unramified} (resp.\ {\em tamely ramified}) in $L|K$ if $T_v(L|K)$ is trivial (resp.\ if $V_v(L|K)$ is trivial).
We say that $v$ is unramified (resp.\ tamely ramified) in a finite separable extension $L|K$ if it is unramified (resp.\ tamely ramified) in the Galois closure of $L|K$.
\end{definition}

\begin{remark}
Let $v$ be a valuation with residue characteristic $p$ on a field $K$ and let $L|K$ be a finite separable extension. Let $w_1,\ldots ,w_n$ be the different extensions of $v$ to $L$. We denote by $e(w_i|v)$ the ramification index of $w_i$ over $v$ and we put $f(w_i|v)=[Lw_i:Kv]$. By the results of \cite{End}, \S22, the valuation $v$ is unramified (resp.\ tamely ramified) in $L|K$ in the sense of the above definition if and only if the following conditions $(1)$--$(3)$ hold: \smallskip
\begin{compactitem}
\item[\rm (1)]  $e(w_i|v)=1$ (resp.\ $e(w_i|v)$ is prime to $p$) for $i=1,\ldots , n$,
\item[\rm (2)]  $Lw_i|Kv$ is separable for $i=1,\ldots , n$,
\item[\rm (3)]  $\sum_{i=1}^n e(w_i|v) f(w_i|v) = [L:K]$.
\end{compactitem}

\smallskip\noindent
If $v$ is a discrete rank $n$ valuation, then conditions (1) and (2) already imply (3).
\end{remark}

\begin{lemma}\label{val-purity}
Let $L|K$ be a finite separable extension of function fields over $S$ and let $d=\dim_S K$. Then the following are equivalent. \smallskip
\begin{compactitem}
\item[\rm (i)]{ $L|K$ is ramified at a (geometric) discrete rank $d$ valuation.}

\item[\rm (ii)]{ $L|K$ is ramified at a geometric discrete rank $1$ valuation.}
\end{compactitem}
\end{lemma}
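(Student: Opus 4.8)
The plan is to prove the two implications separately, with the direction (i)$\Rightarrow$(ii) being the substantive one; (ii)$\Rightarrow$(i) should follow more formally by composing valuations.

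For (ii)$\Rightarrow$(i): suppose $L|K$ is ramified at a geometric discrete rank $1$ valuation $v_1$. Then $Kv_1$ is a function field over $S$ of dimension $d-1$, and the extension $Lw_1|Kv_1$ (for a chosen prolongation $w_1$) is a finite separable extension of function fields over $S$ capturing the ramification. By induction on $d$ one finds a geometric discrete rank $d-1$ valuation $\bar v$ on $Kv_1$ at which $Lw_1|Kv_1$ is ramified; then by Lemma~\ref{valdeco}(i) the composite $v=v_1\circ\bar v$ is a geometric discrete rank $d$ valuation on $K$. One must check that composing with $v_1$ does not destroy ramification: this is where one uses the relation between inertia/ramification groups of a valuation and those of its residue valuation — the ramification group $V$ of the composite surjects onto, or contains a conjugate of, the ramification group of $\bar v$ acting on the residue field, so nontriviality is preserved. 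The base case $d=1$ is trivial.

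For (i)$\Rightarrow$(ii): let $v$ be a geometric discrete rank $d$ valuation ramified in $L|K$. By Lemma~\ref{valdeco}(ii), decompose $v=v_1\circ\cdots\circ v_d$ into geometric discrete rank $1$ valuations, where $v_i$ lives on $Kv_{i-1}$. The idea is to "peel off" $v_1$: if $v_1$ is already ramified in $L|K$ we are done. Otherwise $v_1$ is unramified, so a prolongation $w_1$ of $v_1$ to $L$ has trivial inertia, hence $Lw_1|Kv_1$ is a finite separable extension of function fields over $S$ of dimension $d-1$, the residue valuation $\bar v:=v_2\circ\cdots\circ v_d$ (restricted appropriately) is a geometric discrete rank $d-1$ valuation on $Kv_1$, and — because $v_1$ is unramified — the ramification of $v$ in $L|K$ is "transported" to ramification of $\bar v$ in $Lw_1|Kv_1$. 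Here one invokes the standard compatibility of higher ramification/inertia under forming residue fields at an unramified step: when $T_{w_1}(L|K)=1$, the inertia (resp.\ ramification) group of $v$ equals that of $\bar v$ in $Lw_1|Kv_1$ up to the identification $G_{w_1}(L|K)=\Gal(Lw_1|Kv_1)$. Then apply the inductive hypothesis in dimension $d-1$ to obtain a geometric discrete rank $1$ valuation on $Kv_1$ ramified in $Lw_1|Kv_1$, and compose it back with $v_1$ (using Lemma~\ref{valdeco}(i)) to get a geometric discrete rank $1$ valuation — wait, that gives rank $2$; instead one simply keeps the rank $1$ valuation produced on $Kv_1$ and observes that a geometric rank $1$ valuation $u$ on $Kv_1$ pulls back, via $v_1$, to the fact that $v_1\circ u$ is geometric of rank $2$ on $K$ — so to land on rank $1$ one must argue differently: one should instead induct on $d$ directly, reducing the rank $d$ statement on $K$ to a rank $d-1$ statement on $Kv_1$, and the conclusion "ramified at \emph{some} geometric rank $1$ valuation of $Kv_1$" already is a geometric rank $1$ valuation on a function field over $S$; but the lemma asks for a rank $1$ valuation on $K$. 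The fix: when $v_1$ is unramified, use that the rank $1$ valuation $u$ on $Kv_1$ gives ramification of $Lw_1|Kv_1$ at $u$, and then restrict attention to the function field $Kv_1$ — actually the cleanest route is to prove by induction on $d$ the statement "(i) holds iff $L|K$ is ramified at some geometric rank $1$ valuation", treating $v_1$ itself as a candidate rank $1$ valuation and, if it fails, passing to $Kv_1$ and noting that any geometric rank $1$ valuation of $Kv_1$ at which $Lw_1|Kv_1$ ramifies \emph{is} the residue field of a composite, so $v_1\circ u$ ramifies $L|K$; this does not directly give a rank $1$ valuation, which signals that the honest argument must run the induction so that the ramified rank $1$ valuation is found on $K$ from the start.

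The main obstacle, therefore, is organizing the induction correctly so that the output is a rank $1$ valuation \emph{on $K$} rather than on a residue field, and supplying the precise ramification-theoretic compatibility at an unramified composition step (essentially: for $v=v_1\circ\bar v$ with $v_1$ unramified in $L|K$, one has $V_v(L|K)\cong V_{\bar v}(Lw_1|Kv_1)$ under the canonical isomorphism of decomposition groups). I expect to handle the induction by taking, among all geometric rank $1$ valuations $v_1$ appearing as the first step of \emph{some} decomposition of \emph{some} ramified geometric rank $d$ valuation, one that is itself ramified — existence being forced because if every such $v_1$ were unramified, descending to $Kv_1$ and invoking the inductive hypothesis would eventually produce, after at most $d-1$ steps, a ramified geometric rank $1$ valuation on an intermediate residue field, and tracing the unramified tower back up (each step an unramified composition, hence ramification-preserving in the other direction via Lemma~\ref{valdeco}(i) and the inertia compatibility) yields a ramified geometric rank $1$ valuation on $K$ after all. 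The verification that each ingredient is "geometric" is routine via Proposition~\ref{valuationtheory} and Lemma~\ref{valdeco}, and the residue-characteristic bookkeeping for $V$ versus $T$ is the same as in the Remark preceding the lemma.
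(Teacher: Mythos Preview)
Your proposal has a genuine gap in the direction (i)$\Rightarrow$(ii): the induction never closes because you have no mechanism for producing a ramified geometric rank~$1$ valuation \emph{on $K$} from a ramified valuation of higher rank. You correctly notice the problem yourself (``wait, that gives rank~$2$''), but the attempted fix in the last paragraph does not work. If $v_1$ is unramified and $u$ is a ramified geometric rank~$1$ valuation on $Kv_1$, then $v_1\circ u$ is a ramified rank~$2$ valuation on $K$; ``tracing back up'' through an unramified tower always raises the rank by one at each step, so you end up exactly where you started. There is no purely valuation-theoretic way to pass from a ramified rank~$2$ valuation on $K$ to a ramified rank~$1$ valuation on $K$: some geometric input is required.

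The missing ingredient is \emph{purity of the branch locus}. The paper handles the crucial case $d=2$ by choosing a proper normal model $\bar X$ of $K$, localizing at the center $x$ of the ramified rank~$2$ valuation (a closed point), desingularizing so that $A=\mathcal O_{\bar X,x}$ is a two-dimensional regular local ring, and then invoking Zariski--Nagata purity \cite[X.3.4]{sga2}: the branch locus of the normalization of $A$ in $L$ is pure of codimension~$1$, hence contains a prime divisor whose associated discrete rank~$1$ valuation is ramified in $L|K$. For $d>2$ one peels off $v_1$ exactly as you describe, obtains by induction a ramified geometric rank~$1$ valuation $v_2$ on $Kv_1$, forms the ramified rank~$2$ valuation $v_1\circ v_2$ on $K$, and then applies the $d=2$ case (not the inductive hypothesis on residue fields) to get a ramified rank~$1$ valuation on $K$. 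Your (ii)$\Rightarrow$(i) is also more complicated than necessary: once $v$ is a ramified geometric rank~$1$ valuation, compose it with \emph{any} geometric rank~$(d-1)$ valuation on $Kv$; the composite has rank~$d$ and is ramified because its inertia group contains $T_v(L|K)$.
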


\begin{proof}
(ii)$\Rightarrow$(i) Let $v$ be a ramified geometric discrete rank $1$ valuation. Choose a geometric discrete rank $(d-1)$ valuation $v_1$ on $Kv$ and put $w=v\circ v_1$; to find such a valuation $v_1$ one can use Parshin chains, see Section~\ref{tameness-section}. Then $w$ is a ramified discrete rank $d$ valuation.

(i)$\Rightarrow$(ii) We use induction on $d=\dim_S K$. For $d=1$ the lemma is trivial. In case $d=2$ and $v$ is a ramified discrete rank $2$ valuation of $K$ choose a proper normal model $X$ of $K$. Then consider the two-dimensional local ring $A=\O_{X,x}$ where $x$ is the generic point of the center of $v$ on $X$. After a modification we may assume that $A$ is regular, see Proposition~\ref{desing}. Since the normal closure of $A$ in $L$ is ramified over $A$, purity of the branch locus~\cite[X.3.4]{sga2} gives us a ramified divisor on $A$ which gives us a ramified geometric discrete rank $1$ valuation.
For $d>2$ write $v=v_1\circ w$ where $v_1$ is a discrete rank $1$ valuation, which is geometric according to Lemma~\ref{valdeco}~(ii). If $v_1$ ramifies in $L$ we are done. Otherwise
consider the finite product $L_v$ of the residue fields of all extensions of $v_1$ to $L$. Then, as $v$ ramifies in $L$ and as $v_1$ does not ramify in $L$,
one of the extensions of $w$ to $L_v$ ramifies (see~\cite{ZarSam}, Ch.~VI \S 11 Cor.~2 to Lem.~4).
Now we use the induction
assumption to find a geometric discrete valuation $v_2$ of $Kv_1$ of rank~$1$ which ramifies in $L_v$.  So $v_1\circ v_2$ is a geometric discrete rank~$2$ valuation on $K$ which ramifies in the extension $L|K$.
Using the induction assumption again, we find a geometric discrete rank $1$ valuation on $K$ which ramifies in $L|K$.
\end{proof}

\begin{lemma}\label{wild-purity}
Let $L$, $K$ and $d$ be as in Lemma~\ref{val-purity}.
Assume there exists a wildly ramified discrete rank $d$ valuation $v$ on $K$. Then there exists a wildly ramified geometric discrete rank $1$ valuation $w$ on $K$.
\end{lemma}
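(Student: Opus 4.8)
The plan is to reduce to a cyclic extension of prime degree $p$ and, in the essential (mixed characteristic) case, to produce a ramified divisor whose residue field has characteristic $p$, since ramification along such a divisor is automatically wild. Concretely, I would first pass to the Galois closure $M$ of $L|K$; by definition $v$ is wildly ramified in $M|K$, so some extension $w$ of $v$ to $M$ has $V_w(M|K)\ne 1$, and $p:=\mathrm{char}(Kv)>0$. As $V_w(M|K)$ is a $p$-group, choose a subgroup $P\subseteq V_w(M|K)$ of order $p$ and let $v'$ be the restriction of $w$ to $M^P$. Then $\dim_S M^P=d$, the valuation $v'$ is still discrete of rank $d$ (rank and $\dim_S$ do not change under finite extensions), and $V_{v'}(M|M^P)=V_w(M|K)\cap P=P\ne 1$, so $v'$ is wildly ramified in the cyclic extension $M|M^P$ of degree $p$. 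Moreover, if a wildly ramified geometric discrete rank $1$ valuation on $M^P$ inside $M|M^P$ is found, its restriction to $K$ is again geometric of rank $1$ (compare the Abhyankar equalities of Proposition~\ref{valuationtheory} along the finite extension $M^P|K$) and wildly ramified in $M|K$. So I may assume $M|K$ is cyclic of degree $p$. For such extensions the Remark following the definition of tame ramification shows that a valuation of residue characteristic $p$ is wildly ramified if and only if it is ramified, and that a ramified valuation which is not wildly ramified is totally tamely ramified with $e=p$ and residue characteristic $\ne p$. In particular, if $\mathrm{char}(K)=p$ every valuation of $K$ has residue characteristic $p$, so Lemma~\ref{val-purity} already provides a ramified, hence wildly ramified, geometric discrete rank $1$ valuation. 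The remaining case is $\mathrm{char}(K)=0$ with $\mathrm{char}(Kv)=p$.

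Next I would reduce to the case of a rank $2$ valuation, by induction on $d$. Decompose $v=v_1\circ\cdots\circ v_d$ with the $v_i$ geometric of rank $1$ (Lemma~\ref{valdeco}(ii)). If $v_1$ is wildly ramified, we are done. If $v_1$ is unramified, then it does not split (else $v$ would be unramified), so the residue extension $Mv_1|Kv_1$ is cyclic of degree $p$ and $v_2\circ\cdots\circ v_d$ is a wildly ramified discrete rank $d-1$ valuation on $Kv_1$; by induction $Kv_1$ carries a wildly ramified geometric rank $1$ valuation $w'$, and then $v_1\circ w'$ is a wildly ramified geometric rank $2$ valuation on $K$ (Lemma~\ref{valdeco}(i)). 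Finally, if $v_1$ is ramified but not wildly ramified, then $e_{v_1}=p$, its residue extension is trivial, and $\mathrm{char}(Kv_1)=0$ (a positive residue characteristic would force $\mathrm{char}(Kv)\ne p$); choosing any rank $1$ valuation $v'$ on $Kv_1$ of residue characteristic $p$, the composite $v_1\circ v'$ is a geometric rank $2$ valuation on $K$ of residue characteristic $p$ and of ramification index $p$, the factor $p$ arising already at $v_1$, hence $v_1\circ v'$ is wildly ramified. In all cases we obtain a wildly ramified geometric discrete rank $2$ valuation on $K$, so it suffices to treat rank $2$.

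For the rank $2$ case I would take a proper normal model $\bar X$ of $K$ and let $A$ be the local ring of $\bar X$ at the center of the rank $2$ valuation $v$; since $v$ is geometric of rank $2$, $A$ is a two-dimensional normal local ring, and $p\in\m_A$, $p\ne 0$, $\mathrm{char}(\mathrm{Frac}\,A)=0$. After a finite sequence of blow-ups and normalizations we may assume $A$ regular, by Proposition~\ref{desing}(i). Let $B$ be the normalization of $A$ in $M$. As $v$ is ramified in $M|K$, the closed point of $\Spec A$ lies on the branch locus of $\Spec B\to\Spec A$, which is a divisor $\Delta$ by purity \cite[X.3.4]{sga2}. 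If some component of $\Delta$ has residue field of characteristic $p$, then $M|K$ is wildly ramified along it and the associated geometric discrete rank $1$ valuation is the one we seek. Otherwise every component of $\Delta$ has residue characteristic $0$; using Proposition~\ref{desing}(ii) I would blow up further so that $\Delta\cup V(p)$ is a strict normal crossing divisor and then blow up the point where the branch component causing the ramification at $v$ meets $V(p)$. A local computation with the defining equation of the cover, in the spirit of Lemma~\ref{ram-crit} but tracking the valuation along the exceptional divisor $E$, shows that $M|K$ is ramified along $E$; since $E$ lies over a point of residue characteristic $p$ one has $\mathrm{char}(k(E))=p$, so this ramification is wild and $v_E$ is a wildly ramified geometric discrete rank $1$ valuation on $K$.

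The main obstacle is the two-dimensional mixed characteristic analysis in the last step: one has to show that the ramification of a cyclic degree $p$ cover cannot be confined to the characteristic-$0$ part of the branch locus while remaining wild at a rank $2$ valuation, i.e.\ that after suitable blow-ups the wild ramification becomes visible along a characteristic-$p$ exceptional divisor — this is precisely wild-purity in dimension two, and it relies on resolution for surfaces (Proposition~\ref{desing}). A secondary subtlety is the reduction step in which $v_1$ is only tamely ramified although $v$ is wild, where the wildness stems from a residue-characteristic jump further along the chain and one must check that re-attaching $v_1$ to a rank $1$ valuation of residue characteristic $p$ really produces a wildly ramified rank $2$ valuation of $K$.
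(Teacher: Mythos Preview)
Your strategy---pass to a cyclic degree-$p$ subextension $L'|K'$ of the Galois closure and then carry out a two-dimensional local analysis using surface resolution (Proposition~\ref{desing}) and purity of the branch locus---is the paper's strategy as well. The differences are in how the reduction to dimension two is organised and, crucially, in how the two-dimensional step is finished.

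Your induction on $d$ (splitting into the cases $v_1$ wild\,/\,unramified\,/\,tame) is correct in outline but more elaborate than necessary. The paper bypasses it: having reduced to $L'|K'$ cyclic of degree~$p$, it applies Lemma~\ref{val-purity} once to produce a ramified geometric rank-$1$ valuation $w'$ on $K'$, chooses a codimension-two point $x\in Z_{w'}$ of residue characteristic~$p$ on a proper normal model of $K'$, and localizes at $x$. Your route works too, but note two loose ends: in the ``$v_1$ tame'' case you should say why a \emph{geometric} rank-$1$ valuation on $Kv_1$ of residue characteristic~$p$ exists (use that the center of $v_2\circ\cdots\circ v_d$ on a proper model of $Kv_1$ is a characteristic-$p$ point and pick a prime divisor through it), and when you pass to the local ring at the center of your rank-$2$ valuation you must invoke Proposition~\ref{valuationtheory} to arrange that this local ring is genuinely two-dimensional.

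The substantive gap is your final step. After making the ramification locus on $\Spec(A)$ a normal crossing divisor, you propose a ``local computation in the spirit of Lemma~\ref{ram-crit}'' to show that blowing up an intersection of a characteristic-$0$ branch component with $V(p)$ forces ramification along the (characteristic-$p$) exceptional divisor. You do not carry this out, and you yourself flag it as the main obstacle. The paper closes exactly this gap with Abhyankar's Lemma \cite[XIII Proposition~5.2]{sga1} applied to the strict henselization of~$A$: if every ramified prime divisor on $\Spec(A)$ had function field of characteristic~$0$, the degree-$p$ cover would be tame along the NCD, and Abhyankar's Lemma would trivialize it after a Kummer base change by $p$-th roots of the local equations; but such a cover is then itself of Kummer type $T^p=\pi_1^{a_1}\cdots\pi_r^{a_r}$, and one checks directly that this is ramified along every component of $V(p)$---contradiction. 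So there must exist a ramified prime divisor with function field of characteristic~$p$, and restricting its valuation to $K$ gives the desired wildly ramified geometric rank-$1$ valuation. Replace your unspecified blow-up computation by this appeal to Abhyankar's Lemma.
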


\begin{proof} Let $p$ be the characteristic of $Kv$. We may replace $L|K$ by its Galois closure. Since the ramification group of $v$ is $p$-group,
there exists an intermediate extension $K\subset K' \subset L' \subset L$ such that $L'|K'$ is Galois of prime order $p$ and such that an extension $v'$ of $v$ to $K'$ is wildly ramified in $L'|K'$. Choose a proper normal model $X$ of $K'$ over $S$.
By the Lemma~\ref{val-purity}, we find a discrete rank $1$ valuation $w'$ of $K'$ which ramifies in $L'|K'$. Choose a point $x\in Z_{w'}$ of codimension $2$ in $X$ and of residue characteristic $p$. Set $A=\mathcal{O}_{X, x}$. By Proposition~\ref{desing}, we may
assume that $A$ is regular and that the ramification locus of $L'|K'$ in $\Spec (A)$ has normal crossings. Then
Abhyankar's Lemma~\cite[XIII Proposition 5.2]{sga1} applied
to the strict henselization of $A$ shows that there is a ramified prime divisor on $\Spec (A)$ of function field characteristic $p$. Let $w''$ be
the discrete rank $1$ valuation corresponding to this ramified prime divisor. Then the restriction $w$ of $w''$ to $K$ is a wildly ramified geometric discrete rank $1$ valuation.
\end{proof}

\section{Curve-, Divisor- and Chain-Tameness} \label{tameness-section}

We start with the following applications of our Key Lemma~\ref{key-lemma}.

\begin{proposition}\label{unram-ext} Let $X$ be a regular, pure-dimensional, excellent scheme, $X'\subset X$ a dense open subscheme,  $Y'\to X'$  an \'{e}tale covering and $Y$  the normalization of\/ $X$ in $k(Y')$.
Suppose that for every curve $C$ on $X$ with $C'=C\cap X'\ne\varnothing$, the \'{e}tale covering $Y'\times_{X} \tilde C' \to   X'\times_X \tilde C'$ extends to an \'{e}tale covering of\/ $\tilde  C$. Then $Y\to X$ is \'{e}tale.
\end{proposition}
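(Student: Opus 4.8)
The statement is a purity-type result: if an étale covering $Y' \to X'$ becomes unramified after pulling back to the normalization of every curve meeting $X'$, then it was already unramified, i.e.\ $Y \to X$ is étale. The plan is to argue by contradiction. Suppose $Y \to X$ is not étale. Since $X$ is regular, hence normal, the branch locus of $Y \to X$ is nonempty, and by purity of the branch locus (Zariski--Nagata, \cite[X.3.4]{sga2}) it is a divisor; in particular there is an irreducible component $D$ of $X \sm X'$ of codimension one in $X$ along whose generic point $Y' \to X'$ ramifies. Actually, it is cleaner to first localize: since étaleness is local, it suffices to show that $\O_{Y, y} \to$ is étale over $\O_{X,x}$ for each $x$; so we may replace $X$ by $\Spec \O_{X,x}$, which is a local, normal (indeed regular) and excellent ring. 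Then the hypothesis on curves passes to this localization, because any curve on $\Spec \O_{X,x}$ is the localization of a curve on $X$ (spread it out to an affine neighborhood).

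Having reduced to $X = \Spec(A)$ with $A$ local, normal, excellent, and $Y' \to X'$ ramified along the generic point of a codimension-one component $D$ of $X \sm X'$, I would like to apply the Key Lemma~\ref{key-lemma} directly — but that lemma requires $Y' \to X'$ to be a Galois covering of prime degree $p$. So the second step is a reduction to the prime-degree Galois case. Given any étale covering ramifying along $D$, pass to the Galois closure $\tilde Y' \to X'$ (normalization of $X$ in the Galois closure of $k(Y')|k(X)$); it still ramifies along $D$. Now the inertia group $T_D$ of $D$ in this Galois covering is nontrivial; pick a subgroup $H \subset T_D$ of prime index $p$ in $T_D$ — more precisely, choose a normal subgroup of the decomposition group so that the corresponding subextension of degree $p$ is Galois over some intermediate field and is ramified along (a point over) $D$. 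Concretely: let $G$ be the Galois group, $T \subset G$ the inertia of a prime over $D$, and choose a subgroup $N \lhd T$ with $T/N$ cyclic of order $p$; replacing $X$ by (a suitable localization of the normalization of $X$ in) the fixed field of $N \rtimes (\text{decomposition})$, we get an étale Galois covering of prime degree $p$, still ramified along the chosen divisor over $D$, and the curve hypothesis is inherited (if $Y' \times_{X'} \tilde C'$ extends étale over $\tilde C$, so does any subcovering). This manoeuvre is essentially the standard one for reducing ramification questions to $\Z/p$-coverings and I would spell it out carefully.

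Once in the prime-degree Galois case, the Key Lemma~\ref{key-lemma} applies verbatim: it produces a curve $C$ on $X$ with $C' = C \cap X' \ne \varnothing$ such that the base change $Y' \times_{X'} \tilde C' \to \tilde C'$ is ramified along a point of $\tilde C \sm \tilde C'$. In particular this base change does \emph{not} extend to an étale covering of $\tilde C$ — the existence of a ramified point over $\tilde C \sm \tilde C'$ is exactly an obstruction to such an extension, since an étale extension would have unramified, hence trivial, inertia there. But this directly contradicts the hypothesis of the proposition. Hence $Y \to X$ is étale, as claimed.

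\textbf{Main obstacle.} The heart of the matter, the two-dimensional ramification analysis, is already packaged in the Key Lemma, so the real work here is the bookkeeping in the two reduction steps: (a) checking that localizing $X$ at a point preserves both the normality/excellence hypotheses and the ``for every curve'' hypothesis — the latter needs that curves on the local scheme spread out to curves on $X$ and that the relevant fibre products of normalizations commute with this localization; and (b) the passage to a prime-degree Galois subcovering while keeping track of which divisor over $D$ stays ramified and verifying that the curve hypothesis descends to the subcovering. Step (b) is where one must be a little careful that the chosen intermediate covering is genuinely Galois over the relevant base and that ramification along $D$ is not lost when passing to the normalization of $X$ in the smaller field; I expect this to be the part requiring the most attention, though it involves no deep input beyond standard inertia-group manipulations.
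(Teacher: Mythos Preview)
Your overall strategy matches the paper's: argue by contradiction, invoke purity of the branch locus to find a ramified codimension-one component $D$, reduce to a Galois covering of prime degree, and apply the Key Lemma to produce a curve contradicting the hypothesis.

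However, your step~(b) is both overcomplicated and not quite right as written. You try to manufacture a degree-$p$ Galois subcovering by choosing a normal subgroup $N \lhd T$ with $T/N$ cyclic and then taking fixed fields under something like ``$N \rtimes (\text{decomposition})$''; this construction is unclear (what is the ambient group for the semidirect product, and why would the resulting extension be Galois of degree $p$ over the new base?). The paper's manoeuvre is simpler and goes in the opposite direction: after replacing $Y'$ by its Galois closure over $X'$, pick a cyclic subgroup $G$ of prime order $p$ \emph{inside} the inertia group of a point over the generic point of $D$, and form the quotient $Y'_G := Y'/G$. Then $Y' \to Y'_G$ is automatically Galois with group $G \cong \Z/p$, and it is ramified along the preimage of $D$ in $Y_G$ (the normalization of $X$ in $k(Y'_G)$) precisely because $G$ sits in the inertia. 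One applies the Key Lemma with base a localization of $Y_G$ at a closed point over $D$---not a localization of $X$---to obtain a curve $C_G$ on $Y_G$ along which $Y' \to Y'_G$ ramifies, and finally pushes $C_G$ forward to a curve $C$ on $X$. The curve hypothesis on $X$ enters only at this last step: if $Y' \times_X \tilde C' \to \tilde C'$ extended \'etale to $\tilde C$, then its further base change along $\tilde C_G \to \tilde C$ would also be \'etale, contradicting what the Key Lemma produced. This eliminates the need for your step~(a) (no separate localization of $X$) and makes the bookkeeping you flagged in~(b) essentially trivial.
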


\begin{proof} Without loss of generality we can assume that $Y'\to X'$ is a Galois covering.
Assume $Y\to X$ were not \'{e}tale. We have to find a curve $C$ on $X$ with $C'=C\cap X'\ne\varnothing $ such that $Y'\times_{X'} \tilde C'\to \tilde C'$ is ramified along $\tilde C\sm \tilde C'$. By the purity of the branch locus
\cite[X.3.4]{sga2}, there exists a component $D$ of $X\sm X'$ of codimension one in~$X$ such that $Y\to X$
is ramified over the generic point of~$D$. Let $G$ be a cyclic subgroup of prime order of the inertia group of some point of~$Y$ which lies over the
generic point of~$D$. Let $Y'_G$ be the quotient of $Y'$ by the action of $G$. Consider the Galois covering
$Y' \to Y'_G$ of prime degree and let $Y_G$ be the normalization of $X$ in
$k(Y'_G)$. By considering the localization at any closed point of $Y_G$ lying over~$D$, Lemma~\ref{key-lemma} produces a curve $C_G$ on $Y_G$ with $C'_G=C_G \cap Y'_G\ne\varnothing$ such that $Y'\times \tilde C'_G \to \tilde C'_G$
is ramified along $\tilde C_G\sm \tilde C'_G$. Let
$C$ be the image of $C_G$ under the morphism $Y_G\to X$. Then $C$ is the curve we are looking for.
\end{proof}

\begin{proposition}\label{Groth-Wies}
Let $X$ be a normal, pure-dimensional, excellent scheme and let $D$ be a NCD on $X$ (in particular, $X$ is regular in a neighbourhood of $D$). Then an \'{e}tale covering  $Y'\to X':=X\sm D$ is tamely ramified along $D$ if and only if for each closed curve $C\subset X$ not contained in $D$ the base change $Y'\times_X  \tilde C' \to \tilde C'$ is tamely ramified along~$D_{\tilde C}$.
\end{proposition}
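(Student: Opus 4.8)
The plan is to prove the two directions separately, with the forward direction being essentially trivial and the reverse direction being the substantial one where the Key Lemma enters. For the forward implication, suppose $Y'\to X'$ is tamely ramified along $D$. Fix a closed curve $C\subset X$ not contained in $D$, with $C':=C\cap X'\neq\varnothing$, and a point $x\in\tilde C\smallsetminus\tilde C'$, lying over a point $\bar x\in D$. Tameness is a statement about the completed local rings along the divisor $D$, so one restricts to the strict henselization (or completion) of $\mathcal{O}_{X,\bar x}$, where $D$ becomes a union of coordinate hyperplanes and, by Abhyankar's Lemma \cite[XIII Proposition 5.2]{sga1}, the covering $Y'\to X'$ is, after a tamely ramified base change, a product of Kummer coverings $t_i\mapsto t_i^{n_i}$ with each $n_i$ prime to the residue characteristic. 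Pulling back such a standard Kummer covering along the curve $C$, the ramification of $Y'\times_X\tilde C'\to\tilde C'$ at $x$ is governed by the orders of vanishing along $C$ of the local equations of the branches of $D$ through $\bar x$; these are finite, and the resulting ramification index divides a product of the $n_i$, hence is again prime to the residue characteristic. So $Y'\times_X\tilde C'\to\tilde C'$ is tame at $x$.

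For the reverse implication, the key point is the contrapositive: if $Y'\to X'$ is \emph{not} tamely ramified along $D$, I must produce a closed curve $C\subset X$, not contained in $D$, such that $Y'\times_X\tilde C'\to\tilde C'$ is \emph{not} tamely ramified along $D_{\tilde C}$. First reduce to a Galois situation and, since wild ramification along the generic point $\eta_D$ of $D$ means the ramification group $V_{\eta_D}$ is a nontrivial $p$-group (with $p$ the residue characteristic at $\eta_D$), pass to a subcovering $Y'\to Y'_G$ of prime degree $p$ which is wildly ramified along the generic point of the corresponding component $D$ downstairs in $Y'_G$ — exactly the setup of the Key Lemma~\ref{key-lemma} applied to the local ring of $Y'_G$ at a suitable closed point lying over $D$. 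The Key Lemma then yields a curve $C_G$ whose normalization pulls the degree-$p$ covering back to something ramified along $\tilde C_G\smallsetminus\tilde C'_G$; moreover, inspecting the proof of the Key Lemma, the curves produced there have ramification index $e$ equal to $p$ in cases 1 and 2a, and $e=1$ with inseparable residue extension in case 2b — in all cases the pullback covering is \emph{wildly} ramified, not merely ramified. Taking the image $C$ of $C_G$ in $X$ gives a curve on $X$ with the same wild ramification on the original covering $Y'\times_X\tilde C'\to\tilde C'$, by base-change invariance of étale morphisms, and $C$ is not contained in $D$ since its generic point maps to the generic point of $D$ but $C_G$ was chosen meeting $Y'_G$.

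The main obstacle is ensuring that the curve delivered by the Key Lemma witnesses \emph{wild} rather than merely nontrivial ramification, and that passing between $Y'$, $Y'_G$, and $X$ preserves this. The Key Lemma as stated only guarantees ramification; the fix is to note that it was proved by the trichotomy on ramification indices, and in each case the curve $C$ constructed makes $K'_C|K_C$ either totally ramified of degree $p$ (so wild, since $p$ is the residue characteristic) or residually inseparable of degree $p$ (again wild). One must also be slightly careful that the closed curve $C\subset X$ one finally extracts is genuinely a \emph{closed} subscheme of $X$ (not just a curve on some localization): this is handled exactly as in the proof of Proposition~\ref{unram-ext}, by spreading out from the local construction and taking the scheme-theoretic image of a closed curve on the relevant blow-up, then using properness of the blow-up morphism to land back inside $X$. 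A final minor point is the possibility of more than one component of $D$ and the compatibility of the notion of tameness "along $D$" with localization at the generic point of a single component, which is immediate from the definition via the associated discrete valuations.
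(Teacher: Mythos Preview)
Your proposal is correct and follows essentially the same route as the paper: Abhyankar's Lemma for the forward direction, and for the converse the contrapositive via Galois closure, passage to a prime-degree-$p$ subquotient $Y'\to Y'_G$, application of the Key Lemma at a closed point of $Y_G$ over the offending component, and projection of the resulting curve back to $X$. One small remark: your concern that the Key Lemma only yields \emph{ramification} rather than \emph{wild} ramification is legitimate, but your fix (inspecting the case analysis inside the Key Lemma's proof) is heavier than needed. Since $G$ was chosen inside the wild ramification group, the residue characteristic at the generic point of the relevant component of $D$ is $p$, hence so is the residue characteristic at every closed point in its closure and at the boundary point of the curve produced; a ramified degree-$p$ covering at a point of residue characteristic $p$ is automatically wild. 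This is the implicit reasoning the paper uses when it asserts without comment that the base change is ``not tamely ramified.'' Also, your parenthetical ``its generic point maps to the generic point of $D$'' is garbled---the correct reason $C\not\subset D$ is precisely the one you give next, that $C_G$ meets $Y'_G$.
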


\begin{proof}
If $Y'\to X':=X\sm D$ is tamely ramified along $D$ and $C$ is a curve on $X$ we use Abhyankar's Lemma to show that
$Y'\times_X  \tilde C' \to \tilde C'$ is tamely ramified
along $D_{\tilde C}$. Let $x$ be a point of the intersection $D\cap C$ and $A$ the strict henselization of $\mathcal{O}_{X, x}$. We can replace $X$ by $\Spec (A)$. Then $|D|=V(\pi_1)\cup \cdots \cup
V(\pi_r) $ for some irreducible elements $\pi_1,\ldots , \pi_r\in A$ which are part of a parameter system of $A$ and $X'=\Spec(A')$ with
$A'= A_{\pi_1 \cdots \pi_r}$.
By Abhyankar's Lemma~\cite[XIII Proposition 5.2]{sga1}, we can assume without loss of generality that $Y'\to X'$ is a Galois covering of the form
\[
\Spec(A'[T_1,\ldots ,T_r]/(T_1^{n_1}-\pi_1,\ldots , T_r^{n_r}-\pi_r ) ) \to \Spec(A')
\]
for natural numbers $n_1,\ldots ,n_r$ which are prime to the residue characteristic $p$ of~$A$. So, in particular, $p$ does not divide $\deg(Y'|X')=n_1\cdots  n_r$. In this situation the degree of the Galois covering $Y'\times_X  \tilde C' \to \tilde C'$ divides  $\deg (Y'|X')$, so it must be tamely ramified along $D_{\tilde C}$.

For the reverse direction assume that $Y'\to X':=X\sm D$ is not tamely ramified along some irreducible component $D_0$ of $D$. We will construct
a curve $C$ on $X$ such that the base change $Y'\times_X  \tilde C' \to \tilde C'$ is not tamely ramified
along $D_0 \times_X \tilde C$.
We can replace $Y'$ by its Galois closure over $X'$. Let $G$ be a subgroup of prime order
of the wild ramification group of $Y'\to X'$ at some point over the generic point of $D_0$. Let $Y'_G$ be the quotient of $Y'$ under the action of $G$
and let $Y_G$ be the normalization of $X$ in $k(Y'_G)$. Then a localization of $Y'\to Y'_G\subset Y_G$ at a closed point of $Y_G$ lying over
$D_0$  satisfies the
assumptions of Lemma \ref{key-lemma}, so that we find a curve $C_G\subset Y_G$ with $C'_G=C_G\cap Y'_G \ne\varnothing$ such that $Y'\times \tilde C_G' \to \tilde C'_G$ is not
tamely ramified along $D_0\times_X \tilde C_G$. Finally, let the curve $C$ we are searching for be the image of $C_G$ under the morphism $Y_G\to X$.
\end{proof}

Let $S$ be an integral, pure-dimensional and excellent base scheme and $\Sch(S)$ the category of separated schemes of finite type over $S$.

\medskip
We call $C\in \Sch(S)$ a {\em curve} if $C$ is integral and $\dim_S C=1$. For a regular curve $C\in \Sch(S)$ there exists a unique regular curve $P(C)\in \Sch(S)$ which is proper over $S$ and contains $C$ as a dense open subscheme.  Note that $P(C)$ has Krull-dimension~$1$. So there is a unique notion of tameness for \'{e}tale coverings of regular curves in $\Sch(S)$. The next definition is motivated by Proposition~\ref{Groth-Wies}. It is the \lq maximal\rq\ definition of tameness which is stable under base change and extends the given one for curves.

\begin{definition}
Let $Y\to X$ be an \'{e}tale covering  in~$\Sch(S)$. We say that $Y\to X$ is {\em curve-tame} if for any morphism $C\to X$ in $\Sch(S)$ with $C$ a regular curve, the base change $Y\times_X  C \to  C$ is tamely ramified along $P(C)\sm C$.
\end{definition}

Below we will introduce versions of tameness which use Parshin chains on schemes. For the convenience of the reader we recall the definition of Parshin
chains and their connection with valuations.

\begin{definition}
Let $X\in \Sch(S)$ be a scheme. A finite family of points $P=(P_0, \ldots , P_r)$ on the scheme $X$ is called a {\em chain} if
\[
\overline{\{ P_0 \} } \subset \overline{\{ P_1 \} } \subset \cdots  \subset   \overline{\{ P_r \}}.
\]
The chain $P$ is called a {\em Parshin chain} if $\dim_S \overline{\{ P_i \} } = i$ for $0\le i\le r$.
\end{definition}

Assume we are given a scheme $X\in\Sch(S)$ and a Parshin chain $P=(P_0, \ldots , P_d)$ of length $d=\dim_S (X)$ on $X$. We say that a discrete valuation $v\in\Val_S(k(X))$
of rank $d$ {\em dominates} the Parshin chain $P$ if, for the unique decomposition $v=v_1\circ \cdots \circ v_d$ into discrete valuations of rank $1$ and $i=1,\ldots ,d$,  the valuation ring corresponding to $v_1\circ \cdots \circ v_i$ dominates $\mathcal{O}_{X,P_{d-i}}$. A Parshin chain $P$ is dominated by at least one  and by at most finitely many discrete valuations. If, for $i=1,\ldots , d$, $P_{i-1}$ is a regular point on $\overline{\{P_{i}\}}$, then there is exactly one such discrete valuation.

\bigskip
We introduce further definitions of tameness. Let $Y \to X$ be an \'{e}tale covering of connected normal schemes in $\Sch(S)$.
Then every $v\in \Val_S(k(X))$ with center in $X$ is unramified in $k(Y)|k(X)$.

\begin{definition}
Assume that $Y$ and $X$ are connected and normal. We say that $Y\to X$ is \smallskip
\begin{compactitem}
\item {\em divisor-tame} if for every normal compactification $\bar X$ of $X$ and every point $x\in \bar X \sm X$ with $\text{codim}_{\bar X} x=1$ the discrete rank one valuation $v_x$ on $k(X)$ associated with $x$ is tamely ramified in $k(Y)|k(X)$.
\item {\em valuation-tame} if every $v\in \Val_S(k(X))$ is tamely ramified in $k(Y)|k(X)$.
\item  {\em discrete-valuation-tame}, if  every discrete valuation $v \in \Val_S(k(X))$ of rank $d=\dim_S X$ is tamely ramified in $k(Y)|k(X)$.
\item  {\em chain-tame} if there exists a normal compactification $\bar X$ of $X$ such that each discrete valuation $v \in \Val_S(k(X))$ of rank $d=\dim_S X$ which dominates a Parshin-chain on $\bar X$ is tamely ramified in $k(Y)|k(X)$.
\item {\em weakly chain-tame} if there exists a normal compactification $\bar X$ of $X$ such that each discrete valuation $v \in \Val_S(k(X))$ of rank $d=\dim_S X$ which dominates a Parshin-chain $P=(P_0,\ldots, P_d)$ on $\bar X$ with $P_1,\ldots, P_d \in X$ is tamely ramified in $k(Y)|k(X)$.
\end{compactitem}

\smallskip\noindent
These definitions extend to the non-connected case by requiring the corresponding property for every connected component.
\end{definition}

Valuation-tameness obviously implies discrete-valuation-tameness and divisor-tameness. Discrete-valuation-tameness implies chain-tameness, which implies weak chain-tameness.

\begin{remark} \label{abghaengremark}
The question whether an \'{e}tale scheme morphism $Y\to X$ is tame  or not (in any of the above versions), depends on the category $\Sch(S)$ in which it is considered. For example, the \'{e}tale morphism $\Spec(\Z[\frac{1}{2},\sqrt{-1}])\to \Spec(\Z[\frac{1}{2}])$ is not tame in $\Sch(\Z)$, but is tame as a morphism in $\Sch(\Z[\frac{1}{2}])$. Another example is the following: any \'{e}tale covering $Y\to X$ of varieties over $\Q_p$ is tame when considered in $\Sch(\Q_p)$. This is in general not the case if we consider $Y\to X$ as a covering in $\Sch(\Z_p)$.
\end{remark}

Now we formulate our first main result.

\begin{theorem}\label{triangle-theo}
Let $S$ be an integral, excellent and pure-dimensional base scheme and let $Y\to X$ be an \'{e}tale covering of regular schemes in $\Sch(S)$. Then the following are equivalent:

\smallskip
\begin{compactitem}
\item[\rm (i)] $Y\to X$ is curve-tame.
\item[\rm (ii)] $Y\to X$ is divisor-tame.
\item[\rm (iii)] $Y\to X$ is discrete-valuation-tame.
\item[\rm (iv)] $Y\to X$ is chain-tame.
\item[\rm (v)] $Y\to X$ is weakly chain-tame.
\end{compactitem}

\smallskip\noindent
If\/ every intermediate field between $k(X)$ and the Galois closure of $k(Y)$ over $k(X)$ admits a regular proper model, then  then {\rm (i)--(v)} are equivalent to

\smallskip
\begin{compactitem}
\item[\rm (vi)] $Y\to X$ is valuation-tame.
\end{compactitem}

\smallskip\noindent
If there exists a regular compactification $\bar X$ of\/ $X$ such that $\bar X\sm X$ is a NCD, then {\rm (i)--(vi)} are equivalent and there
is  a further equivalent condition:

\smallskip
\begin{compactitem}
\item[\rm (vii)] $Y\to X$ is tamely ramified along $\bar X \sm X$.
\end{compactitem}
\end{theorem}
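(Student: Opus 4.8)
The plan is to establish the chain of implications
\[
\text{(iii)} \Rightarrow \text{(ii)} \Rightarrow \text{(i)} \Rightarrow \text{(v)} \Rightarrow \text{(iii)},
\]
together with (iii)$\Rightarrow$(iv)$\Rightarrow$(v) (which are trivial from the remarks preceding the theorem), then handle (vi) and (vii) separately. Throughout I may replace $k(Y)|k(X)$ by its Galois closure, since tameness of an étale covering is defined via the Galois closure and all the valuation-theoretic notions are insensitive to this; write $G$ for the Galois group. The two genuinely substantive inputs will be the Key Lemma~\ref{key-lemma} (for passing from ramification along a divisor to ramification on a curve) and the valuation-theoretic Lemmas~\ref{valdeco}, \ref{val-purity} and \ref{wild-purity} (for trading discrete rank~$d$ valuations against discrete rank~$1$ valuations and vice versa).

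\smallskip
\emph{(iii)$\Rightarrow$(ii).} Suppose $Y\to X$ is not divisor-tame: some normal compactification $\bar X$ has a codimension-one point $x\in\bar X\sm X$ at which the rank-one valuation $v_x$ is wildly ramified in $k(Y)|k(X)$. By Lemma~\ref{valdeco}(i), composing $v_x$ with a geometric discrete rank $(d-1)$ valuation on its residue field (which exists, e.g.\ via a Parshin chain on a model of $k(X)v_x$) produces a geometric discrete rank $d$ valuation $v$ on $k(X)$; since $v_x$ is a composition factor of $v$ and is wildly ramified, $v$ is wildly ramified as well (the ramification group of $v$ surjects onto a subquotient containing that of $v_x$, or one argues directly that wild ramification of a composition factor forces wild ramification of the composite). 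Thus $Y\to X$ fails discrete-valuation-tameness. \emph{(ii)$\Rightarrow$(i).} Given a morphism $C\to X$ from a regular curve, one reduces—after normalizing $C$ and replacing $C$ by a suitable affine neighbourhood—to applying Proposition~\ref{Groth-Wies} on a normal model $X$ of $k(X)$ equipped with the divisor $\bar X\sm X$; here one first uses Proposition~\ref{desing} (or resolution in dimension $\le 2$ plus a generic argument) to arrange that the relevant part of $\bar X\sm X$ is a NCD in a neighbourhood of the image of $C$, and divisor-tameness then supplies exactly the hypothesis of the ``only if'' direction of Proposition~\ref{Groth-Wies}, whence $Y\times_X C\to C$ is tame.

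\smallskip
\emph{(i)$\Rightarrow$(v)}, equivalently the contrapositive: if $Y\to X$ is not weakly chain-tame, produce a bad curve. Fix a normal compactification $\bar X$. By hypothesis there is a discrete rank $d$ valuation $v$ dominating a Parshin chain $P=(P_0,\dots,P_d)$ with $P_1,\dots,P_d\in X$, which is wildly ramified. The condition $P_1,\dots,P_d\in X$ means the ``bad behaviour'' is concentrated at the closed point $P_0\in\bar X\sm X$, i.e.\ $v$ has center a codimension-one (in the relevant local ring) component of the boundary. Localizing $\bar X$ at $P_1$—a point of $X$, hence of a regular scheme—and using Lemma~\ref{wild-purity} to replace the wild rank $d$ valuation by a wildly ramified \emph{geometric discrete rank one} valuation $w$ on $k(X)$, we obtain a codimension-one component $D$ of the complement of $X$ in some normal model along which $Y$ (i.e.\ the normalization of that model in $k(Y)$) is wildly ramified. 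Now the Key Lemma~\ref{key-lemma}, applied to the localization at a closed point of the normalization lying over $D$ after passing to a prime-order subquotient of the wild inertia (as in the proof of Proposition~\ref{unram-ext}/Proposition~\ref{Groth-Wies}), yields a curve $C$ on that model with $C\cap X\ne\varnothing$ such that $Y'\times\tilde C'\to\tilde C'$ is (wildly, hence non-tamely) ramified at a point of $\tilde C\sm\tilde C'$; its image in $X$ is the required curve contradicting curve-tameness. \emph{(v)$\Rightarrow$(iii)} is the reverse: a wildly ramified discrete rank $d$ valuation $v$ on $k(X)$ has, by Lemma~\ref{wild-purity}, a wildly ramified geometric discrete rank one valuation $w$ underneath it; choosing a normal compactification $\bar X$ in which the center of $w$ is a boundary divisor and completing $w$ to a rank $d$ valuation dominating a Parshin chain $P$ with $P_1,\dots,P_d\in X$ (possible since the residue field of $w$ is a function field over $S$ admitting a proper model, and one extends to a Parshin chain all of whose higher members can be taken inside $X$), we violate weak chain-tameness. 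The main obstacle is exactly the bookkeeping in these two directions: guaranteeing that the Parshin chain can be chosen with $P_1,\dots,P_d$ inside $X$ while its domination forces wild ramification at the boundary point $P_0$, and conversely that the Key Lemma's output curve genuinely meets $X'$ and reads off the wildness—both require carefully threading the desingularization steps of Proposition~\ref{desing} and the composition/decomposition calculus of Lemma~\ref{valdeco}.

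\smallskip
\emph{Condition (vi).} Valuation-tameness trivially implies divisor- and discrete-valuation-tameness, so it suffices to prove (iii)$\Rightarrow$(vi) under the extra hypothesis that every intermediate field $K\subset L'\subset \tilde L$ (where $\tilde L$ is the Galois closure of $k(Y)$) admits a regular proper model. Given an arbitrary nonarchimedean $v\in\Val_S(k(X))$ with, say, wild ramification in $\tilde L|k(X)$, pass to a prime-degree subextension $L'|K'$ of intermediate fields at which an extension of $v$ is wildly ramified; by hypothesis $K'$ has a regular proper model $X'$, on which $v$ has a center $Z$, and $L'|K'$ is wildly ramified along a codimension-one point specializing to (the generic point of) $Z$—by purity of the branch locus applied on a regular model, after blowing up to make the branch locus a NCD, Abhyankar's Lemma shows wild ramification propagates to a codimension-one point, i.e.\ to a discrete rank one valuation; compose it up to a rank $d$ valuation as before to contradict (iii). \emph{Condition (vii).} When a regular compactification $\bar X$ with $\bar X\sm X$ a NCD exists, tameness along $\bar X\sm X$ is by definition tameness at the rank-one valuations of the generic points of the boundary divisor, which is implied by divisor-tameness, hence by (ii); conversely, given tameness along this particular NCD boundary, Abhyankar's Lemma (as in the ``if'' direction of Proposition~\ref{Groth-Wies}) shows the covering is curve-tame, closing the loop with (i). The only point needing care here is that divisor-tameness quantifies over \emph{all} normal compactifications whereas (vii) fixes one; but the implication (vii)$\Rightarrow$(i)$\Rightarrow\cdots\Rightarrow$(ii) routes around this, since (i)–(v) have already been shown equivalent without any compactification hypothesis.
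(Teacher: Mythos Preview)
Your cycle runs opposite to the paper's, and two of its links do not close; as a result you never establish any implication back to (i).

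\textbf{The gap in (ii)$\Rightarrow$(i).} You invoke Proposition~\ref{Groth-Wies}, but that proposition needs the boundary to be a normal crossing divisor in a neighbourhood of the curve's limit points. Your suggestion to ``use Proposition~\ref{desing} (or resolution in dimension $\le 2$ plus a generic argument)'' does not repair this: Proposition~\ref{desing} desingularizes two-dimensional schemes, whereas the points you must analyse lie on the full $d$-dimensional compactification $\bar X$. There is no evident way to cut down to a surface here.

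\textbf{The gap in (v)$\Rightarrow$(iii).} Negating weak chain-tameness is a \emph{universal} statement over compactifications: you must exhibit, on \emph{every} normal $\bar X$, a wild rank~$d$ valuation dominating a Parshin chain with $P_1,\dots,P_d\in X$. Your argument blows up to make the center of your rank~$1$ valuation a divisor, thereby producing a bad chain on one particular $\bar X$; this does not rule out the existence of some other $\bar X$ witnessing (v).

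Tallying what survives: your (iii)$\Rightarrow$(ii) is correct (and pleasantly direct, via $V_{v_x}\subset V_{v_x\circ w}$), and the argument you label (i)$\Rightarrow$(v) is correct but in fact only uses ``not (v) $\Rightarrow$ some wild rank~$d$ valuation exists'', i.e.\ it proves (i)$\Rightarrow$(iii). Together with the trivial (iii)$\Rightarrow$(iv)$\Rightarrow$(v) you obtain (i)$\Rightarrow\{$(ii),(iii),(iv),(v)$\}$, but no arrow returns to (i).

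The paper runs the cycle the other way, (i)$\Rightarrow$(ii)$\Rightarrow$(iii)$\Rightarrow$(iv)$\Rightarrow$(v)$\Rightarrow$(i), and the step you are missing is (v)$\Rightarrow$(i), which is proved \emph{directly} and does not use the Key Lemma. Given a curve $C\to X$ whose base change is wild at some $c\in P(C)\sm C$, extend to $P(C)\to\bar X$ for the \emph{given} $\bar X$, let $x\in X$ be the image of the generic point of $C$, and restrict $v_c$ to a wild rank~$1$ valuation $w$ on $k(x)$ with center $P_0\in\bar X$. Since $X$ is regular at $x$, one can choose a Parshin chain $(P_0,P_1{=}x,P_2,\dots,P_d)$ with all $P_i$ for $i\ge 1$ in $X$; composing the associated rank~$(d-1)$ valuation with $w$ gives a wild rank~$d$ valuation dominating this chain. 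This works for \emph{any} $\bar X$, which is exactly what negating (v) demands. The Key Lemma is used only for (i)$\Rightarrow$(ii).

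Your treatment of (vi) and (vii) is along the right lines but presupposes the (i)--(v) equivalence; once the cycle is closed as above, those parts go through essentially as you wrote (the paper additionally routes (vii)$\Rightarrow$(vi) through numerical tameness to get (vi) without the regular-model hypothesis in the NCD case).
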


\begin{proof}
We start making the round. (i)$\Rightarrow$(ii) follows from the Key Lemma~\ref{key-lemma}. Let us show (ii)$\Rightarrow$(iii). Assume there exists a wildly ramified discrete valuation of rank $d=\dim_S X$. Using Lemma~\ref{val-purity}, we find a wildly ramified geometric discrete rank $1$ valuation $v$ on $k(X)$. Choose any normal compactification $\bar X$ of $X$.  As $Y\to X$ is \'{e}tale, the center of $v$ lies in $\bar X\sm X$. By \cite{Liu} \S 8, ex.\ 3.14, after blowing up $\bar X$ in centers contained in $\bar X \sm X$ and finally normalizing, we find a normal compactification $\bar X$ of $X$ such that $v$ is the valuation associated to a point $x\in \bar X\sm X$ of codimension $1$ in $\bar X$. This shows (ii)$\Rightarrow$(iii).

Furthermore, (iii)$\Rightarrow$(iv) and  (iv)$\Rightarrow$(v) are obvious. So assume (iv). Let $\bar X$ be a compactification of $X$ and  let $C \to X$ be a morphism in $\Sch(S)$, where $C$ is a regular curve. If the image of $C$ in $X$ is a closed point, then $Y\times_X C \to C$ extends to an \'{e}tale morphism of $P(C)$. So assume that the image $x$ of the generic point of $C$ in $X$ is one-dimensional and that the base change $Y\times_X C \to C$ is wildly ramified along $P(C)\sm C$. By the valuative criterion of properness, $C\to X$ extends to a morphism $P(C)\to \bar X$.
Let $v$ be a discrete rank $1$ valuation on $k(C)$ with center in $P(C)\sm C$ which is wildly ramified in $Y\times_X C \to C$. Let $w$ be the restriction of $v$ to $k(x)$ with respect to the inclusion $k(x)\subset k(C)$. Then $w$ is wildly ramified in $Y\otimes k(x)\to k(x)$. Let $P_0\in \bar X$ be the center of $w$. Since $x$ is a regular point on $X$, we find a Parshin chain $P_0,P_1,\ldots, P_d$ on $\bar X$ such that $P_1=x$ and $P_i$ is a regular point on $\overline{P_{i+1}}$ for $i=1,\ldots, d-1$. Then let $W$ be the geometric rank $d-1$-valuation associated to $P_1,\ldots, P_d$. Then $W\circ w$ is a wildly ramified discrete rank $d$-valuation. This shows (v)$\Rightarrow$(i).

Obviously, (vi) implies (iii). Now assume that (v) holds and that every intermediated field between $k(X)$ and the Galois closure of $k(Y)$ over $k(X)$ admits a regular proper model. In order to show (vi), we may replace $Y$ by its Galois closure over $X$.  Assume that there exists a valuation $v\in \Val_S(k(X))$ which is wildly ramified in $k(Y)|k(X)$. Let $p$ be the residue characteristic of $v$ and let $H\subset G(k(Y)|k(X))$ be a cyclic subgroup of order $p$ contained in the ramification group if $v$. We replace $X$ by the quotient scheme $Y/H$, i.e.\ we may assume that $Y|X$ is cyclic of order $p$. Let $X'$ be a regular proper model of $k(X)$ and let $x'$ be the center of $v$ in $X'$. Then $x'$ is ramified in $Y'\to X'$. By purity of the branch locus, we find a prime divisor $D\subset X'$ which contains $x'$ and is ramified in  $Y'\to X'$. Now choose a Parshin-chain from $x$ to $D$ and a discrete rank $d$ valuation which do\-mi\-nates this chain. This discrete rank $d$ valuation is wildly ramified in $k(Y)|k(X)$. Hence (iii) is violated.

Finally, assume that there exists a regular compactification $\bar X$ such that $\bar X \sm X$ is a normal crossing divisor. Then (vii) is equivalent to (i) by Proposition~\ref{Groth-Wies}. In the next section we will see (Theorem~\ref{numcrit}) that in this situation $Y\to X$ is numerically tame along $\bar X \sm X$, and therefore valuation-tame by Theorem~\ref{ntimpliesvaltame}. So (vii) implies (vi).
\end{proof}

\begin{remark}
If the scheme $X$ in Theorem~\ref{triangle-theo} is only assumed to be normal instead of regular, one still gets the implications
\[
\mathrm{(i)}\Rightarrow  \mathrm{(ii)}\Leftrightarrow \mathrm{(iii)} \Rightarrow   \mathrm{(iv)}   \Rightarrow   \mathrm{(v)}  .
\]
\end{remark}

\section{Numerical tameness}

Let $X$ be a connected normal scheme with function field $K=k(X)$ and let $L|K$ be a finite Galois extension with Galois group $G$. Let $Y$ be the normalization of $X$ in $L$ and let $y\in Y$ be a (not necessarily closed) point with image $x\in X$. Recall that the decomposition and the inertia group $G_y(Y|X)$ and $T_y(Y|X)$ of $y$ in $Y|X$ are defined by
\[
G_y(Y|X)=\{g\in G \mid gy=y\} \text{ and } T_y(Y|X)=\ker \big(G_y(Y|X) \to \Aut(k(y)|k(x))\big),
\]
respectively. If $y$ and $y'$ have the same image $x\in X$, then the decomposition and inertia groups of $y$ and $y'$ are conjugate and we sometimes write $G_x(Y|X)$ and $T_x(Y|X)$ for these groups if we don't care for conjugation.

\bigskip
Next we introduce the notion of numerical tameness.

\begin{definition}
Let $\bar X\in \Sch(S)$ be  normal connected and proper, and let $X \subset \bar X$ be a dense open subscheme. Let $Y\to X$ be an \'{e}tale Galois covering and let $\bar Y$ be the normalization of $\bar X$ in the function field $k(Y)$ of $Y$. We say that $Y \to X$ is {\em numerically tamely ramified along $\bar X \sm X$} if the order of the inertia group $T_x(\bar Y|\bar X) \subset G(\bar Y|\bar X)=G(Y|X)$ of each point $x\in \bar X \sm X$  is prime to the residue characteristic of $x$. An \'{e}tale covering $Y \to X$ is called numerically tamely ramified along $\bar X\sm X$ if it can be dominated by a Galois covering which is numerically tamely ramified along $\bar X\sm X$. This definition extends to the non-connected case by requiring numerical tame ramification for all connected components.
\end{definition}

\begin{remark}
If a point $x_0$ lies in the closure of another point $x_1$, then $T_{x_1}(Y|X)\subset T_{x_0}(Y|X)$. It therefore suffices to check numerical tameness on closed points.
\end{remark}

Numerical tameness is stable under base change in the following sense.

\begin{lemma} \label{numtameproperties} Let $\bar X\in \Sch(S)$ be  normal and proper, $X \subset \bar X$ a dense open subscheme and $Y\to X$  an \'{e}tale covering. Let $f: \bar X' \to \bar X$ be a proper morphism in $\Sch(S)$ with $\bar X'$ normal such that $X':=f^{-1}(X)$ is dense in $\bar X'$. If\/ $Y\to X$ is numerically tamely ramified along $\bar X\sm X$, then the base change $Y\times_X X' \to X'$ is numerically tamely ramified along $\bar X'\sm X'$.\smallskip
\end{lemma}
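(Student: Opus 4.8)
The plan is to reduce to the case of a connected Galois covering and then to bound the inertia groups on $\bar X'$ by those on $\bar X$ via the functoriality of normalization. First I would decompose everything into connected components (this is legitimate since numerical tameness is defined component-wise), so that we may assume $X$, $Y$ and $\bar X'$ are connected. By the definition of numerical tameness, $Y\to X$ is then dominated by a connected Galois covering $Z\to X$ with group $G$ which is numerically tamely ramified along $\bar X\sm X$; write $\bar Z$ for the normalization of $\bar X$ in $k(Z)$. The base change $Z\times_X X'\to X'$ is a (possibly disconnected) $G$-torsor dominating $Y\times_X X'\to X'$, so it suffices to prove that each connected component of $Z\times_X X'$ is numerically tamely ramified along $\bar X'\sm X'$; every connected component of $Y\times_X X'$ is then dominated by one of these numerically tame Galois coverings, hence numerically tame.

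So fix a connected component $W$ of $Z\times_X X'$. Since $X'$ is normal (being open in the normal scheme $\bar X'$) and $W\to X'$ is finite \'etale, $W$ is integral, and $W\to X'$ is Galois with group $H:=\mathrm{Stab}_G(W)\subseteq G$. Let $\bar W$ be the normalization of $\bar X'$ in $k(W)$; as $S$ is excellent, $\bar X'$ is Nagata, so $\bar W\to\bar X'$ is finite, and $\bar W\times_{\bar X'}X'=W$ because $W$ is itself the normalization of $X'$ in $k(W)$. Hence $\bar W\sm W$ lies over $\bar X'\sm X'=f^{-1}(\bar X\sm X)$. The key step is the construction of an $H$-equivariant $\bar X$-morphism $q\colon\bar W\to\bar Z$: the first projection $Z\times_X X'\to Z$ is $G$-equivariant, its restriction $W\to Z$ is $H$-equivariant and induces an inclusion $k(Z)\hookrightarrow k(W)$, and since $\bar W$ is a normal scheme dominant over $\bar X$ with $k(Z)\subseteq k(W)$, the universal property of normalization yields a unique $\bar X$-morphism $q\colon\bar W\to\bar Z$ compatible with that inclusion; by uniqueness $q$ is $H$-equivariant.

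Now take a point $w\in\bar W\sm W$ with image $x'\in\bar X'\sm X'$, and set $z:=q(w)\in\bar Z$ and $x:=f(x')\in\bar X\sm X$. One checks that $T_w(\bar W|\bar X')\subseteq T_z(\bar Z|\bar X)$: for $g\in T_w(\bar W|\bar X')\subseteq H$ we have $gz=q(gw)=q(w)=z$ since $gw=w$, and the $g$-equivariant residue-field inclusion $k(z)\hookrightarrow k(w)$ together with the triviality of the $g$-action on $k(w)$ forces $g$ to act trivially on $k(z)$, i.e.\ $g\in T_z(\bar Z|\bar X)$. Consequently $|T_w(\bar W|\bar X')|$ divides $|T_z(\bar Z|\bar X)|$, which is prime to the residue characteristic of $x$ because $Z\to X$ is numerically tamely ramified along $\bar X\sm X$; and since $f$ induces a field inclusion $k(x)\hookrightarrow k(x')$, the residue characteristics of $x$ and $x'$ coincide, so $|T_w(\bar W|\bar X')|$ is prime to the residue characteristic of $x'$. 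As $w$ was an arbitrary point over an arbitrary point of $\bar X'\sm X'$, the covering $W\to X'$ is numerically tamely ramified along $\bar X'\sm X'$, which completes the argument. The only slightly delicate points are the bookkeeping with the disconnected base change $Z\times_X X'$ and verifying the $H$-equivariance of $q$; once $q$ is available the inertia comparison is immediate.
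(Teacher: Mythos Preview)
Your argument is correct and follows exactly the idea the paper states in one sentence: the inertia groups occurring after base change are subgroups of the original inertia groups. You have simply written out in full detail---the passage to a dominating Galois cover, the handling of connected components of the base change, the construction of the equivariant map $q\colon \bar W\to \bar Z$ via the universal property of normalization, and the verification that $T_w(\bar W|\bar X')\subseteq T_z(\bar Z|\bar X)$---what the paper leaves to the reader.
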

\begin{proof}
This follows since the inertia groups of the base change are subgroups of the inertia groups of $Y\to X$.
\end{proof}

\begin{theorem}\label{ntimpliesvaltame}
Let $X\in \Sch(S)$ be a regular scheme, $\bar X$ a normal compactification and $Y\to X$ an \'{e}tale covering. If\/  $Y\to X$ is numerically tamely ramified along $\bar X\sm X$, then it is valuation-tame. In particular, $Y\to X$ is curve-, divisor- and chain-tame.
\end{theorem}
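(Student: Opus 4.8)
The plan is to reduce the statement to a purely local assertion about valuations and then invoke numerical tameness directly. Since valuation-tameness is defined componentwise and the hypothesis passes to Galois closures, I would first replace $Y\to X$ by a Galois covering $Y\to X$ with group $G$ which numerically tamely ramifies along $\bar X\sm X$; it suffices to prove this larger covering is valuation-tame, because tame ramification in a subextension follows from tame ramification in the Galois closure. Fix a valuation $v\in\Val_S(k(X))$; we must show $V_v(k(Y)|k(X))=1$. If $\mathrm{char}(k(X)v)=0$ this is automatic, so assume the residue characteristic is $p>0$, and let $w$ be an extension of $v$ to $k(Y)$.

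The key step is to relate the ramification group $V_w$ of the valuation $v$ to the inertia group at the center of $v$ on $\bar X$. Let $Z_v$ be the center of $v$ on $\bar X$ and let $\bar Y$ be the normalization of $\bar X$ in $k(Y)$; then $w$ has a center $Z_w$ on $\bar Y$ lying over $Z_v$, and the valuation ring $\O_v$ dominates $\O_{\bar X,Z_v}$ while $\O_w$ dominates $\O_{\bar Y,Z_w}$. Since $Y\to X$ is \'etale, if $Z_v$ meets $X$ then $v$ is unramified in $k(Y)|k(X)$ and we are done; so we may assume $Z_v\subset \bar X\sm X$. The decomposition group $G_w(k(Y)|k(X))$ maps onto the decomposition group of the point $Z_w$ over $Z_v$, and — this is the crucial compatibility — the inertia subgroup $T_w(k(Y)|k(X))$ of the valuation is contained in the inertia group $T_{Z_w}(\bar Y|\bar X)$ of the \emph{point}. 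Indeed, $T_{Z_w}(\bar Y|\bar X)$ is defined as the kernel of the action on $k(Z_w)|k(Z_v)$, and since $\O_w$ dominates $\O_{\bar Y,Z_w}$, any $g$ acting trivially on the residue field $k(w)$ of $\O_w$ in particular acts trivially on $k(Z_w)$ (which is a subfield of $k(w)$), so $T_w\subset T_{Z_w}(\bar Y|\bar X)$. By the numerical tameness hypothesis applied at the closed point of $\overline{Z_v}$ in $\bar X$ (using the remark that $T_{Z_v}\subset T_{x_0}$ for $x_0$ a closed point in $\overline{Z_v}$, and that the residue characteristic of $x_0$ is again $p$), the group $T_{Z_w}(\bar Y|\bar X)$ has order prime to $p$. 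Hence $T_w$, and a fortiori its $p$-Sylow subgroup $V_w=V_v(k(Y)|k(X))$, is trivial. This proves valuation-tameness.

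The last sentence follows immediately: valuation-tameness implies divisor-tameness and discrete-valuation-tameness (a divisorial valuation or a rank $d$ valuation lies in $\Val_S(k(X))$), discrete-valuation-tameness implies chain-tameness, and by Theorem~\ref{triangle-theo} all of (i)--(v) are equivalent, so $Y\to X$ is also curve-tame.

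The main obstacle I anticipate is the bookkeeping in the inclusion $T_w\subset T_{Z_w}(\bar Y|\bar X)$ when $Z_v$ is not a closed point: one must pass from the center of the valuation, which may be a higher-dimensional subscheme, down to a closed point where the numerical tameness hypothesis is literally stated, using the containment of inertia groups along specializations and the fact that residue characteristic is preserved under specialization within the excellent scheme $\bar X$. A secondary subtlety is checking that $\O_w$ genuinely dominates a local ring of $\bar Y$ at a point over $Z_v$ — this uses that $\bar X$, hence $\bar Y$, is proper over $S$ so that $v\in\Val_S$ has a center there, exactly as in the definition of $Z_v$ in Section~\ref{valsec}.
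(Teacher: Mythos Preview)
Your proof is correct and follows essentially the same route as the paper's: reduce to the Galois case, take a closed point $x$ in the center of $v$ on $\bar X$, and use the inclusion $T_v(k(Y)|k(X))\subset T_x(\bar Y|\bar X)$ together with the numerical tameness hypothesis to conclude that $T_v$ has order prime to $p$. The paper's argument is a three-line version of exactly what you wrote; your expansion of the inclusion $T_w\subset T_{Z_w}(\bar Y|\bar X)$ via the domination $\O_{\bar Y,Z_w}\subset \O_w$ and the resulting embedding of residue fields is the correct justification. One minor slip: you say $G_w$ ``maps onto'' the decomposition group at $Z_w$, but you only have (and only need) the inclusion $G_w\subset G_{Z_w}$; fortunately you use the correct containment for the inertia groups in the next clause, so the argument goes through.
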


\begin{proof} We may suppose that $Y\to X$ is Galois. Let $v$ be an $S$-valuation on $k(X)$ with residue characteristic $p>0$.  Let $x\in \bar X$ be a closed point the center of $v$. Then $x$ has residue characteristic $p$ and we have an inclusion $T_v(Y|X) \subset T_x(Y|X)$. Hence $T_v(Y|X)$ is of order prime to $p$ and $v$ is tamely ramified.
\end{proof}

\pagebreak
A partial converse in the case that $\bar X$ is regular is given by the following

\begin{theorem} \label{numcrit} Let $\bar X\in \Sch(S)$ be a regular, proper scheme, $X\subset \bar X$ a dense open subscheme and $Y\to X$ an \'{e}tale covering. Assume that one of the following conditions is satisfied: \smallskip
\begin{compactitem}
\item[\rm (a)]  $\bar X \sm X$ is a NCD  and $Y\to X$ is tamely ramified along $\bar X\sm X$, or \smallskip
\item[\rm (b)]  $Y\to X$ is curve-tame and can be dominated by a Galois covering with nilpotent Galois group.
\end{compactitem}

\smallskip\noindent
Then $Y \to X$ is numerically tamely ramified along $\bar X\sm X$.
\end{theorem}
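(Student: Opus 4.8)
The plan is to reduce both cases to a local statement about the inertia group at a closed point $x \in \bar X \sm X$ and to show that this inertia group has order prime to $p = \mathrm{char}\,k(x)$. We may assume throughout that $Y \to X$ is Galois with group $G$: in case (a) this is harmless, and in case (b) we are given a dominating Galois covering with nilpotent group, which it suffices to treat. The local model is $A = \O_{\bar X, x}^{sh}$, the strict henselization at $x$; write $X^{sh} = X \times_{\bar X} \Spec(A)$ and let $Y^{sh} \to X^{sh}$ be the base change. The inertia group $T_x(\bar Y | \bar X)$ is the Galois group of a connected component of $Y^{sh}$ over $X^{sh}$, so it is enough to bound $|\Aut(Y^{sh}_0/X^{sh})|$ where $Y^{sh}_0$ is such a component. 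Both the NCD hypothesis and curve-tameness are stable under the base change $\Spec(A) \to \bar X$ (the latter by Proposition~\ref{Groth-Wies}, once one knows NCD; for case (b) by the definition of curve-tameness and stability under composition $C \to \Spec(A) \to X$), so we are reduced to the following: $A$ regular strictly henselian local with closed point of residue characteristic $p$, boundary $D = \Spec(A)\sm X^{sh}$, and a Galois covering tame (in the appropriate sense) — show the Galois group has order prime to $p$.

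In case (a), $D$ is a strict normal crossing divisor in $\Spec(A)$, so $D = V(\pi_1) \cup \cdots \cup V(\pi_r)$ with $\pi_1, \dots, \pi_r$ part of a regular system of parameters, and $X^{sh} = \Spec(A_{\pi_1 \cdots \pi_r})$. Tame ramification along $D$ together with Abhyankar's Lemma (\cite[XIII Proposition 5.2]{sga1}, applied to the strictly henselian ring $A$, exactly as in the proof of Proposition~\ref{Groth-Wies}) shows that every connected tame covering of $X^{sh}$ has the form $\Spec(A_{\pi_1 \cdots \pi_r}[T_1, \dots, T_r]/(T_1^{n_1} - \pi_1, \dots, T_r^{n_r} - \pi_r))$ with each $n_i$ prime to $p$; hence the Galois group is a quotient of $\mu_{n_1} \times \cdots \times \mu_{n_r}$, of order prime to $p$. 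Since this holds for every closed point $x$, $Y \to X$ is numerically tamely ramified along $\bar X \sm X$.

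In case (b) I would argue by purity and induction, exploiting nilpotency to get down to cyclic covers of prime order where the Key Lemma applies. First reduce $D$ to an NCD on a modification: by Proposition~\ref{desing}-type resolution we may blow up $\bar X$ (in centers over the boundary) so that the boundary becomes a strict NCD; numerical tameness upstairs implies numerical tameness downstairs because inertia groups only get larger under such proper modifications in the direction we need (more precisely, $T_x(\bar Y|\bar X)$ is a subquotient controlled by the inertia groups on any modification dominating it — this is the standard comparison for inertia along the exceptional locus, and one checks that a point of $D$ on $\bar X$ is hit by a point of the strict transform). Then, with $D$ an NCD, suppose for contradiction that $p \mid |T_x(\bar Y|\bar X)|$ at some closed point $x$ of residue characteristic $p$. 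Because $G$ is nilpotent (so every nontrivial subgroup has nontrivial center, and $p$-Sylow subgroups are direct factors), we can pass to a subquotient covering that is cyclic of order $p$ and still has nontrivial inertia at (a point over) $x$; concretely, localize at a closed point of $\bar Y$ over $x$, take a subgroup of order $p$ of the inertia group, and form the intermediate $\Z/p$-extension. This cyclic degree-$p$ covering is still curve-tame (curve-tameness passes to subcoverings of the Galois closure), yet has wild ramification along a boundary divisor $D_0$ through $x$; applying the Key Lemma~\ref{key-lemma} (as in the proof of Proposition~\ref{Groth-Wies}) at a closed point of the normalization lying over $D_0$ produces a curve $C$ on which the base change is wildly ramified at a point of $\tilde C \sm \tilde C'$, contradicting curve-tameness.

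The main obstacle is case (b): making the reduction from nilpotent $G$ with wild inertia to a cyclic $\Z/p$-covering with wild inertia at the \emph{same} boundary point, while simultaneously keeping track that curve-tameness descends to this subquotient and that the NCD-reduction step does not destroy the wildness. Nilpotency is exactly what is needed here — for a general $G$ one cannot split off the $p$-part of the inertia as a subquotient of a covering of the same base — and matching this up cleanly with the Key Lemma's hypotheses (a \emph{local} normal base, a prime-degree Galois covering, wild ramification along a codimension-one component) is where the care is required. The NCD descent for numerical tameness in case (a), by contrast, is essentially the Abhyankar-type computation already carried out in Proposition~\ref{Groth-Wies}.
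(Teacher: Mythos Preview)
Your case (a) is correct and matches the paper: Abhyankar's Lemma over the strict henselization yields the Kummer form and hence prime-to-$p$ inertia at every closed boundary point.

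In case (b) there is a genuine gap. The NCD reduction is in the wrong direction: for a proper birational modification $\bar X' \to \bar X$ and points $y'\mapsto y$ in the respective normalizations one has $T_{y'}(\bar Y'|\bar X')\subset T_y(\bar Y|\bar X)$, so numerical tameness on the blowup $\bar X'$ does \emph{not} imply numerical tameness on the given $\bar X$; inertia downstairs can be strictly larger (the appendix exhibits coverings numerically tame for one regular compactification and not for another). More seriously, even if one grants that step, your passage to $Y\to Y/H$ for a cyclic subgroup $H\cong\Z/p$ of the inertia leaves you with a base $\overline{Y/H}$ that is merely normal near the relevant point, not regular; purity of the branch locus then fails, and you have no codimension-one ramified component on which to invoke the Key Lemma. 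The sentence ``yet has wild ramification along a boundary divisor $D_0$ through $x$'' is exactly the unjustified assertion.

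The paper repairs this as follows. First reduce to $G$ a $p$-group (a nilpotent group is the direct product of its Sylow subgroups). Then factor
\[
Y=X_n\to X_{n-1}\to\cdots\to X_1\to X_0=X
\]
into degree-$p$ Galois steps and let $\bar X_i$ be the normalization of $\bar X$ in $k(X_i)$. If numerical tameness fails at a closed point $x_0$ of residue characteristic $p$, take the least $a$ with $\bar X_{a+1}|\bar X_a$ ramified over the preimage of $x_0$. By minimality $\bar X_a\to\bar X$ is \'etale over $x_0$, so $\bar X_a$ is \emph{regular} there; now purity produces a ramified prime divisor through a closed point $x_a$ over $x_0$, and the Key Lemma applied to the localization of $\bar X_{a+1}|\bar X_a$ at $x_a$ yields a curve on which the base change is wildly ramified, contradicting curve-tameness of $Y|X$. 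The tower-and-first-bad-step device is precisely what guarantees a regular base at the moment the Key Lemma is needed; your subgroup-quotient construction does not.
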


\begin{remark}
The equivalence of numerical tameness and chain-tameness for nilpotent coverings has been shown in~\cite{S-tame}. Wiesend has given an incomplete
proof of the equivalence of numerical tameness and curve-tameness for nilpotent coverings in~\cite{W-tame}.
In the appendix we will give examples for curve-tame Galois coverings $Y \to X$ with non-nilpotent Galois group which are not numerically tamely ramified along $\bar X \sm X$ for some regular compactification $\bar X$. The first such example has been given by the referee of~\cite{W-tame}.
\end{remark}
\begin{proof}
Assume that $\bar X \sm X$ is a NCD and $Y\to X$ is tamely ramified along $\bar X\sm X$. By Abhyankar's Lemma \cite[XIII Proposition 5.2]{sga1}, the inertia group of every closed point $x\in \bar X$ has order prime to the residue characteristic of $x$. Hence $Y|X$ is numerically tamely ramified along $\bar X\sm X$.

Now assume that $\bar X \sm X$ is not necessarily a NCD but that $Y\to X$ can be dominated by a Galois covering with nilpotent group. Since a finite nilpotent group is the product of its Sylow subgroups, we may assume that $Y\to X$ is Galois and that $G=Gal(Y|X)$ is a finite $p$-group, where $p$ is some prime number. Assume that $Y|X$ were not numerically tamely ramified along $\bar X\sm X$. Then we find a closed point in $x_0\in \bar X\sm X$ with residue characteristic~$p$ which ramifies in $\bar Y|\bar X$.
Now we factor $Y\to X$ in the form
\[
Y=X_n \to X_{n_1} \to \cdots \to X_1\to X_0=X,
\]
such that $X_{i+1}\to X_{i}$ is Galois of degree~$p$ for $i=0,\ldots,n-1$. We denote by $\bar X_i$ the normalization of $\bar X$ in $k(X_i)$.  Let $a$, $0\leq a\leq n-1$, be the unique index such that $\bar X_a|\bar X$ is \'{e}tale over $x_0$ but $\bar X_{a+1}|\bar X_a$ is not etale
over the preimage $x_0$. Note that $\bar X_a$ is regular in a neighbourhood of the preimage of $x_0$, since there is a neighbourhood which is \'{e}tale over~$\bar X$. By purity, there exists a prime divisor on $\bar X_a$ which meets the preimage of $x_0$ in a closed point $x_a$ and which  ramifies in $\bar X_{a+1}|\bar X_a$. Applying the Key Lemma~\ref{key-lemma} to the localization of $\bar X_{a+1}|\bar X_a$ at $x_a$, we find a curve $\bar C\subset \bar X_a$ containing $x_a$ and with $C:=\bar C\cap X_a\neq\varnothing$, such that the base change $X_{a+1}\times_{X_a} \tilde C \to \tilde C$ ramifies along some point over $x_a$. We conclude that $X_{a+1}|X_a$ is not curve-tame. Hence $Y|X$ is not curve-tame. A contradiction.
\end{proof}

\section{Cohomological tameness}
Let $\bar X\in \Sch(S)$ be  normal connected and proper, and let $X \subset \bar X$ be a dense open subscheme. Let $Y\to X$ be an \'{e}tale Galois covering with group $G=\Gal(Y|X)$ and let $\bar Y$ be the normalization of $\bar X$ in the function field $k(Y)$ of $Y$. We denote the projection by $\pi: \bar Y \to \bar X$.

\begin{lemma} \label{cohlemma}
Using the notation as above let $\big(U_i=\Spec(A_i)\big)_{i\in I}$ be an affine Zariski-open covering of\/ $\bar X$ and put $\pi^{-1}(U_i)=:V_i=\Spec(B_i)$.   Then the following are equivalent.

\smallskip
\begin{compactitem}
\item[\rm (i)] For every closed point $x\in \bar X\sm X$ the semi-local ring $\O_{\bar Y, \pi^{-1}(x)}$ is a cohomologically trivial $G$-module. \smallskip
\item[\rm (ii)] For every point $x\in \bar X$ the semi-local ring $\O_{\bar Y, \pi^{-1}(x)}$ is a cohomologically trivial $G$-module. \smallskip
\item[\rm (iii)] $B_i$ is a cohomologically trivial $G$-module for all $i$. \smallskip
\end{compactitem}
\end{lemma}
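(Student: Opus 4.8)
The plan is to prove the cycle of implications (ii)$\Rightarrow$(i)$\Rightarrow$(iii)$\Rightarrow$(ii), using the standard formalism of cohomologically trivial modules over a finite group $G$ together with the fact that $\bar Y \to \bar X$ is a finite morphism, so that each $B_i$ is a finitely generated $A_i$-module and $\pi^{-1}(U_i)$ is affine. The implication (ii)$\Rightarrow$(i) is trivial, since closed points of $\bar X \sm X$ are in particular points of $\bar X$. So the real content is (i)$\Rightarrow$(iii) and (iii)$\Rightarrow$(ii), both of which rest on the following two elementary principles about cohomological triviality: first, cohomological triviality is a local property in the sense that a $G$-module $M$ is cohomologically trivial iff $\hat H^i(G_x, M)=0$ for all $i$ and all (the relevant) subgroups, which can be checked after localization; and second, for a point $x\in X$ lying in the \'etale locus of $\pi$, the localization $\O_{\bar Y,\pi^{-1}(x)}$ is an induced (equivalently, free over $\mathbb Z[G/G_x]$) $G$-module, hence automatically cohomologically trivial — indeed over $X$ the covering $Y\to X$ is \'etale Galois, so $\O_{\bar Y,\pi^{-1}(x)}$ is a $G_x$-Galois algebra over $\O_{\bar X,x}$ and the module is free of the expected rank.

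Concretely, for (iii)$\Rightarrow$(ii): fix $x\in \bar X$ and choose $i$ with $x\in U_i$. Since cohomological triviality passes to localizations (the functor $S^{-1}(-)$ is exact and commutes with Tate cohomology of the finite group $G$, because $\hat H^\ast(G,-)$ commutes with filtered colimits of $G$-modules and localization at the multiplicative set $A_i\smallsetminus\p_x$ is such a colimit), the $G$-module $\O_{\bar Y,\pi^{-1}(x)} = (B_i)_{\p_x}$ is cohomologically trivial as soon as $B_i$ is. This gives (ii) directly. For (i)$\Rightarrow$(iii): we must show each $B_i$ is cohomologically trivial knowing only triviality of the semi-local rings at closed points outside $X$. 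Here I would use that a finitely generated module over a noetherian ring is cohomologically trivial iff it is so after localization at every maximal ideal; so it suffices to treat $(B_i)_{\m}$ for every maximal ideal $\m$ of $A_i$, i.e.\ every closed point $x$ of $U_i$. If $x\in X$, then $x$ lies in the \'etale locus and $(B_i)_{\m}=\O_{\bar Y,\pi^{-1}(x)}$ is an induced $G_x$-module over the \'etale $G_x$-covering, hence cohomologically trivial. If $x\in \bar X\sm X$, it is exactly a closed point covered by hypothesis (i). This exhausts all maximal ideals, so $B_i$ is cohomologically trivial.

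The step I expect to be the main technical obstacle is justifying that cohomological triviality of a finitely generated module over a noetherian ring can be checked at maximal ideals, and more precisely the assertion that for $x\in X$ the module $\O_{\bar Y,\pi^{-1}(x)}$ is cohomologically trivial. The first point is a version of the ``local criterion'' for cohomological triviality (Serre, \emph{Corps locaux}, Ch.~IX, or Chinburg--Erez--Pappas--Taylor): one shows the Tate cohomology $\hat H^\ast(G, B_i)$ is a finitely generated $A_i^G$-module and vanishes iff all its localizations vanish, reducing to decomposition subgroups $G_x$ and their $p$-Sylow subgroups. The second point follows because over $X$ the normalization $\bar Y$ restricts to $Y$, and $Y\to X$ being \'etale Galois means $\O_{Y,\pi^{-1}(x)}$ is, locally for the \'etale topology, a product of copies of $\O_{X,x}$ permuted simply transitively by $G/G_x$, whence it is $\mathbb Z[G]$-free up to the issue of the residue field extension at $x$ — and even there the decomposition group $G_x$ acts on the semi-local ring through a Galois action of $G_x/T_x$ on an unramified (hence \'etale, \'etale-locally split) extension, so the module is induced from the trivial subgroup in the \'etale-local picture and cohomologically trivial by faithfully flat descent of cohomological triviality. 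Once these two ingredients are in place, the three implications fall out as indicated, and the equivalence with the remaining (presumably numerical/Abhyankar-type) characterizations is deferred to the following results of the paper.
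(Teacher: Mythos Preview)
Your proposal is correct and follows essentially the same route as the paper: localization commutes with Tate cohomology by flatness, so cohomological triviality of $B_i$ can be tested at (maximal) primes of $A_i$, and at unramified primes one passes to the strict henselization where the $G$-module becomes induced. The paper's write-up is a bit leaner than yours on both technical points you flag: it uses directly that $\hat H^i(H,B_i)\otimes_{A_i}(A_i)_\p\cong\hat H^i(H,B_i\otimes_{A_i}(A_i)_\p)$ from flatness of $A_i\to (A_i)_\p$ (no appeal to finite generation or filtered colimits is needed, since a complete resolution of $\Z$ by finitely generated free $\Z[G]$-modules exists), and for the unramified case it simply notes that $B_i\otimes_{A_i}(A_i)_\p^{sh}$ is an induced $G$-module, with faithful flatness of $(A_i)_\p\to (A_i)_\p^{sh}$ giving vanishing before henselization.
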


\begin{proof}
For any prime ideal $\p\in A_i$, the localization $A_i \to (A_i)_\p$ is flat. Hence for every subgroup $H\subset G$ and all $i\in \Z$, we have
\[
\hat H^i(H, B_i \otimes_{A_i} (A_i)_\p) \cong \hat H^i(H, B_i) \otimes_{A_i} (A_i)_\p,
\]
where $\hat H^i(H,- )$ denotes the Tate cohomology of the finite group $H$. We conclude that $B_i$ is a cohomologically trivial $G$-module if and only if  $B_i \otimes_{A_i} (A_i)_\p$ is cohomologically trivial for every prime ideal, or even for every maximal ideal. In order to conclude the proof, it remains to show, that $B_i \otimes_{A_i} (A_i)_\p$ is a cohomologically trivial $G$-module for such $\p$ which are unramified $B_i$. To this end note that the natural homomorphism $(A_i)_\p \to (A_i)_\p^{sh}$ to the strict henselization is faithfully flat and that $B_i \otimes_{A_i} (A_i)_\p^{sh}$ is an induced $G$-module if $\p$ is unramified.
\end{proof}

\begin{definition}
We say that $Y \to X$ is {\em cohomologically tamely ramified along $\bar X \sm X$} if the equivalent conditions of Lemma~\ref{cohlemma} are satisfied. An \'{e}tale covering $Y \to X$ is called cohomologically tamely ramified along $\bar X\sm X$ if it can be dominated by a Galois covering which is cohomologically tamely ramified along $\bar X\sm X$. This definition extends to the non-connected case by requiring cohomological tame ramification for all connected components.
\end{definition}

\begin{theorem}\label{cohotame}
Let $\bar X\in \Sch(S)$ be  normal, connected and proper, and let $X \subset \bar X$ be a dense open subscheme. Then an \'{e}tale covering $Y \to X$ is  cohomologically tamely ramified along $\bar X \sm X$ if and only if it is numerically tamely ramified along $\bar X \sm X$.
\end{theorem}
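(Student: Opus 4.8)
The plan is to reduce the statement, in both directions, to a purely local question about a single complete local ring and its decomposition group, and then to play the defining property of the inertia group off against group cohomology. Since both \emph{numerical} and \emph{cohomological} tameness are defined via domination by a Galois covering, in both directions one immediately reduces to the case where $Y\to X$ is itself Galois with group $G$; write $\bar Y$ for its normalization over $\bar X$ and $\pi\colon\bar Y\to\bar X$ for the projection. By Lemma~\ref{cohlemma} it then suffices to prove, for each closed point $x\in\bar X\sm X$ (with residue characteristic $p$, which is then also the residue characteristic at every $y\in\pi^{-1}(x)$ since $k(y)|k(x)$ is finite), that the semi-local ring $B:=\O_{\bar Y,\pi^{-1}(x)}$ is a cohomologically trivial $G$-module if and only if the inertia group $T_y(\bar Y|\bar X)$ of some $y\mid x$ has order prime to $p$.

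\emph{Cohomologically tame implies numerically tame.} Suppose $p\mid |T_y|$, so the wild inertia $V_y$ is nontrivial; pick $\sigma\in V_y$ of order $p$ and set $H=\langle\sigma\rangle$. Since $H\subset G_y(\bar Y|\bar X)$ fixes $\m_y$, the prime $\m_y$ is the only prime of $B$ over $\m_y\cap B^H$, so localizing the $B^H$-module $\hat H^\bullet(H,B)$ at $\m_y\cap B^H$ (flat base change, as in the proof of Lemma~\ref{cohlemma}) yields $\hat H^\bullet(H,B_{\m_y})$; this vanishes if $B$ is cohomologically trivial over $G$. But $\sigma$, lying in $T_y$, acts as the identity on the residue field $k(y)$, so for $b\in B_{\m_y}$ the norm $N_H(b)=\sum_{i=0}^{p-1}\sigma^i b$ is congruent to $p\bar b=0$ modulo $\m_y$; hence $N_H(B_{\m_y})$ lies in the maximal ideal of the local ring $(B_{\m_y})^H$, and $\hat H^0(H,B_{\m_y})=(B_{\m_y})^H/N_H(B_{\m_y})$ surjects onto the nonzero residue field of $(B_{\m_y})^H$ --- a contradiction.

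\emph{Numerically tame implies cohomologically tame.} Let $A=\O_{\bar X,x}$. By excellence of $\bar X$, the ring $B$ is finite over $A$ and $\hat A\otimes_A B=\prod_{y\mid x}\mathcal B_y$ is a finite product of complete normal local rings, which $G$ permutes transitively with stabilizer of $\mathcal B_y$ equal to the decomposition group $D=G_y(\bar Y|\bar X)$. Faithfully flat base change along $A\to\hat A$ together with Shapiro's lemma reduce cohomological triviality of $B$ over $G$ to that of $\mathcal B:=\mathcal B_y$ over $D$; here $D$ acts with inertia subgroup $I$ of order prime to $p$ (inertia groups do not change under completion, residue fields being unchanged). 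By the criterion of Serre \cite[Ch.~IX]{Ser-loc} it is enough to check, for every prime $\ell$ and an $\ell$-Sylow $D_\ell\subset D$, that two consecutive Tate groups $\hat H^i(D_\ell,\mathcal B)$ vanish. If $\ell\ne p$ then $|D_\ell|$ is invertible in $\mathcal B$, so $\mathcal B$ is cohomologically trivial over $D_\ell$. If $\ell=p$ then $D_p\cap I=1$, so $D_p$ acts faithfully on the residue field $k=k(\m_{\mathcal B})$ and, by Artin's theorem, $k\mid k^{D_p}$ is Galois with group $D_p$. Now filter $\mathcal B$ by the powers of its maximal ideal $\m$: each graded piece $\m^n/\m^{n+1}$ is a finite-dimensional $k$-vector space with a semilinear $D_p$-action, i.e.\ a module over the skew group ring $k\rtimes D_p$, which by the Galois property is isomorphic to $\mathrm{End}_{k^{D_p}}(k)$, a matrix algebra over $k^{D_p}$; hence $\m^n/\m^{n+1}$ is a direct sum of copies of the simple module $k$, and $k$ is cohomologically trivial over $D_p$ because the normal basis theorem identifies it with the induced module $k^{D_p}[D_p]$. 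By d\'evissage each $\mathcal B/\m^n$ is cohomologically trivial over $D_p$, and since the inverse system $\{\mathcal B/\m^n\}$ is Mittag--Leffler we get $H^i(D_p,\mathcal B)=\varprojlim_n H^i(D_p,\mathcal B/\m^n)=0$ for all $i\ge1$; in particular $\hat H^1(D_p,\mathcal B)=\hat H^2(D_p,\mathcal B)=0$, completing Serre's criterion.

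The main obstacle is this last implication, and inside it the fact that $\bar X$ is only assumed normal: one cannot resolve the singularities of $A=\O_{\bar X,x}$ or assume it Cohen--Macaulay, so $\mathcal B$ need not be flat over $\mathcal B^{D_p}$ and cannot be realized as an induced $D_p$-module in one stroke. The $\m$-adic filtration is the device that circumvents this, transporting the problem to the residue field, where the honestly Galois situation $k\mid k^{D_p}$ --- valid even for imperfect $k$ --- supplies cohomological triviality of the graded pieces; the remaining points requiring care are the passage to the inverse limit and the bookkeeping of decomposition and inertia groups under completion.
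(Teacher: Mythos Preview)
Your argument is correct in both directions. The implication \emph{cohomologically tame $\Rightarrow$ numerically tame} is essentially the paper's: both show that if $p$ divides $|T_y|$ then the norm (trace) lands in the maximal ideal, so $\hat H^0$ of a cyclic $p$-subgroup is nonzero. (A small point: you invoke a ``wild inertia'' $V_y$, which the paper does not define for scheme points; but your argument only uses that $\sigma\in T_y$ acts trivially on $k(y)$, so Cauchy's theorem applied to $T_y$ already supplies the needed $\sigma$.)

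For \emph{numerically tame $\Rightarrow$ cohomologically tame} the paper takes a much shorter and more elementary route than yours. Instead of completing, applying Shapiro, reducing to Sylow subgroups, filtering by powers of $\m$, invoking Galois descent on the graded pieces, and then handling the inverse limit, the paper simply observes that numerical tameness forces the trace $\tr_{B|B^H}\colon B\to B^H$ to be surjective for every subgroup $H\subset G$ (Nakayama plus separability of the residue extension after passing to the henselization), whence $\hat H^0(H,B)=0$; a one-line Hilbert-90-style averaging, using any $x\in B$ with $\tr_{B|A}(x)=1$, then splits every $1$-cocycle, giving $H^1(H,B)=0$ and hence cohomological triviality by the two-consecutive-degrees criterion. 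Your approach is more structural---it explains the vanishing via the normal basis theorem on each $\m^n/\m^{n+1}$---but it needs noticeably more machinery (excellence for normality of the completion, Mackey/Shapiro for the induced-module reduction, Mittag--Leffler for the passage to $\varprojlim$), whereas the paper gets by with a single explicit element and never leaves the semi-local ring $B$.
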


\begin{proof} Let $\big(U_i=\Spec(A_i)\big)_{i\in I}$ be an affine Zariski-open covering of\/ $\bar X$ and put $\pi^{-1}(U_i)=:V_i=\Spec(B_i)$. Let $\tr_{B_i|A_i}: B_i \to A_i$ be the trace map $b\mapsto \sum_{g\in G} gb$. We omit the indices $i$ from the notation and call $B|A$ tamely ramified if for every ma\-xi\-mal ideal $\m\subset A$ the inertia group $T_\m(B|A)$ is of order prime to the characteristic of $A/\m$. The following claims are standard, cf.\ \cite{NSW}, Theorem 6.1.10.

\medskip \noindent
{\em Claim 1:} $\tr_{B|A}: B \to A$ is surjective if and only if $B|A$ is cohomologically tamely ramified.

\medskip\noindent
{\em Proof of claim 1.} For a maximal ideal $\m\subset A$, we denote the henselization of $A_\m$ by $A_\m^h$. Recall that $A_\m \to A_\m^h$  is faithfully flat. Hence $\tr_{B|A}: B \to A$ is surjective if and only if $\tr_{B|A} \otimes_A  A_\m^h: B \otimes_A  A_\m^h \to A_\m^h$ is surjective for all $\m\subset A$. Therefore we may assume that $A$ is local henselian with maximal ideal $\m$. Then $B$ is a finite product of local henselian rings and we may reduce to the case that $B$ is local henselian with maximal ideal $\m_B$. Let $\kappa$ and $\kappa'$ be the residue fields of $A$ and $B$, respectively. By Nakayama's Lemma, $\tr_{B|A}: B \to A$ is surjective if and only if the induced map $B/\m B \to A/\m$ is surjective. Since the trace sends $\m_B$ to $\m$, this map factors through a map $\tr: \kappa' \to \kappa$.  Let $p$ be the residue characteristic. By definition, the inertia group acts trivially on $\kappa'$. Hence $\tr$ is the zero map if $p\mid \# T_\m(B|A)$. If $e=\# T_\m(B|A)$ is prime to $p$, then $\kappa'|\kappa$ is separable and $\tr= e \cdot \tr_{\kappa'|\kappa}$ is surjective. This shows the claim.

\medskip \noindent
{\em Claim 2:} $B$ is a cohomologically trivial $G$-module if and only if $B|A$ is cohomologically tamely ramified.

\medskip\noindent
{\em Proof of claim 2.}
If $B$ is a cohomologically trivial $G$-module, then $\tr_{B|A}$ is surjective since $0=\hat H^0(G, B)= A/\tr_{B|A}(B)$. Hence $B|A$ is cohomologically tamely ramified by Claim 1. If $B|A$ is cohomologically tamely ramified and $H\subset G$ is a subgroup, then also $B|B^H$ is cohomologically tamely ramified. Hence $\tr_{B|B^H}$ is surjective by Claim 1, which implies $\hat H^0(H, B)=0$.

Next we prove that $H^1(H,B)=0$. Let $a(\sigma)\in B$ be a $1$-cocycle and let $x\in B$ be such that $\tr_{B|A}(x)=1$. Setting
$$
b:= \sum_{\sigma\in H}a(\sigma) \sigma x\,,
$$
we obtain for $\tau\in H$,
$$
\tau b=\sum_{\sigma\in H}\tau a(\sigma) (\tau\sigma x)=
\sum_{\sigma\in H}(a({\tau\sigma})-a(\tau))(\tau\sigma x) =
b-a(\tau)\tr_{B|A}(x)\,.
$$
Therefore $a(\tau) =(1-\tau)b$\,, hence $a(\tau)$ is a
$1$-coboundary. By \cite{NSW}, Proposition 1.8.4, we conclude that $B$ is cohomologically trivial. This shows Claim 2.

\medskip\noindent
Now we show the theorem. If $Y \to X$ is numerically tamely ramified, then $B_i$ is a cohomologically trivial $G$-module for all $i$ by Claim 2. Hence $Y\to X$ is cohomologically tamely ramified by Lemma~\ref{cohlemma}. The same arguments also show the reverse direction.
\end{proof}

 \section{Finiteness theorems}

In this section we will prove a tame version of a finiteness result due to Katz and Lang \cite{K-L} and will deduce finiteness theorems for the abelianized tame fundamental groups of arithmetic schemes, which had been previously shown in \cite{S-sing}.

\medskip
Let, as before,  $S$ be an integral, pure-dimensional and excellent base scheme.
Let $X\in \Sch(S)$ be regular and connected. We use the word tame for the equivalent notions of curve-, divisor-, discrete-valuation- and chain-tameness and  {\em tame covering} means a finite, \'{e}tale morphism which is tame. The tame coverings of $X$ satisfy the axioms of a Galois category (\cite{sga1}, V, 4). After choosing a geometric point $\bar x$ of $X$, we have the fibre functor $(Y\to X)\mapsto \Mor_X(\bar x, Y)$ from the category of tame coverings of $X$ to the category of sets, whose automorphism group is called the {\bf tame fundamental group} $\pi_1^t(X/S,\bar x)$. It classifies \'{e}tale coverings of $X$ which are tame when considered in $\Sch(S)$.  Denoting the usual \'{e}tale fundamental group by $\pi_1(X,\bar x)$, we have an obvious surjection
\[
\pi_1(X,\bar x) \twoheadrightarrow \pi_1^t(X/S,\bar x),
\]
which is an isomorphism if $X$ is proper over $S$. The fundamental groups to different base points are isomorphic, the isomorphism being canonical up to inner automorphisms. Therefore the abelianized fundamental groups
\[
\pi_1^\ab(X,\bar x)\ \text{ and }\ \pi_1^{t,\ab}(X/S,\bar x)
\]
are canonically independent of the base point $\bar x$, which we will exclude from notation. Note that the notion of (curve-) tameness is stable under base change. Therefore, given a morphism $f:Y\to X$ between regular, connected schemes in $\Sch(S)$, a geometric point $\bar y$ of $Y$ and its image $\bar x=f(\bar y)$ in $X$, we obtain an induced homomorphism
\[
\pi_1^t(Y/S,\bar y) \lang \pi_1^t(X/S,\bar x).
\]
Now assume that the function field $k$ of $S$ is absolutely finitely generated and let $p=\text{char}(k)$. Consider the kernel
\[
\ker(Y/X):= \ker \big (\pi_1^\ab(Y) \to \pi_1^\ab(X)\big)
\]
If $f: Y\to X$ is smooth and surjective, then a theorem of Katz and Lang (\cite{K-L}, Theorem 1) asserts that the prime-to-$p$ part of $\ker(Y/X)$ is finite. We will prove an analogous result for the tame kernel
\[
\ker^t_S(Y/X):= \ker \big (\pi_1^{t,\ab}(Y/S) \to \pi_1^{t,\ab}(X/S)\big),
\]
which does not exclude the $p$-part.

\begin{theorem}\label{Katz-Lang} Let
$S$ be an integral, pure-dimensional and excellent base scheme whose function field is absolutely finitely generated.  Let $f:Y\to X$ be a smooth, surjective morphism of connected regular schemes in $\Sch(S)$. Assume that the generic
fibre of $f$ is geometrically connected and that one of the following conditions {\rm (i)} and {\rm (ii)} is satisfied:

\medskip

\begin{compactitem}
\item[\rm (i)]  $X$ has a regular compactification $\bar X$ over $S$ such that the boundary $\bar X \sm X$ is a normal crossing divisor.\smallskip
\item[\rm (ii)] The generic fibre of $f$ has a rational point.
\end{compactitem}

\medskip\noindent
Then the group
\[
\ker_S^t (Y/X):=\ker \big( \pi_1^{t,\ab}(Y/S) \to  \pi_1^{t,\ab}(X/S) \big)
\]
is finite.
\end{theorem}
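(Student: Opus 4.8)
The strategy is to reduce the finiteness of $\ker_S^t(Y/X)$ to the theorem of Katz and Lang \cite{K-L} for the prime-to-$p$ part, where $p=\operatorname{char} k$, and to a new argument for the $p$-part. The geometric object carrying the information is the generic fibre $Y_\eta$ of $f$ over the field $K=k(X)$: since $f$ is smooth with geometrically connected generic fibre, $Y_\eta$ is a smooth geometrically connected $K$-variety, and $K$ is finitely generated over the prime field. One first establishes an exact sequence of the shape
\[
\pi_1^{t,\ab}(Y_{\bar\eta})_{G_K}\ \lang\ \pi_1^{t,\ab}(Y/S)\ \lang\ \pi_1^{t,\ab}(X/S)\ \lang\ 1 ,
\]
where $Y_{\bar\eta}$ is the geometric generic fibre, $G_K:=\Gal(\bar K|K)$, and $G_K$ acts by monodromy; equivalently one works with the corresponding Leray exact sequence in tame cohomology. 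This identifies $\ker_S^t(Y/X)$ with a quotient of the group of monodromy coinvariants of the geometric tame abelianized fundamental group of the generic fibre. Hypothesis~(i), a regular compactification $\bar X$ with $\bar X\sm X$ a NCD, is what makes this sequence available through the structure theory of Section~\ref{tameness-section}; hypothesis~(ii), a rational point on $Y_\eta$ and hence a section of $f$ over a dense open of $X$, splits it, so that $\ker_S^t(Y/X)$ becomes \emph{exactly} the coinvariant group. In either case it remains to show that $\pi_1^{t,\ab}(Y_{\bar\eta})_{G_K}$ is finite.

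For the prime-to-$p$ part this is classical. Since wild inertia is pro-$p$ and, when $p>0$, every scheme in $\Sch(S)$ is an $\F_p$-scheme, the canonical surjection $\pi_1^{\ab}(Z)\twoheadrightarrow\pi_1^{t,\ab}(Z/S)$ has pro-$p$ kernel for every regular connected $Z\in\Sch(S)$. Applying the exact functor "maximal prime-to-$p$ quotient" to the map induced by $f$ identifies the prime-to-$p$ part of $\ker_S^t(Y/X)$ with that of the classical kernel $\ker\bigl(\pi_1^{\ab}(Y)\to\pi_1^{\ab}(X)\bigr)$, which is finite by \cite{K-L}, Theorem~1. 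When $\operatorname{char} k=0$ one uses the unconditional Katz--Lang theorem instead, together with the fact that under hypothesis~(i) the wild part of $\pi_1^{\ab}(X)$ is the image of the wild part of $\pi_1^{\ab}(Y)$, so that the passage from $\pi_1^{\ab}$ to $\pi_1^{t,\ab}$ does not enlarge the kernel.

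The genuinely new point is the $p$-part, for $p>0$. The key observation is that a tame covering of $p$-power degree is unramified in codimension one along every normal compactification: a tame ramification index dividing a power of $p$ must equal $1$. Hence, up to finite groups, the pro-$p$ part of $\pi_1^{t,\ab}(Y_{\bar\eta})$ is $T_p(A)$, where $A$ is the Albanese variety of a smooth proper model of $Y_\eta$ over $K$, with its natural $G_K$-action. Finiteness of $T_p(A)_{G_K}$ --- equivalently, via the Weil pairing, of $T_p(A^\vee)^{G_K}$ --- now follows from the Lang--Néron theorem: $A(K)$ and $A^\vee(K)$ are finitely generated modulo the $K/\kappa$-trace for the field of constants $\kappa$, so their $p$-divisible parts are finite and the relevant modules of invariants are finite.

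The main obstacle is that, in the absence of resolution of singularities, one cannot in general compactify $f$ to a nice proper morphism, nor apply proper-smooth base change to obtain the Leray/homotopy sequence above, nor produce the smooth proper model of $Y_\eta$ used in the last step. This is circumvented by the resolution-free characterisation of tameness via curves (Theorem~\ref{triangle-theo}): since $\pi_1^{t,\ab}(\,\cdot\,/S)$ is determined by curve-tame coverings, one may freely replace $X$, and correspondingly $Y$, by convenient birational models and fibre $X$ into families of curves, reducing the dimension until the two-dimensional case of Proposition~\ref{desing} and the one-dimensional class field theoretic finiteness statements apply; the Key Lemma~\ref{key-lemma} guarantees that such a fibering detects all relevant ramification. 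Organising this dévissage --- keeping track of which part of $\ker_S^t(Y/X)$ is controlled by Katz--Lang and which by Lang--Néron, and verifying compatibility with hypotheses~(i) and~(ii) --- is the technical heart of the proof.
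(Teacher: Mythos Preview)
Your overall architecture is right --- reduce to the generic fibre, separate prime-to-$p$ from $p$, and invoke Katz--Lang --- but two of the load-bearing steps are not there.

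First, the displayed sequence
\[
\pi_1^{t,\ab}(Y_{\bar\eta})_{G_K}\ \lang\ \pi_1^{t,\ab}(Y/S)\ \lang\ \pi_1^{t,\ab}(X/S)\ \lang\ 1
\]
is not exact in the middle, and hypothesis~(i) is \emph{not} what makes it exact. The homotopy exact sequence you have in mind lives over the generic point $\eta=\Spec K$, not over $S$: it reads
\[
\pi_1^{t,\ab}(Y_{\bar\eta})_{G_K}\ \lang\ \pi_1^{t,\ab}(Y_\eta/\eta)\ \lang\ G_K^{\ab}\ \lang\ 1.
\]
When you push down from $\eta$ to $S$ there is a defect: abelian extensions of $K$ that become tame after base change to $L=k(Y)$ need not be tame over $X$. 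Concretely, the cokernel of $\ker^t_\eta(Y_\eta/\eta)\to\ker^t_S(Y/X)$ is the Galois group $H=G(K_2|K_1)$, where $K_1$ (resp.\ $K_2$) is the maximal abelian extension of $K$ whose normalization over $X$ (resp.\ over $Y$, after composing with $L$) is tame. Hypothesis~(ii) kills $H$ by pulling back along a section, as you say. But under hypothesis~(i) the group $H$ is genuinely nonzero in general; one must show it is \emph{finite}. The paper does this with a ramification estimate (Lemma~\ref{aufess}): if $v$ is one of the finitely many divisorial valuations on $\bar X\sm X$ and $w$ is a geometric rank-one extension of $v$ to $L$ coming from a compactification of $Y$, then $\#T_v(K'|K)$ divides $w(\pi)\cdot[(kv)_{Lw}:kv]_i$ for every finite $K'|K$ in which $w$ is unramified. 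Since tame coverings of $Y$ are divisor-tame, this bounds the $p$-part of each inertia group in $H$, hence $H$ is finite. Nothing in Section~\ref{tameness-section} gives you this for free; it is a separate lemma, and without it hypothesis~(i) does no work.

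Second, your treatment of the $p$-part presupposes a smooth proper model of $Y_\eta$ in order to speak of its Albanese and apply Lang--N\'eron. You flag this yourself, but the proposed workaround (``fibre $X$ into families of curves'' via curve-tameness and the Key Lemma) is not what is needed and is not carried out. The actual d\'evissage fibres $Y$, not $X$: after reducing to $S=X=\Spec K$ one passes to an \'etale neighbourhood where $Y$ factors as a tower of elementary fibrations $Y=V_n\to\cdots\to V_0=\Spec K$ with sections (SGA4, XI.3.1 and EGA~IV, 17.16.3), and uses the exact sequences
\[
0\to\ker^t_K(V_i/V_{i-1})\to\ker^t_K(V_i/K)\to\ker^t_K(V_{i-1}/K)\to 0
\]
to reduce to the case where $Y$ is a smooth affine curve over $K$. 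Now the smooth compactification $\bar Y$ exists, the $p$-part of $\ker^t_K(Y/K)$ equals the $p$-part of $\ker(\bar Y/K)$, and this is finite by Katz--Lang, Theorem~2 (which is where the Lang--N\'eron input lives). The Key Lemma and curve-tameness are used earlier, in Theorem~\ref{triangle-theo}, to know what ``tame'' means; they do not themselves produce the fibration.
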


\begin{remark}
Condition~(i) is satisfied if $X$ has Krull-dimension less or equal to $2$; conjecturally, it is satisfied for an arbitrary $X$.
\end{remark}
In the proof of Theorem~\ref{Katz-Lang} we will need the following

\begin{lemma}\label{aufess}
Let $A$ be a discrete valuation ring and let $v$ be the associated discrete rank one valuation on the quotient field\/ $k$ of $A$. Let $K|k$ be a finitely generated, regular field extension and let $w\in \Val_{\Spec(A)}(K)$ be a geometric discrete rank one valuation on $K$ extending $v$.   Let $\pi\in k$ be a uniformizer for $v$ and let $k'|k$ be a finite Galois extension such that $w$ is unramified in $Kk'|K$. Then
\[
\# T_v(k'|k) \mid \; w(\pi) \cdot [(kv)_{Kw} :kv]_i,
\]
where $[(kv)_{Kw} :kv]_i$ is the index of inseparability of the relative algebraic closure $(kv)_{Kw}$ of $kv$ in $Kw$ (which is finite over $kv$ by Proposition~\ref{valuationtheory}).
\end{lemma}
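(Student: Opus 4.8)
The plan is to rewrite the claim in terms of a ramification index and an inseparable residue degree, reduce to the case where $v$ is totally ramified in $k'|k$, and then use that an element which is purely inseparable over $kv$ but lies in a residue extension separable over $Kw$ must already lie in $Kw$. So first I would pick an extension $w'$ of $v$ to $k'$; since $k'|k$ is a finite Galois extension of a discretely valued field and the completion of $A$ is a complete, hence defectless, discrete valuation ring, one has the standard identity $\#T_v(k'|k)=\#T_{w'}(k'|k)=e(w'|v)\cdot[k'w':kv]_i$, with $e(w'|v)$ the ramification index and $[k'w':kv]_i$ the inseparable degree of the residue extension. Thus it suffices to prove that $e(w'|v)\cdot[k'w':kv]_i$ divides $w(\pi)\cdot[(kv)_{Kw}:kv]_i$.

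Next I would reduce to $T_v(k'|k)=\Gal(k'|k)$, i.e.\ to $k'|k$ totally ramified at $v$, by passing to the inertia field $k^T=(k')^{T_{w'}(k'|k)}$. Choose an extension $W$ of $w$ to $Kk'$; after replacing $w'$ by the conjugate extension $W|_{k'}$ (which changes neither $\#T_v(k'|k)$ nor $e(w'|v)$ nor $[k'w':kv]_i$) we may assume $W|_{k'}=w'$. I would then replace $k,A,K,w$ by $k^T$, the valuation ring of $v_1:=W|_{k^T}$, $Kk^T$, and $w_1:=W|_{Kk^T}$. Regularity of $K|k$ is used here: $Kk^T\otimes_{k^T}\overline{k}\cong K\otimes_k\overline{k}$ is a domain, so $Kk^T|k^T$ is again finitely generated and regular and $\Gal(k'|k^T)=T_{w'}(k'|k)$. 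One checks the surviving hypotheses: $w_1$ is still unramified in $Kk'|Kk^T$ (its inertia group is $T_W(Kk'|K)\cap\Gal(Kk'|Kk^T)$, hence trivial); $\pi$ is still a uniformizer of $v_1$, so $w_1(\pi)=w(\pi)$; and, because $(Kk^T)w_1|Kw$ and $k^Tv_1|kv$ are finite separable, the relative algebraic closure of $k^Tv_1$ in $(Kk^T)w_1$ is finite over $k^Tv_1$ (this finiteness is all that ``geometric'' is used for, and it persists since $(Kk^T)w_1$ is finitely generated over $kv$) and has the same inseparable degree over $k^Tv_1$ as $(kv)_{Kw}$ has over $kv$. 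After the reduction $k'w'|kv$ is purely inseparable and $\#T_v(k'|k)=e(w'|v)\cdot f_i$ with $f_i:=[k'w':kv]=[k'w':kv]_i$; I keep the original letters for the reduced data.

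It remains to bound the two factors. For the ramification index: $e(W|v)=e(W|w)\,e(w|v)=w(\pi)$ because $w$ is unramified in $Kk'|K$, whereas $e(W|v)=e(W|w')\,e(w'|v)$; hence $e(w'|v)\mid w(\pi)$. For the inseparable part (the heart of the matter; in characteristic zero $k'w'=kv$ and there is nothing to prove, so suppose $p:=\mathrm{char}(kv)>0$): unramifiedness of $w$ in $Kk'|K$ makes $(Kk')W|Kw$ finite separable, and I would write $k'w'=kv(\alpha_1,\dots,\alpha_r)$ with $\alpha_j^{p^{m_j}}\in kv\subseteq Kw$. Each $\alpha_j\in k'w'\subseteq(Kk')W$ is purely inseparable over $Kw$ but also separable over $Kw$ (it lies in the separable intermediate field $Kw(k'w')$ of $(Kk')W|Kw$), so $\alpha_j\in Kw$; hence $k'w'\subseteq Kw$, and being algebraic over $kv$, $k'w'\subseteq(kv)_{Kw}$, so $f_i=[k'w':kv]_i$ divides $[(kv)_{Kw}:kv]_i$. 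Therefore $\#T_v(k'|k)=e(w'|v)\cdot f_i$ divides $w(\pi)\cdot[(kv)_{Kw}:kv]_i$, as desired.

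The main obstacle I foresee is the interplay between this last step and the reduction preceding it: for a general Galois extension the residue extension $k'w'|kv$ has a separable part that need not embed into $Kw$, so one genuinely has to pass to the inertia field first to make $k'w'|kv$ purely inseparable, and the one subtle point is then to verify that this passage leaves the right-hand quantities $w(\pi)$ and $[(kv)_{Kw}:kv]_i$ untouched.
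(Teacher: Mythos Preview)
Your argument is correct and follows the same core strategy as the paper: factor $\#T_v(k'|k)=e\cdot f_i$, show $e\mid w(\pi)$ by multiplicativity of ramification indices through an extension $W$ of $w$ to $Kk'$, and show $f_i\mid[(kv)_{Kw}:kv]_i$ using that an element purely inseparable over $kv$ but contained in the separable residue extension $(Kk')W\,|\,Kw$ must already lie in $Kw$. The only real difference is the preliminary reduction: the paper simply replaces $A$ by its henselization (so that $v$ has a unique extension $v'$ to $k'$ and the residue extension $k'v'|kv$ is normal), whereas you pass to the inertia field $k^T$ to force $k'w'|kv$ to be purely inseparable. Both devices serve the same purpose---ensuring that $[k'w':kv]_i$ equals the degree of the maximal purely inseparable subextension so that the separable/inseparable trick applies---but the henselization is less work, since it sidesteps the bookkeeping you flag at the end about preserving $w(\pi)$ and $[(kv)_{Kw}:kv]_i$ under the base change.
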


\begin{proof} Without loss of generality we may assume that $A$ is henselian. Let $v'$ be the unique extension of $v$ to $k'$. Then $\# T_v(k'|k)=e_v(k'|k)[k'v':kv]_i$. Let $B\subset K$ be the valuation ring of $w$ and let $B'$ be the integral closure of $B$ in $Kk'$. Then $B'$ is a semi-local principal ideal domain. Let $\m$ be the maximal ideal of $B$ and let $\m_1'\ldots, \m_g'$ be the maximal ideals of $B'$. By assumption, $B\hookrightarrow B'$ is \'{e}tale, hence $\m B'= \m_1'\cdots \m_g'$. This implies
\[
\pi B' = \m^{w(\pi)} B'= (\m_1' \cdots \m_g')^{w(\pi)},
\]
But $\pi B'= (\pi')^{e_v(k'|k)}B'$, where $\pi'\in k'$ is a uniformizer of $v'$, and so we obtain $e_v(k'|k)\mid w(\pi)$.

Furthermore, for all $i$ the extension $(B'/\m'_i) | (B/\m)=Kw$ is separable by assumption. As $B'/\m'_i$ contains $k'v'$, we conclude that
\[
[k'v':kv]_i \mid [(kv)_{Kw} : kv].
\]
Summing up, the result follows.
\end{proof}

\begin{proof}[Proof of Theorem~\ref{Katz-Lang}] In the proof we will make frequent use of the following well-known fact: Let $X$ be a normal, connected locally noetherian scheme with generic point $\eta$ and function field $K$. Let $K^\sep$ be a separable closure of $K$ and let $\bar \eta$ be the corresponding geometric point of $X$. Then $\pi_1(X,\bar \eta)$ is a quotient of the Galois group $G(K^\sep|K)$ and the functor ``fibre over $\eta$''
\[
\{\text{connected \'{e}tale coverings of $X$}\} \lang \{\text{finite separable extensions of $K$}\}
\]
is fully faithful, with image those finite separable extensions $K'|K$ for which the normalization of $X$ in $K'$ is \'{e}tale over $X$. With the obvious modifications, the same also holds for the tame fundamental group.

\medskip\noindent
{\em First step:}
Reduction to $S=X=\Spec (K)$.

\smallskip\noindent
Let $\eta\in X$ be the generic point and let $K:=k(\eta)$ be its function field. To accomplish this step it clearly suffices to prove the following claim:

\smallskip
\noindent {\bf Claim:} {\it The natural map $\ker_\eta^t(Y_\eta / \eta ) \to \ker_S^t (Y/X) $ has finite cokernel.}

\smallskip\noindent
{\it Proof of the Claim:} Let $L$ be the function field of $Y$. We choose a separable closure $L^\sep$ of $L$ and denote the  separable closure of $K$ in $L$ by $K^{\sep}$. By assumption, $L$ and $K^\sep$ are linearly disjoint extensions of $K$. Therefore the natural map
$\pi_1^{t,\ab}(Y_{\eta}/\eta ) \to G (K^\sep|K)^{\ab}$ is surjective. The exact sequence
\[
1\to \pi_1^{t}(Y_{\bar{\eta}}/\bar{\eta}) \to \pi_1^{t}(Y_{\eta}/\eta )\to G (K^\sep|K) \to 1
\]
implies the exact sequence
\[
\pi_1^{t,\ab}(Y_{\bar{\eta}}/\bar{\eta})_{G(K^\sep|K)} \to \pi_1^{t,\ab}(Y_{\eta}/\eta )\to G (K^\sep|K)^\ab \to 0 \leqno (*)
\]
Therefore $\ker^t_\eta(Y_\eta/\eta)$ is a quotient of $\pi_1^{t,\ab}(Y_{\bar{\eta}}/\bar{\eta})_{G(K^\sep|K)}$.
Consider the commutative diagram
\[
\xymatrix{
\pi_1^{t,\ab}(Y_{\bar{\eta}}/\bar{\eta}) \ar[r] \ar@{=}[d] & \pi_1^{t,\ab}(Y_{\eta}/\eta ) \ar@{>>}[r] \ar@{>>}[d] & G (K^\sep|K)^{\ab}  \ar@{>>}[d]\\
\pi_1^{t,\ab}(Y_{\bar{\eta}}/\bar{\eta} )  \ar[r] &  \pi_1^{t,\ab}(Y/S ) \ar@{>>}[r] &  \pi_1^{t,\ab} ( X/S).
}
\]
The upper row is exact and the lower row is a complex. Denoting the cohomology of the lower row by $H$, we obtain an exact sequence
\[
\ker_\eta^t(Y_\eta / \eta ) \to \ker_S^t (Y/X) \to H.
\]
It therefore remains to show that $H$ is finite. In Galois terms, $H=G(K_2|K_1)$, where

\medskip
\begin{compactitem}
\item $K_1|K$ is the maximal abelian extension of $K$ such that the normalization of $X$ in $K_1$ is tame over $X$.
\item $K_2|K$ is the maximal abelian extension of $K$ such that the normalization of $Y$ in $LK_2$ is tame over $Y$.
\end{compactitem}

\medskip\noindent
We consider the cases (i) and (ii) separately.

\medskip\noindent
(i) Let $\bar X$ be a regular compactification of $X$ over $S$ such that $\bar X\sm X$ is a normal crossing divisor. Let $v_1,\ldots,v_n$ be the discrete rank one valuations of $K$ associated to the irreducible components of $\bar X\sm X$. Then, by Theorem~\ref{triangle-theo},  $H=G(K_2|K_1)$ is generated by the ramification groups of $v_1,\ldots,v_n$ in $G(K_2|K)$, and so it suffices to show that these groups are finite. Let $v$ be one of the $v_i$ and let $p$ be the residue characteristic. If $p=0$, there is nothing to prove, so assume $p>0$. Let $K_2(p)$ be the maximal $p$-subextension of $K$ in $K_2$. We have to show that the inertia group of $v$ in $K_2(p)|K$ is finite. Factor $Y\to \bar X$ into an open immersion $Y\subset \bar{Y}$ with dense image and normal $\bar Y$ and a proper morphism $\bar{Y} \to \bar{X}$, which is possible for example by~\cite{Lue}. We find a codimension one point on $\bar{Y}$ whose associated discrete valuation $w$ of $L$ extends $v$. Since the \'etale covering of $Y$ associated to $LK_2(p)|L$ is divisor-tame, we see that $w$ is unramified in $LK_2(p)|L$. Therefore $T_v(K_2(p)|K)$ is finite by Lemma~\ref{aufess}.

\medskip\noindent
(ii) As the generic fibre of $f$ has a rational point, there exists  a section $s: X_0\to Y$ to $f: Y\to X$ over some dense open subscheme $X_0\subset X$. If $X'\to X$ is an \'{e}tale covering such that $X'\times_X Y\to Y$ is tame, then, using the section $s$ and the base change property of (curve-)tame coverings, we see that $X'\times_X X_0\to X_0$ is tame. Taking the point of view of divisor-tameness, we conclude that $X'\to X$ is tame. Hence $H=0$ in this case.

\medskip\noindent
This completes the proof of the claim and of the first step. If $\text{char}(K)=0$, we are ready, because in this case
\[
\ker^t_\eta(Y/\eta)= \ker(Y/\eta)
\]
is finite by \cite{K-L}, Theorem 1. So we may assume that $K$ has characteristic~$p>0$.

\medskip\noindent
{\em Second step:} Reduction to the case that $f:Y\to X$ is an elementary fibration which admits a section $s: X\to Y$.

\smallskip
\noindent
Suppose we are in the situation achieved by the first step, i.e.~$X=S=\Spec(K)$. We claim that it suffices to prove the finiteness of $\ker_K^t(Y/K)$ after a finite base change $K\subset K'$. In fact, using the exact sequence $(*)$ of the first step, we see that the map $\ker_{K'}^t(Y\times_K K'/K')\to \ker_K^t(Y/K)$ is surjective, and this reduces us to show the finiteness of $\ker_{K'}^t(Y\times_K K'/K')$.

Let $Y'\to Y$ be \'{e}tale and let $K'$ be the algebraic closure of $K$ in $k(Y')$. We claim that we may replace $Y$ by $Y'$ and $K$ by $K'$. First note that $Y'\to Y$ factors through $Y'\to Y\times_K K'$, and so, by the last reduction, we may suppose that $K=K'$. Since the homomorphism $\pi_1^{\ab}(Y') \to \pi_1^{\ab}(Y)$ has finite cokernel, the same is true for $\pi_1^{t,\ab}(Y'/K) \to \pi_1^{t,\ab}(Y/K)$ and also for $\ker_K^t(Y'/K)\to \ker_K^t(Y/K)$. This shows the claim.

Recall that an elementary fibration is a complement of a finite \'{e}tale divisor in a proper and smooth relative curve. By \cite[Exp.~XI, 3.1]{sga4}, after replacing $Y$ by an \'{e}tale open, we can factor $f$ into a sequence of elementary fibrations
\[
Y=V_n \stackrel{f_n}{\lang} V_{n-1} \stackrel{f_{n-1}}{\lang} \cdots \stackrel{f_1}{\lang} V_0=X.
\]
Moreover, since a surjective, smooth morphism admits a section over an \'{e}tale open (\cite{ega4}, IV, 17.16.3), we can achieve the existence of sections $s_i:V_{i-1} \to V_{i}$ to $f_i: V_i \to V_{i-1}$ for all $i$.
Because the sequences
\[
0 \longrightarrow \ker_K^t(V_i/V_{i-1})  \longrightarrow \ker_K^t(V_i/K) \longrightarrow \ker_K^t(V_{i-1}/K) \longrightarrow 0
\]
are exact, it suffices to show the theorem for each elementary fibration in our sequence separately.

\medskip\noindent
{\it Last step:} Completion of the proof.

\medskip
\noindent
After the second step we are reduced to the case in which $f:Y\to X$ is an elementary fibration which has a section. A further application of the procedure in the first step allows us to assume that again $S=X=Spec(K)$, where $K$ is an absolutely finitely generated field $K$ of characteristic $p>0$. Let $\bar{Y}$ be the (uniquely defined) smooth compactification of the smooth curve $Y$ over $K$.  We have to show the finiteness of prime-to-$p$ and of the $p$-primary part of $\ker_K^t(Y/K)$.  The first is isomorphic to the prime-to-$p$ part of $\ker(Y/K)$. The latter is isomorphic to the $p$-part of $\ker(\bar{Y}/K)$. Therefore the proof of our theorem is finally reduced to~\cite[Theorems 1 and 2]{K-L}.
\end{proof}

Using Theorem~\ref{Katz-Lang}, one immediately deduces the following finiteness theorems for the tame fundamental group of arithmetic schemes, which had been shown previously in \cite{S-sing}.

\begin{theorem} \label{finite-tame-flat}
If\/  $X\in \Sch(\Z)$ is regular, connected and  flat over $\Spec(\Z)$, then the abelianized tame fundamental group $\pi_1^{t,\ab}(X/\Spec(\Z))$ is finite.
\end{theorem}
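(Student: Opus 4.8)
The plan is to deduce the statement from Theorem~\ref{Katz-Lang} by fibering $X$ over a one-dimensional base over $\Z$; the base case is then a finiteness theorem of class field theory.

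First I would reduce to a dense open subscheme. If $U\subseteq X$ is dense open, then $U$ is again regular, connected and flat over $\Z$, the inclusion $U\to X$ is a morphism in $\Sch(\Z)$ of regular connected schemes, and, $X$ being normal, the map $\pi_1(U)\to\pi_1(X)$ is surjective; passing to tame quotients and abelianizing yields a surjection $\pi_1^{t,\ab}(U/\Spec\Z)\twoheadrightarrow\pi_1^{t,\ab}(X/\Spec\Z)$. Hence it suffices to prove finiteness for a suitable dense open subscheme of $X$, and we shall shrink $X$ freely in what follows. Put $K=k(X)$; since $X$ is flat over $\Z$ this is a finitely generated field of characteristic $0$. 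Let $k$ be the algebraic closure of $\Q$ in $K$, which is a number field, so that $K|k$ is a regular field extension. After shrinking $X$, the structure morphism $X\to\Spec\Z$ factors as $f\colon X\to X_0$ with $X_0$ a dense open subscheme of $\Spec(\O_k)$, and — using that the generic fibre of $f$ is regular, hence smooth over $k$, that smoothness and surjectivity spread out over a dense open of the base, and that $K|k$ is regular (so that the generic fibre of $f$ is even geometrically integral) — we may arrange that $f$ is smooth and surjective with geometrically connected generic fibre. Now $X_0$ is a regular connected curve in $\Sch(\Z)$ whose unique proper regular model is $\Spec(\O_k)$, and $\Spec(\O_k)\sm X_0$ is a finite set of closed points, hence a normal crossing divisor; therefore condition~{\rm(i)} of Theorem~\ref{Katz-Lang} holds for $f$, and that theorem gives that $\ker_{\Spec\Z}^t(X/X_0)=\ker\big(\pi_1^{t,\ab}(X/\Spec\Z)\to\pi_1^{t,\ab}(X_0/\Spec\Z)\big)$ is finite.

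It remains to feed in two further facts. Since $f$ has geometrically connected generic fibre and $X_0$ is normal, every connected \'{e}tale covering $X_0'\to X_0$, whose function field $k'$ is a finite extension of $k(X_0)=k$, pulls back to a covering of $X$ with function field $K\otimes_k k'$, a domain finite over the field $K$ and hence a field; so the pullback is connected, $\pi_1(X)\to\pi_1(X_0)$ is surjective, and therefore so is $\pi_1^{t,\ab}(X/\Spec\Z)\to\pi_1^{t,\ab}(X_0/\Spec\Z)$. On the other hand $\pi_1^{t,\ab}(X_0/\Spec\Z)$ is finite: abelian \'{e}tale coverings of $X_0$ that are tame along $\Spec(\O_k)\sm X_0$ correspond to abelian extensions of $k$ unramified outside the finitely many missing primes and, being abelian and tamely ramified, of squarefree conductor there, so $\pi_1^{t,\ab}(X_0/\Spec\Z)$ is a quotient of a ray class group of $k$ and thus finite. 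Consequently $\pi_1^{t,\ab}(X/\Spec\Z)$ is an extension of a finite group by the finite group $\ker_{\Spec\Z}^t(X/X_0)$, hence finite; by the reduction to a dense open this also gives the assertion for the original scheme.

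The only substantive external ingredient is the finiteness of ray class numbers used in the base case; the real technical point is the spreading-out argument in the second paragraph — producing the smooth surjective fibration $f$ over an open subscheme of $\Spec(\O_k)$ with geometrically connected generic fibre — after which everything is formal.
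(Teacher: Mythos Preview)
Your proof is correct and follows essentially the same approach as the paper: reduce to a dense open, factor through an open $X_0\subset\Spec(\O_k)$ where $k$ is the algebraic closure of $\Q$ in $k(X)$, apply Theorem~\ref{Katz-Lang} to the resulting smooth surjective morphism $X\to X_0$ (using that $\Spec(\O_k)\sm X_0$ is trivially a NCD), and finish with the finiteness of the ray class group for the base. The paper is somewhat terser---it does not spell out the surjectivity of $\pi_1^{t,\ab}(X/\Spec\Z)\to\pi_1^{t,\ab}(X_0/\Spec\Z)$ or the spreading-out argument---but the structure is the same.
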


\begin{proof} For a dense open subscheme $X_0\subset X$, the homomorphism
\[
\pi_1^{t,\ab}(X_0/\Spec(\Z))  \lang \pi_1^{t,\ab}(X/\Spec(\Z))
\]
is surjective. Therefore, making $X$ smaller, we may assume that the structural morphism $X\to \Spec(\Z)$ is smooth.
Let $k$ be the algebraic closure of $\mathbb{Q}$ in $k(X)$. Then $X\to \Spec(\Z)$ factors through $\Spec(\O_k)$. Denoting the (open) image of $X$ in $\Spec(\O_k)$ by $U$, the morphism $X\to U$ is smooth and surjective with geometrically connected generic fibre. Now consider the exact sequence
\[
0\to \ker^t_{\Spec(\Z)} (X/U) \to \pi_1^{t,\ab}(X/\Spec(\Z)) \to \pi_1^{t,\ab}(U/\Spec(\Z))\to 0.
\]
The left hand group is finite by Theorem~\ref{Katz-Lang}. The group $\pi_1^{t,\ab}(U/\Spec(\Z))$ is finite by class field theory: it is the Galois group of the ray class field with modulus $\prod_{\p\notin U}\p$ of the number field $k$. This finishes the proof.
\end{proof}

If $X$ is a smooth, connected variety over a finite field $\F$, then $\pi_1^{t,\ab}(X/\Spec(\Z))=\pi_1^{t,\ab}(X/\Spec(\F))$, and we have the degree map
\[
\deg: \pi_1^{t,\ab}(X/\Spec(\F))\lang \pi_1(\F)\cong \hat \Z.
\]
The degree map has an open image, which  corresponds to the field of constants of $X$, i.e.\ the algebraic closure of $\F$ in $k(X)$.
In this situation, Theorem~\ref{Katz-Lang} reads as

\begin{theorem}\label{finite-tame-var}
Let $X$ be a smooth, connected variety over a finite field $\F$.
Then $\ker(\deg)$ is finite. In particular,
\[
\pi_1^{t,\ab}(X)\cong \hat \Z \oplus (\text{finite group}).
\]
\end{theorem}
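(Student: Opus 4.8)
\emph{Strategy.} Essentially everything will come from Theorem~\ref{Katz-Lang}: the plan is to recognise $\ker(\deg)$ as one of the kernels $\ker^t_S(-/-)$ occurring there, to reduce to a situation where hypothesis~(ii) of that theorem is available, and then to read off the direct sum decomposition by an elementary argument about profinite abelian groups. For the identification, let $\F'$ be the field of constants of $X$, i.e.\ the algebraic closure of $\F$ in $k(X)$. Then $X$ is smooth, connected, surjective and geometrically connected over $\F'$, and the structure morphism $f\colon X\to\Spec(\F')$ is a morphism in $\Sch(\F)$ between connected regular schemes. Since $\Spec(\F')$ is proper over $\F$ we have $\pi_1^{t,\ab}(\Spec(\F')/\Spec(\F))=\pi_1(\Spec(\F'))\cong\hat\Z$, and the inclusion $\pi_1(\Spec(\F'))\hookrightarrow\pi_1(\Spec(\F))\cong\hat\Z$ is injective with image the open subgroup belonging to the field of constants; composing with it turns the map $\pi_1^{t,\ab}(X)=\pi_1^{t,\ab}(X/\Spec(\F))\to\pi_1^{t,\ab}(\Spec(\F')/\Spec(\F))$ induced by $f$ into the degree map. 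Hence
\[
\ker(\deg)=\ker\big(\pi_1^{t,\ab}(X/\Spec(\F))\to\pi_1^{t,\ab}(\Spec(\F')/\Spec(\F))\big)=\ker^t_{\Spec(\F)}\!\big(X/\Spec(\F')\big),
\]
and it suffices to show that this last group is finite. We want to apply Theorem~\ref{Katz-Lang} to $f\colon X\to\Spec(\F')$ over $S=\Spec(\F)$ (whose function field $\F$ is absolutely finitely generated); the generic fibre of $f$ is $X$ viewed over $\F'$, which is geometrically connected by the choice of $\F'$.

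\emph{The main obstacle, and its removal.} The one delicate point is the rational-point hypothesis: a smooth geometrically connected variety over a finite field need not have a point over its field of constants, so hypothesis~(ii) of Theorem~\ref{Katz-Lang} is not automatic, and hypothesis~(i) is unavailable in general. I would remove this difficulty by a finite base change, in the spirit of the reduction steps of the proof of Theorem~\ref{Katz-Lang}. Choose a closed point $x_0\in X$ and put $\F''=k(x_0)$, a finite extension of $\F'$, and $X''=X\times_{\F'}\Spec(\F'')$. Then $X''$ is smooth and, because $X$ is geometrically connected over $\F'$, connected; its field of constants is $\F''$; it carries the $\F''$-rational point determined by $x_0$; and $X''\to X$ is a connected finite \'etale covering. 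The degree maps of $X''$ and of $X$ are intertwined by the homomorphism $\pi_1^{t,\ab}(X''/\Spec(\F))\to\pi_1^{t,\ab}(X/\Spec(\F))$ (both factor through $\pi_1(\Spec(\F))$), and this homomorphism has finite cokernel since $X''\to X$ is finite \'etale; consequently it carries $\ker(\deg_{X''})$ onto $\ker(\deg_X)\cap\operatorname{im}\big(\pi_1^{t,\ab}(X'')\to\pi_1^{t,\ab}(X)\big)$, which has finite index in $\ker(\deg_X)$. Thus $\ker(\deg_X)$ has a finite-index subgroup that is a homomorphic image of $\ker(\deg_{X''})$, so it is enough to prove the theorem for $X''$. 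As $X''$ satisfies hypothesis~(ii) of Theorem~\ref{Katz-Lang}, we conclude that $\ker(\deg)$ is finite.

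\emph{From finiteness to the decomposition.} It remains to deduce $\pi_1^{t,\ab}(X)\cong\hat\Z\oplus(\text{finite group})$. We have an exact sequence $0\to\ker(\deg)\to\pi_1^{t,\ab}(X)\to I\to 0$ of profinite abelian groups, where $I$ is the image of $\deg$, an open subgroup of $\pi_1(\F)\cong\hat\Z$ and hence itself isomorphic to $\hat\Z$. Lift a topological generator of $I$ to an element $g\in\pi_1^{t,\ab}(X)$ and let $P$ be the closed subgroup it generates. Then $P\to I$ is surjective with kernel contained in the finite group $\ker(\deg)$; a procyclic profinite group which has $\hat\Z$ as a quotient modulo a finite subgroup must already be isomorphic to $\hat\Z$, so $P\cong\hat\Z$ and $P\to I$ is an isomorphism. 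Therefore $\ker(\deg)\cap P=0$ and $\ker(\deg)+P=\pi_1^{t,\ab}(X)$, giving $\pi_1^{t,\ab}(X)=\ker(\deg)\oplus P\cong(\text{finite group})\oplus\hat\Z$; since $\pi_1^{t,\ab}(X/\Spec(\Z))=\pi_1^{t,\ab}(X/\Spec(\F))$, this is the assertion of the theorem.
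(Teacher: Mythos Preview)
Your proof is correct, but the entire ``main obstacle'' paragraph rests on a misreading of Theorem~\ref{Katz-Lang}: the compactification hypothesis~(i) there concerns the \emph{target} of $f$, which in your setup is $\Spec(\F')$, not the variety~$X$. Since $\Spec(\F')$ is already proper over $S=\Spec(\F)$ (and regular, zero-dimensional), it is its own regular compactification with empty boundary, and an empty divisor is trivially a NCD. Hence hypothesis~(i) is satisfied automatically, and Theorem~\ref{Katz-Lang} applies directly to $f\colon X\to\Spec(\F')$ without any base change. This is precisely why the paper gives no separate proof: it simply says that Theorem~\ref{Katz-Lang} ``reads as'' the present statement. (Compare also the Remark after Theorem~\ref{Katz-Lang}, or the parallel proof of Theorem~\ref{finite-tame-flat}, where the base~$U$ is one-dimensional.)

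Your identification of $\ker(\deg)$ with $\ker^t_{\Spec(\F)}(X/\Spec(\F'))$ and your splitting argument at the end are correct and are exactly what is needed to unpack the paper's terse assertion. The base-change trick you devised is not wrong---it does work, and the cokernel argument via the finite \'etale (hence tame, being pulled back from $\Spec(\F'')\to\Spec(\F')$) covering $X''\to X$ is sound---but it is a detour around an obstacle that is not there.
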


\appendix

\section{Examples}

{\bf Example 1:} {\em We give an example which shows that the notion of tame ramification along a divisor on a regular scheme is not stable under base change if the divisor does not have normal crossings. This example is taken from \cite{S-tame}, Example 1.3.}

\medskip
Let $X=\Spec(\Z[T])$ be the affine line over $\Spec(\Z)$ and consider the divisor
\[
D= \text{div} (T+4) + \text{div}(T-4),
\]
which is not a normal crossing divisor. Let $K=\Q(T)$ be the function field of $X$ and
$U=X-D$. Put $f=(T+4)(T-4)=T^2-16$, $L=K(\sqrt{f})$ and consider the normalization $X_L$ of
$X$ in $L$. The ramification locus of $X_L \to X$ is either $D$ or $D \cup X_2$, where $X_2$
is the unique vertical divisor on $X$ over characteristic~$2$. Let us show that $X_L \to X$
is unramified at the generic point of $X_2$. This is equivalent to the statement that $L|K$
is unramified at the unique discrete valuation $v_2$ of $K$ which corresponds to the prime
ideal $2\Z[T]\subset \Z[T]$. Therefore it suffices to show that $f$ is a square in the
completion $K_2$ of $K$ with respect to $v_2$. Consider the polynomial $F(X)=X^2
-f=X^2-T^2+16$. We have  $F(T)\equiv 0 \bmod 16$ and the derivative $F'(T)= 2 T$ has the
exact $2$-valuation $1$. By the usual approximation process (cf.\ \cite{Ser-loc} 2.2.\ Theorem~1), we
see that $f$ has a square root in $K_2$.  Hence the ramification locus of $X_L \to X $ is
exactly $D$, and since $D$ is the sum of horizontal prime divisors, the morphism $U_L \to U$
is tamely ramified along $D$.

Now consider the closed subscheme $Y \subset X$ given by the equation \hbox{$T=0$}, so $Y
\cong \Spec(\Z)$. Then $D_Y= D \cap Y$ is the point  on $Y$ which corresponds to the prime
number $2$. Let $V= U \cap Y= Y -D_Y$. The base change $V'=U_L \times_U V \to V$ is the
normalization of $V\cong \Spec(\Z[\frac{1}{2}])$ in  $\Q(\sqrt{-1})$.  But $2$ is wildly
ramified in $\Q(\sqrt{-1})$, and so $V' \to V$ is not tamely ramified along  $D_Y$.

\bigskip\noindent
{\bf Example 2:} {\em We construct an example of a cyclic \'{e}tale covering of smooth varieties which is curve-tame but not numerically tame with respect to some normal but not regular compactification. Furthermore we give an example of a curve-tame Galois covering (with non-nilpotent Galois group) which is numerically tame with respect to some regular compactification but not numerically tame with respect to another regular compactification.}

\medskip
Let $k=\bar k$ be an algebraically closed field of characteristic $p>2$. Let $E$ be an ordinary elliptic curve over $k$ and let $y^2=x(x-1)(x-\lambda)$ be a Weierstra{\ss} equation for $E$. We obtain a cyclic degree~$2$ covering $\pi: E\to \Pe^1_k$ and we have
\[
E\sm \{\infty\}= \Spec\big(k[x,y]/y^2-x(x-1)(x-\lambda)\big) = \pi^{-1}\big(\A^1_k=\Spec(k[x])\big).
\]
Let $E'\to E$ be the unique cyclic \'{e}tale covering of degree~$p$. Because of uniqueness, $E'\to \Pe^1_k$ is a (tamely ramified) Galois covering of degree~$2p$. If $\Gal(E'|\Pe^1_k)$ would be abelian, we would obtain a cyclic covering of degree~$p$ of $\Pe^1_k$ which is tame, hence \'{e}tale because $p=\text{char}(k)$.  As $\Pe^1_k$ does not have nontrivial \'{e}tale coverings, the Galois group $\Gal(E'|\Pe^1_k)$ is nonabelian, hence isomorphic to the dihedral group $D_{2p}$ (which is not nilpotent).

Let $X=\pi^{-1}(\A^1_k \sm \{0,1,\lambda\})$ and let $X'$ be the preimage of $X$ in $E'$. Taking the product with $\A^1_k$, we obtain an \'{e}tale Galois covering with Galois group $D_{2p}$
\[
\phi: X'\times \A^1_k \xrightarrow{\text{\'{e}tale degree }p}  X'\times \A^1_k \xrightarrow{\text{\'{e}tale degree }2} (\A^1_k \sm \{ 0,1,\lambda\}) \times \A^1_k.
\]
The covering $\phi$ is numerically tame with respect to the compactification
\[
\bar \phi: E'\times \Pe^1_k \xrightarrow{\text{\'{e}tale degree }p} E\times \Pe^1_k \xrightarrow{\text{degree }2} \Pe^1_k \times \Pe^1_k,
\]
in particular, $\phi$ is curve-tame.
We will show that $\phi$ is not numerically tame with respect to another regular compactification of $(\A^1_k \sm \{ 0,1,\lambda\}) \times \A^1_k$:

Consider the embedding
\[
(\A^1_k \sm \{ 0,1,\lambda\})\times \A^1_k  \hookrightarrow \Pe^2_k,\ (x,t)\mapsto (x:t:1),
\]
and denote the normalizations of $\Pe^2_k$ in the function fields of $X\times \A^1_k$ and $X'\times \A^1_k$ by $Y$ and $Y'$, respectively.

We claim that the curve-tame cyclic covering $X'\times \A^1_k \to X\times \A^1_k$ is not numerically tame with respect to the normal compactification $X\times \A^1_k \hookrightarrow Y$. From this it follows that $\phi: X'\times \A^1_k \to (\A^1_k \sm \{ 0,1,\lambda\})\times \A^1_k$ is not numerically tame with respect to the regular compactification $(\A^1_k \sm \{ 0,1,\lambda\})\times \A^1_k \hookrightarrow \Pe^2_k$.

\medskip
In order to show the claim, it suffices to prove that there is exactly one point in $Y'$ over $P=(0:1:0)\in \Pe^2_k$. This can be seen as follows: $P$ corresponds to the maximal ideal $\m=(t^{-1}x,t^{-1})\subset k[t^{-1}x,t^{-1}]$. This ring is contained in $k[x,t^{-1}]$ and $\m$ is the preimage of the principal prime ideal $\p=(t^{-1})\subset k[x,t^{-1}]$. It therefore suffices to show that there is exactly one prime ideal above $\p$ in the integral closure of $k[x,t^{-1}]$ in $K'$. But this is easily seen: setting $E'\sm {\pi'}^{-1}(\infty)=\Spec(A')$, where $\pi': E'\to \Pe^1_k$ is the projection, the integral closure of $k[x,t^{-1}]$ in $K'$ is just $A'[t^{-1}]$.

\bigskip\noindent
{\bf Example 3:} {\em We construct a similar example as in Example 2, but with arithmetic surfaces instead of varieties. This example assumes some familiarity with S.~Saito's class field theory for curves over local fields, see \cite{Saito-local}.}

\bigskip
Let $p\neq 2$ be a prime number, $k|\Q_p$ a $p$-adic field, $E\to k$ an elliptic curve and  $E\to \Pe^1_k$ the degree 2 covering defined by a Weierstra{\ss} model. Then the normalization $\mathcal{W}$ of $\Pe^1_{\O_k}$ in $k(E)$ is a normal model of $E$ over $\Spec(\O_k)$.  Let $\mathcal{E}\to \mathcal{W}$ be a minimal resolution such that $\mathcal{E}$ is a regular model of $E$ and such that the reduced special fibre $(\mathcal{E}_s)_\red$ of $\mathcal{E}$ is a normal crossing divisor. Recall that the dual graph $\varGamma$ of $(\mathcal{E}_s)_\red$ is defined as follows:
the vertices correspond to the irreducible components and an edge connecting two vertices corresponds
to an intersection point of the two components.

Let $E'\to E$ be the maximal \'{e}tale elementary-abelian $p$-covering of $E$ in which all closed points of $E$ split completely. By \cite{Saito-local}, Proposition 2.2, it extends to an \'{e}tale covering $\mathcal{E'}\to \mathcal{E}$, hence $E'\to E$ is numerically tame with respect to the regular compactification $E\subset \mathcal{E}$. Furthermore, by loc.cit.\ Proposition 2.3 and Theorem 2.4, the covering $E'\to E$ is finite and there is a natural isomorphism $G(E'|E)\cong \pi_1^\ab(\varGamma)/p$.

Now assume that there exists a (singular) rational point $P$ in the special fibre $\mathcal{W}_s$ of $\mathcal{W}$ such that the subgraph $\varGamma_P$ of $\varGamma$ given by the components over $P$ is not contractible. Then not every elementary-abelian $p$-covering of $\varGamma$ splits over $\varGamma_P$, hence $\mathcal{E'}\to\mathcal{E}$ cannot come by base change from an \'{e}tale covering of $\mathcal{W'}\to\mathcal{W}$. We conclude that $E'\to E$ is not numerically tame with respect to the normal compactification $E\subset \mathcal{W}$. Furthermore, considering the composite map $E'\to E\to \Pe^1_k$, we obtain an example of a curve-tame but not numerically tame covering of some open subscheme $U\subset \Pe^1_k$.
This example can be ``lifted'' to obtain a similar example over  the ring of integers of a number field.

\vskip1.5cm
NWF I-Mathematik, Universit\"{a}t Regensburg, D-93040 Regensburg, Deutschland. {\it E-mail address:} {\tt moritz.kerz@mathematik.uni-regensburg.de}

\medskip
NWF I-Mathematik, Universit\"{a}t Regensburg, D-93040 Regensburg, Deutschland. {\it E-mail address:} {\tt alexander.schmidt@mathematik.uni-regensburg.de}

\end{document}